\documentclass[a4paper,11pt]{amsart}

\usepackage{amsmath,amssymb,amsfonts,amsthm}

\usepackage{graphicx}
\usepackage{tikz}
\usetikzlibrary{arrows}

\usepackage{enumerate}
\usepackage[shortlabels]{enumitem}

\usepackage{xcolor}
\usepackage{hyperref}

\usepackage{float}
\usepackage{ulem}
\usepackage{csquotes}

\usepackage[
left=2.22cm,
right=2.22cm,
top=2.9cm,
bottom=2.9cm
]{geometry}

\numberwithin{equation}{section}

\newtheorem{assumption}{Assumption}[section]
\newtheorem{theorem}{Theorem}[section]

\newtheorem{proposition}[theorem]{Proposition}
\newtheorem{corollary}[theorem]{Corollary}
\newtheorem{remark}[theorem]{Remark}

\theoremstyle{definition}
\newtheorem{definition}{Definition}[section]

\hypersetup{
	colorlinks=true,
	linkcolor=blue,
	citecolor=blue,
	urlcolor=blue
}

\title[Quintic Wave with Nonlocal Kelvin--Voigt Damping]{Attractors and Singular Limits for a Quintic Wave Equation with
	Nonlocal Kelvin--Voigt Damping}

\author[Yue Sun]{Yue Sun$^{\star}$}
\thanks{$^{\star}$ Research partially supported by the Natural Science Foundation of Jiangsu Province, Grant No.~BK20240532.}
\address[Yue Sun]{School of Physical and Mathematical Sciences, Nanjing Tech University, Nanjing 211816, China}
\email{S\_yyue@163.com}

\author[Marcelo M. Cavalcanti]{Marcelo M. Cavalcanti$^{\ddagger}$}
\thanks{$^{\ddagger}$ Research partially supported by CNPq Grant No.~300631/2003-0.}
\address[Marcelo M. Cavalcanti]{State University of Maring\'a, Maring\'a, PR 87020-900, Brazil}
\email{mmcavalcanti@uem.br}

\author[Vando Narciso]{Vando Narciso$^{\dagger}$}
\thanks{$^{\dagger}$ Research partially supported by Fundect/CNPq Grant No.~15/2024.}
\thanks{Corresponding author.}
\address[Vando Narciso]{Universidade Estadual de Mato Grosso do Sul, Dourados 79804-970, MS, Brazil}
\email{vnarciso@uems.br}

\date{\today}

\begin{document}
	
	\begin{abstract}
		In this article, we consider an energy-critical quintic wave equation on a bounded domain $\Omega\subset\mathbb{R}^3$ with nonlinear and nonlocal Kelvin--Voigt damping of the form $-\|\nabla u_t\|_{L^2(\Omega)}^\alpha\Delta u_t$, where $\alpha\in\mathbb{R}_+=[0,\infty)$. Under suitable hypotheses on the quintic source term, we establish the well-posedness of the problem and investigate its long-time dynamics in the natural energy space $\mathcal H=H_0^1(\Omega)\times L^2(\Omega)$. For every $\alpha\in\mathbb{R}_+$, we show that the associated dynamical system $(\mathcal H,S^\alpha(t))$ is gradient and dissipative, and we prove a stabilization estimate that yields asymptotic smoothness and, consequently, the existence of a compact global attractor $\mathcal A_\alpha$. The same estimate provides an upper bound for the Kolmogorov $\varepsilon$-entropy of $\mathcal A_\alpha$ and, in the limiting case $\alpha=0$, reduces to a quasi-stability inequality, which implies that $\mathcal A_0$ has finite fractal dimension. Furthermore, we prove that the family $\{\mathcal A_\alpha\}_{\alpha\in\mathbb{R}_+}$ is uniformly bounded in the higher-regularity space $\mathcal H_1=(H^2(\Omega)\cap H_0^1(\Omega))\times H_0^1(\Omega)$. Finally, we establish the upper semicontinuity of $\{\mathcal A_\alpha\}_{\alpha\in\mathbb{R}_+}$ at $\alpha=0$, showing that the attractors associated with the nonlinear and nonlocal Kelvin--Voigt damping converge to the global attractor of the limiting problem with classical linear Kelvin--Voigt damping.

		\vskip 0.1in
		\noindent
		\textit{Mathematics Subject Classification (2020):}
		35B41, 35B40, 35L05, 35L70, 37L30.
		
		\noindent
		\textit{Keywords:}
		Energy-critical quintic wave equation; nonlinear nonlocal Kelvin--Voigt damping; global attractor; long-time dynamics.
		
	\end{abstract}	
	\maketitle	
	\section{Introduction}
	\subsection{Model Description and Main Results}
	
	Let $\Omega\subset\mathbb{R}^3$ be a bounded domain with smooth boundary
	$\Gamma=\partial\Omega$. In this paper, we investigate the following
	energy-critical quintic wave equation with nonlinear nonlocal
	Kelvin--Voigt damping:
	\begin{equation}\label{P}
		\left\{
		\begin{aligned}
			&u_{tt}-\Delta u-\|\nabla u_t(t)\|^{\alpha}\Delta u_t+f(u)=h,
			&&\text{in } \Omega\times\mathbb{R}_{+},\\
			&u=0,
			&&\text{on } \Gamma\times\mathbb{R}_{+},\\
			&u(\cdot,0)=u_0,\qquad
			u_t(\cdot,0)=u_1,
			&&\text{in } \Omega.
		\end{aligned}
		\right.
	\end{equation}
	Here, $\alpha\in \mathbb{R}_+$ is a prescribed parameter, $h$ is a given external force,
	and $\|\cdot\|$ denotes the norm in $L^2(\Omega)$. The nonlinearity
	$f:\mathbb{R}\to\mathbb{R}$ is assumed to satisfy the standard
	energy-critical quintic growth conditions. Problem \eqref{P} combines two challenging features, namely, an energy-critical source term and a nonlinear, nonlocal Kelvin--Voigt damping. It thus lies at the intersection of two active areas of research: the theory of energy-critical wave equations and the analysis of nonlocal dissipative mechanisms in evolutionary PDEs.

	The principal objective of this work is study the well-posedness
	and long-time dynamical theory for problem \eqref{P}. First, we establish the
	existence of both global strong and weak solutions, together with
	continuous dependence on the initial data in the natural energy space
	$
	\mathcal H=H_0^1(\Omega)\times L^2(\Omega).
	$
	The analysis relies on the Galerkin approximation method, compactness
	arguments based on the Aubin--Lions theorem, and suitable density
	techniques. We then investigate the asymptotic behavior of the associated
	solution semigroup $S^\alpha(t):\mathcal H\to\mathcal H$ generated by the
	weak solutions of problem \eqref{P}. We prove that, for every
	$\alpha\in\mathbb{R}_+$, the corresponding dynamical system
	$(\mathcal H,S^\alpha(t))$ possesses a bounded absorbing set in $\mathcal H$
	and is gradient and asymptotically smooth. Consequently, it admits a
	compact global attractor
	$\mathcal A_\alpha=\mathcal M^u_{\alpha}(\mathcal N)$,
	where $\mathcal N$ denotes the set of stationary equilibria and
	$\mathcal M^u_{\alpha}(\mathcal N)$ is the corresponding unstable manifold.
	Moreover, we prove that the family of global attractors
	$\{\mathcal A_\alpha\}_{\alpha\in\mathbb{R}_+}$ is uniformly bounded in the
	more regular phase space
	$\mathcal H_1=(H^2(\Omega)\cap H_0^1(\Omega))\times H_0^1(\Omega)$.
	Finally, we study the dependence of the global attractors on the parameter
	$\alpha$ and prove that the family
	$\{\mathcal A_\alpha\}_{\alpha\in\mathbb{R}_+}$ is upper semicontinuous at
	$\alpha=0$, namely,
	$$
	d_{\mathcal H}(\mathcal A_\alpha,\mathcal A_0)
	=
	\sup_{\psi\in\mathcal A_\alpha}
	\inf_{\sigma\in\mathcal A_0}
	\|\psi-\sigma\|_{\mathcal H}
	\longrightarrow 0,
	\qquad\text{as }\alpha\to0.
	$$
	It is worth noting that the case $\alpha=0$ corresponds to the classical
	Kelvin--Voigt damping, for which problem \eqref{P} reduces to
	\begin{equation}\label{strongly-damped}
		u_{tt} - \Delta u - \Delta u_t + f(u) = h,
	\end{equation}
	Hence, the model considered here provides a nonlinear nonlocal extension of
	the classical Kelvin--Voigt equation while preserving its limiting
	dynamical behavior as $\alpha\to0$.
	
	\subsection{A Brief Review on Energy-Critical Wave Equations}
	The theory of quintic wave equations has been extensively developed over the past decades. The pioneering global existence result in the whole space $\mathbb{R}^3$ was obtained by Struwe~\cite{Struwe} for radially symmetric initial data. This was later generalized by Grillakis to the nonradial case for smooth initial data. Both approaches make essential use of explicit solution representations for the three-dimensional wave equation, combined with the Morawetz--Pohozaev identity. A complete well-posedness theory in the natural energy space was subsequently established by Shatah and Struwe~\cite{Shatah-Struwe-93}, with additional developments presented in~\cite{Struwe-94}. The corresponding class of Shatah--Struwe solutions enjoys improved space--time regularity, which has become a fundamental tool in the study of the long-time dynamics of energy-critical wave equations.
	
	The investigation of asymptotic behavior for dissipative wave equations with quintic critical nonlinearities in bounded three-dimensional domains began with the pioneering contributions of
	\cite{Carvalho-Cholewa,Pata-Zelik,Yang-Sun}. These papers considered strongly damped wave equations with classical
	Kelvin--Voigt damping, of the form \eqref{strongly-damped} and established fundamental results on their long-time dynamics.
	
	In the framework of bounded smooth domains \(\Omega\subset\mathbb{R}^n\), the long-time dynamics of wave equations with critical nonlinearities,
	$
	f(u)\sim |u|^{\frac{n+2}{n-2}},
	$
	which corresponds to the quintic growth \(f(u)\sim |u|^5\) in dimension \(n=3\), has also been the subject of extensive investigation. In particular, Carvalho and Cholewa~\cite[Section~4.3]{Carvalho-Cholewa} proved the existence of compact global attractors by exploiting Alekseev's nonlinear variation-of-constants formula. Subsequently, Pata and Zelik~\cite{Pata-Zelik} refined these results by establishing the existence of compact global attractors together with their optimal regularity properties.
	
	Further developments on global well-posedness, dissipativity, and the existence
	of attractors for quintic wave equations with fractional structural damping were
	obtained in
	\cite{KSZ,SavostianovZelik2014a,SavostianovZelik2014b,Savostianov-thesis}.
	More precisely, the authors considered equations of the form
	\begin{align*}
		u_{tt}-\Delta u+\gamma(-\Delta)^{\theta}u_t+f(u)=h,
	\end{align*}
	posed in bounded domains \(\Omega\subset\mathbb{R}^3\), with
	$
	\theta=0$ or $\theta=\frac12.
	$ In the case $\theta=0$, the approach developed in
	\cite[Chapter~2]{SavostianovZelik2014a} and \cite{KSZ}
	combines Strichartz estimates in bounded domains with suitable extensions of
	the Morawetz--Pohozaev identity. This framework allows the authors to establish
	the global existence of Shatah--Struwe solutions and the existence of a compact
	global attractor for the associated dynamical system.
	
	For the structural damping case \(\theta=\frac12\), the analysis developed in
	\cite[Chapter~4]{Savostianov-thesis} and \cite{SavostianovZelik2014a}
	relies on a suitable Lyapunov-type functional to obtain additional regularity
	of energy solutions. As a consequence, the authors proved global
	well-posedness, dissipativity, and the existence of smooth global and
	exponential attractors with finite Hausdorff and fractal dimensions. Moreover,
	the hidden regularizing effects induced by the structural damping were further
	analyzed in \cite{SavostianovZelik2014b}, leading to the construction of smooth
	attractors in the case \(\theta=\frac12\).
	
	Motivated by these developments and by the successful application of
	Strichartz estimates to the analysis of quintic wave equations, subsequent
	works have focused on dissipative models whose damping mechanisms are compatible with dispersive estimates. In this direction, Cavalcanti \textit{et al.}~\cite{Cavalcanti} investigated the quintic wave equation with localized damping of the form \(a(x)u_t\). Combining Strichartz estimates with the Galerkin approximation method, the authors established the global
	existence of Shatah-Struwe solutions and proved exponential energy decay by means of a contradiction argument. More recently, in the framework of Kelvin--Voigt dissipation, Cavalcanti and D. Cavalcanti~\cite{M. Cavalcanti-V. Cavalcanti} considered the energy-critical quintic wave equation with localized damping
	$$
	u_{tt}-\Delta u-\operatorname{div}(a(x)\nabla u_t)+u^5=0,
	$$
	where the damping coefficient \(a(x)\geq0\) is localized in space. Their analysis established the global well-posedness of finite-energy solutions and provided stability results for the evolution generated by the problem.
	
	For classes of quintic wave equations whose dissipative mechanisms are not compatible with Strichartz estimates, different techniques are required. In this direction, Lasiecka and Narciso~\cite{LasieckaNarciso2025} studied a quintic wave equation in a bounded three-dimensional domain with nonlinear damping of the form
	$
	g(u_t)\sim |u_t|^p u_t,$ $p=4.
	$
	The authors proved the existence of a compact global attractor and showed that, under the additional assumption \(g'(0)>0\), the associated dynamical system is quasi-stable. As a consequence, the attractor has finite fractal dimension, possesses additional regularity, and admits an exponential attractor. In this energy-critical setting, the analysis relies on refined energy-dissipation mechanisms, the derivation of suitable energy identities for weak solutions, appropriate modifications of Ball's method, and the general theory of quasi-stable dynamical systems. Related dissipative models have recently been investigated in
	\cite{LiuMengHanZhang2023,LiuMengZhang2025,Zhou2025}.
	
	The nonlinear nonlocal Kelvin--Voigt damping considered in problem~\eqref{P}
	belongs to a broader class of dissipative mechanisms usually referred to as
	\textit{averaged damping}. To place our model within this framework, we briefly
	review below some relevant developments on wave equations with averaged
	damping.
	
	\subsection{Wave Equations with Averaged Damping}
	
	Wave equations involving dissipative terms with nonlocal coefficients depending
	on the velocity of the system were investigated by Aloui \textit{et al.}
	\cite[Section~8]{ABH}. In this setting, the dissipative coefficient is obtained
	from global quantities associated with the state of the system, and the
	corresponding mechanism is commonly referred to as averaged damping. Within this class of dissipative mechanisms, Yan \textit{et al.}~\cite{YTZ}
	studied the wave equation
	\begin{align}\label{model_averaged_damping-1}
		u_{tt}-\Delta u+k\|u_t(t)\|^p u_t+f(u)=g(x),
		\quad \text{in } \Omega\times\mathbb{R}_{+},
	\end{align}
	subject to homogeneous Dirichlet boundary conditions. Here,
	$\Omega\subset\mathbb{R}^3$ is a bounded domain with smooth boundary,
	$k,p>0$ are fixed constants, and
	$g\in L^2(\Omega)$ is an external force. The nonlinear source term satisfies
	the subquintic growth condition
	$$
	|f'(s)|\leq C(1+|s|^{4-\kappa}),
	\qquad s\in\mathbb{R},\quad \kappa\in(0,4].
	$$
	By employing Strichartz estimates adapted to bounded domains, the authors
	established the existence of global Shatah--Struwe solutions and proved that
	the dynamical system generated by these solutions admits a global attractor. The non-autonomous counterpart of problem~\eqref{model_averaged_damping-1},
	with a time-dependent external force $g=g(x,t)$, was later investigated by
	Zhao \textit{et al.}~\cite{YZZT}. In that setting, they introduced the concept
	of a uniform $\varphi$-attractor and showed that the corresponding family of
	Shatah--Struwe solution processes possesses a uniform polynomial attractor.
	
	Subsequently, Zhou \textit{et al.}~\cite{ZLZ} investigated the energy-critical
	quintic wave equation with a nonlocal averaged damping mechanism, namely,
	problem~\eqref{model_averaged_damping-1}, where the nonlinear dissipation is
	given by
	$
	\|u_t(t)\|^{q}u_t(t).
	$
	In this critical setting, the authors established the existence and described
	the structure of the weak, strong, and exponential attractors associated with
	the dynamical system generated by the problem. Moreover, the monotone character of the dissipative coefficient played a fundamental role in the application of Strichartz estimates to overcome the difficulties arising from the critical growth of the source term.
	
	A particular class of averaged damping mechanisms is given by nonlinear
	nonlocal Kelvin--Voigt damping, which arises naturally in models where the
	dissipative coefficient depends on a global quantity associated with the
	velocity field. Such mechanisms, involving terms of the form
	$
	-\|\nabla u_t(t)\|^{\alpha}\Delta u_t,
	$
	were considered in plate models by Aloui \textit{et al.}
	~\cite[Section~8, Eqs.~(8.2), (8.3), (8.11), and (8.12)]{ABH}. More recently, Cavalcanti \textit{et al.}
	~\cite{Cavalcanti-Cavalcanti-Faria-Okawa} investigated this type of nonlinear
	nonlocal Kelvin--Voigt damping in the homogeneous wave equation
	\begin{align}\label{model-cavalcanti}
		u_{tt}-\Delta u-\|\nabla u_t(t)\|^{2}\Delta u_t=0,
	\end{align}
	posed in a bounded domain with homogeneous Dirichlet boundary conditions.
	They established the well-posedness of the corresponding initial-boundary value
	problem and derived decay estimates for the total energy, describing the
	asymptotic stability of the system.
	
	This is where the present work comes in. Our problem \eqref{P} extends
	the model \eqref{model-cavalcanti} and the analysis developed by
	Cavalcanti \textit{et al.}~\cite{Cavalcanti-Cavalcanti-Faria-Okawa} by
	incorporating a nonlocal nonlinear Kelvin--Voigt damping term of the
	form $-\|\nabla u_t\|_{L^2(\Omega)}^\alpha\Delta u_t$, with
	$\alpha\in\mathbb{R}_+$, together with an energy-critical quintic source
	term and a nontrivial external force $h$. While Cavalcanti
	\textit{et al.}~\cite{Cavalcanti-Cavalcanti-Faria-Okawa} established
	well-posedness and energy decay for the related model
	\eqref{model-cavalcanti}, we extend their analysis by investigating the
	asymptotic dynamics generated by the associated dynamical system over
	the full range $\alpha\in\mathbb{R}_+$. In particular, we establish the
	existence and properties of compact global attractors and analyze their
	dependence on the damping parameter. A key feature of our approach is
	that the well-posedness theory exploits the regularizing effect of the
	nonlinear Kelvin--Voigt damping, together with appropriate regularity
	and dissipativity assumptions on the source term, rather than relying
	on Strichartz estimates.
	
	\subsection{Organization of the paper}
	
	The paper is organized as follows. Section 2 introduces the functional
	setting and the assumptions used throughout the analysis. Section 3 is
	devoted to the well-posedness of problem \eqref{P}, with the main result
	stated in Theorem \ref{theo-global-solution}. In Section 4, we establish the
	existence of a family of compact global attractors for the associated
	dynamical systems; see Theorem \ref{theo-main}. Section 5 concerns the
	Kolmogorov $\varepsilon$-entropy of these attractors. We derive the
	entropy estimate in Theorem \ref{theo-main2} and, in the particular case
	$\alpha=0$, obtain the finite fractal dimensionality of the global
	attractor. Section 6 is devoted to the higher regularity of the
	attractors in the space $\mathcal{H}_1$, as stated in Theorem
	\ref{theo-regularity}. Finally, Section 7 addresses the upper
	semicontinuity of the family of attractors at $\alpha=0$; see Theorem
	\ref{upper-semicontinuity}.
	
	\section{Functional Setting, Assumptions and Main Results}
	\subsection{Functional Setting and the Assumptions Imposed}
	
	We first introduce some notations that will be used throughout the paper.
	We denote by $H^s(\Omega)$ the $L^2(\Omega)$-based Sobolev space of order
	$s$, and by $H_0^s(\Omega)$ the closure of $C_0^\infty(\Omega)$ in
	$H^s(\Omega)$. Throughout the paper, we use the notation
	$$
	\|u\|:=\|u\|_{L^2(\Omega)},
	\qquad
	\|u\|_p:=\|u\|_{L^p(\Omega)},
	\qquad
	(u,v):=(u,v)_{L^2(\Omega)}.
	$$
	The inner product and norm in $H_0^1(\Omega)$ are given by
	$$
	((u,v)):=(\nabla u,\nabla v),
	\qquad
	\|u\|_{H_0^1(\Omega)}:=\|\nabla u\|.
	$$
	We consider the Laplace operator $-\Delta$ associated with the bilinear form
	$$
	a(u,v):=(\nabla u,\nabla v),
	\qquad
	u,v\in H_0^1(\Omega).
	$$
	The analysis is carried out in the phase space
	$$
	\mathcal H:=H_0^1(\Omega)\times L^2(\Omega),
	$$
	which is the natural space for weak solutions of problem~\eqref{P}. It is
	endowed with the norm
	\[
	\|U\|_{\mathcal H}^2
	:=
	\|\nabla u\|^2+\|v\|^2,
	\qquad
	U=(u,v).
	\]
	For the construction of strong solutions, we also introduce the more regular
	space
	$$
	\mathcal H_1
	:=
	\bigl(H^2(\Omega)\cap H_0^1(\Omega)\bigr)
	\times H_0^1(\Omega),
	$$
	equipped with the norm
	$$
	\|U\|_{\mathcal H_1}^2
	:=
	\|\Delta u\|^2+\|\nabla v\|^2,
	\qquad
	U=(u,v).
	$$
	The energy functional associated with problem~\eqref{P} is defined by
	\begin{align}\label{energy-functional}
		E_U(t)
		=
		\frac12\|U(t)\|_{\mathcal H}^2
		+\int_\Omega F(u)\,dx
		-\int_\Omega h u\,dx,
	\end{align}
	where $F(u):=\int_0^uf(\tau)d\tau$. 
	
	We shall assume throughout the paper that the external force $h$ and the nonlinear term $f$ satisfy the following conditions.
	\begin{assumption}\rm\label{Assumption}
		Let $h\in L^2(\Omega)$ be a given external force. Assume that
		$f\in C^2(\mathbb{R})$, $f(0)=0$, and that there exists a positive
		constant $L_f>0$ such that
		\begin{equation}
			\label{hyp_f''}
			|f''(s)|\le L_f\bigl(1+|s|^{3}\bigr),
			\qquad \forall s\in\mathbb{R}.
		\end{equation}
		Moreover, the following standard dissipativity condition holds:
		\begin{equation}
			\label{hyp-inf-f}
			\lim_{|s|\to\infty}\frac{f(s)}{s}>-\lambda_1,
		\end{equation}
		where $\lambda_1$ denotes the first eigenvalue of the Laplacian
		$-\Delta$ subject to homogeneous Dirichlet boundary conditions.
	\end{assumption}
	
	\begin{remark}\rm From Mean Value Theorem and \eqref{hyp_f''}, we get
		\begin{eqnarray}\label{hyp_f'}
			|f'(s)|\le C_f(1+|s|^4),\quad \forall s\in \mathbb{R},
		\end{eqnarray}
		for some constant $C_{f}>0$.
		Condition \eqref{hyp-inf-f} implies that there exists a constant $\nu\in [0,\lambda_1)$ and $C_{\nu}>0$ such that $F(s):=\int_0^sf(\tau)d\tau$ satisfies:
		\begin{eqnarray}\label{2.4}
			-C_{\nu}-\frac{\nu}{2}|s|^2\le F(s)\le f(s)s+\frac{\nu}{2}|s|^2,\quad \forall s\in \mathbb{R},
		\end{eqnarray}
	\end{remark}
	\begin{remark}\label{lowerbounded}\rm
		From the conditions \eqref{hyp_f'} and \eqref{hyp-inf-f}, it follows that
		\begin{equation}\label{lower1}
			f'(s)\geq -K_f, \qquad s\in\mathbb{R},
		\end{equation}
		for some positive constant $K_f$. Indeed, by the dissipativity condition \eqref{hyp-inf-f}, there exist $\mu \in [0,\lambda_1)$ and a sufficiently large $N>0$ such that
		\begin{equation*}
			f'(s) \ge -\mu, \quad \text{for } |s|>N.
		\end{equation*}
		On the other hand, from \eqref{hyp_f'}, we have
		\begin{equation*}
			|f'(s)| \le C_{f}(1+|N|^{4}), \quad \text{for } |s| \le N.
		\end{equation*}
		Taking $K_f = \max\{\mu, C_{f}(1+|N|^{4})\}$ yields \eqref{lower1}.
	\end{remark}
	
	Throughout the article we will use $0<\omega\le 1$ as the constant defined by
	\begin{align}
		\label{def-omega}\omega:=1-\frac{\nu}{\lambda_1}>0.
	\end{align}
	
	\section{Well-posedness}
	We first specify the notions of strong and weak solutions used in the analysis of problem \eqref{P}. The corresponding well-posedness result is stated in Theorem \ref{theo-global-solution}.
	\begin{definition}\label{3.1}
		Let $(u_0,u_1)\in\mathcal H$. A function $u$ with
		$(u(0),u_t(0))=(u_0,u_1)$ is called a solution of problem \eqref{P}
		on $[0,T]$ according to the following definitions:
		\begin{itemize}
			\item[{\bf (S)}] $u$ is a \textbf{strong solution} if
			$(u,u_t)\in L^\infty(0,T;\mathcal H_1)$ and, for every
			$\psi\in H_0^1(\Omega)$,
			\begin{align*}
				\bigl(
				u_{tt}-\Delta u
				-\|\nabla u_t\|^\alpha\Delta u_t
				+f(u)-h,\psi
				\bigr)=0
			\end{align*}
			for a.e. $t\in(0,T)$.
			
			\item[{\bf (W)}] $u$ is a \textbf{weak solution} if
			$U=(u,u_t)\in L^\infty(0,T;\mathcal H)$ and, for every
			$\psi\in H_0^1(\Omega)$ and $t\in[0,T]$,
			\begin{align}\label{variational-formula}
				&(u_t(t),\psi)
				+\int_0^t
				\big[
				(\nabla u(\tau),\nabla\psi)
				+\|\nabla u_t(\tau)\|^\alpha
				(\nabla u_t(\tau),\nabla\psi)
				+(f(u(\tau)),\psi)
				\big]\,d\tau
				\nonumber\\
				&\qquad=(u_1,\psi)
				+\int_0^t(h,\psi)\,d\tau.
			\end{align}
		\end{itemize}
	\end{definition}
	
	The following theorem establishes the well-posedness of problem~\eqref{P} in the energy space $\mathcal{H}$. The existence of strong solutions is obtained by the Galerkin
	method, while weak solutions are constructed by a density argument.
	
	\begin{theorem}{\bf [Global existence]}\label{theo-global-solution}
		Let $T>0$ be arbitrary and let $\alpha\in\mathbb{R}_+$. Under Assumption
		\ref{Assumption}, the following statements hold.
		\begin{itemize}
			\item[{\bf(I)}] For every $U_0=(u_0,u_1)\in\mathcal H_1$, there exists a unique strong solution of problem \eqref{P} on $[0,T]$ such that
			\begin{align}\label{class-regular-solution}
				U=(u,u_t)\in L^\infty(0,T;\mathcal H_1),\quad
				u_t\in L^{\alpha+2}
				(0,T;
				D((-\Delta)^{\frac{\alpha+4}{2(\alpha+2)}})),\quad
				u_{tt}\in L^\infty(0,T;H^{-1}(\Omega)).
			\end{align}
			\item[{\bf(II)}] For every $U_0=(u_0,u_1)\in\mathcal H$, there exists a unique
			weak solution of problem \eqref{P} satisfying
			\begin{align*}
				U=(u,u_t)&\in C([0,T];\mathcal H),\quad
				u_t\in L^{\alpha+2}(0,T;H_0^1(\Omega)).
			\end{align*}
			\item[{\bf(III)}] Both classes of solutions depend continuously on the initial data. More
			precisely,
			\begin{align}\label{Lipschtz}
				\|U^1(t)-U^2(t)\|_{\mathcal H}^2
				\leq C_{R,T}
				\|U^1(0)-U^2(0)\|_{\mathcal H}^2,
				\qquad t\in[0,T],
			\end{align}
			where $U^j(t)=(u^j(t),u_t^j(t))$, $j=1,2$, correspond to initial data
			$U^j(0)=(u_0^j,u_1^j)$ satisfying
			$\|U^j(0)\|_{\mathcal H}\leq R$. In particular, if
			$U_0^1=U_0^2$, then $U^1(t)=U^2(t)$ for all $t\in[0,T]$.
			\item[{\bf(IV)}] Moreover, the energy equality
			\begin{equation}\label{ei}
				E_U(t)+\int_s^t\|\nabla u_t(\tau)\|^{\alpha+2}\,d\tau
				=E_U(s),
				\qquad 0\leq s\leq t,
			\end{equation}
			holds, where $E_U$ is defined in \eqref{energy-functional}.
		\end{itemize}
	\end{theorem}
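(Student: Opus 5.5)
We prove (I) by Faedo--Galerkin approximation in the Dirichlet eigenbasis $\{w_j\}$ of $-\Delta$. The approximate system $u^m_{tt}-\Delta u^m-\|\nabla u^m_t\|^\alpha\Delta u^m_t+P_mf(u^m)=P_mh$ is a finite-dimensional ODE. Its vector field is continuous, since $v\mapsto\|\nabla v\|^\alpha\nabla v$ is continuous for $\alpha\ge0$, so Peano's theorem gives local solutions.

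\emph{First estimate.} Testing with $u^m_t$ gives the energy identity $\frac{d}{dt}E_{U^m}+\|\nabla u^m_t\|^{\alpha+2}=0$. By \eqref{2.4} and \eqref{def-omega}, $E_U\ge \frac{\omega}{4}\|U\|_{\mathcal H}^2-C(1+\|h\|^2)$. This bounds $U^m$ in $L^\infty(0,T;\mathcal H)$ and $u^m_t$ in $L^{\alpha+2}(0,T;H_0^1)$, uniformly in $m$ (note $E_{U}(0)$ is finite by the embedding $H^1_0\subset L^6$).

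\emph{Second estimate.} Testing with $-\Delta u^m_t$ gives
\[
\tfrac12\tfrac{d}{dt}\bigl(\|\nabla u^m_t\|^2+\|\Delta u^m\|^2\bigr)+\|\nabla u^m_t\|^\alpha\|\Delta u^m_t\|^2=(\nabla f(u^m),\nabla u^m_t)-(\nabla h,\nabla u^m_t).
\]
The $h$ term should first be written as $\frac{d}{dt}(h,-\Delta u^m)$ and moved into the energy, since $h$ is only in $L^2$.

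This is where I expect the main obstacle. The critical term $(f'(u)\nabla u,\nabla u_t)$ is bounded by $C(1+\|u\|_{L^\infty}^4)\|\nabla u\|\|\nabla u_t\|$, and $\|u\|_\infty^4\lesssim \|\nabla u\|^2\|\Delta u\|^2$ by Agmon in 3D. This gives a Gronwall inequality of the form $y'\le C(1+y)^{3/2}$, which is only local in time.

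My plan to close it globally is to exploit the damping. First write $(\nabla f(u),\nabla u_t)=-(f(u),\Delta u_t)\le \|f(u)\|\,\|\Delta u_t\|$. Then absorb using Young's inequality with the degenerate weight $\|\nabla u_t\|^\alpha$. Interpolation between $H_0^1$ and $H^2$ should give the $D((-\Delta)^{\frac{\alpha+4}{2(\alpha+2)}})$ regularity, namely $\|(-\Delta)^{\frac{\alpha+4}{2(\alpha+2)}}u_t\|^{\alpha+2}\lesssim \|\nabla u_t\|^\alpha\|\Delta u_t\|^2$. Meanwhile $\|f(u)\|\lesssim 1+\|u\|_{10}^5$ is controlled through $\|\Delta u\|$ at a cost that the already integrable quantity $\|\nabla u_t\|^{\alpha+2}$ must compensate.

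If this fails for $\alpha>0$ because of degeneracy when $\nabla u_t$ is small, I will try testing instead with $-\Delta u-\Delta u_t$ type multipliers, i.e.\ the functional $\|\nabla u_t\|^2+\|\Delta u\|^2$. The aim is a Gronwall factor of the form $1+\|\nabla u_t\|^{\alpha+2}$, which is in $L^1(0,T)$. Once the $\mathcal H_1$ bound holds, $u^m_{tt}$ is bounded in $L^\infty(0,T;H^{-1})$ directly from the equation.

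\emph{Passage to the limit.} Aubin--Lions gives $u^m\to u$ strongly in $C([0,T];H_0^1)$ and $u^m_t\to u_t$ in $C([0,T];L^2)$, which handles $f(u^m)$. The nonlocal term needs $\|\nabla u^m_t\|^\alpha\nabla u^m_t\rightharpoonup\|\nabla u_t\|^\alpha\nabla u_t$. I will obtain this by Minty's trick, using monotonicity of $v\mapsto \|\nabla v\|^\alpha(-\Delta v)$ as the gradient of the convex functional $\frac1{\alpha+2}\|\nabla v\|^{\alpha+2}$, together with the energy identity. Alternatively, the $\mathcal H_1$ bounds and compactness of $u^m_t$ in $L^2(0,T;H_0^1)$ give strong convergence of $\nabla u^m_t$ directly.

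\emph{Continuous dependence (III).} For $w=u^1-u^2$, test the difference equation with $w_t$. Monotonicity gives
\[
\bigl(\|\nabla u^1_t\|^\alpha\nabla u^1_t-\|\nabla u^2_t\|^\alpha\nabla u^2_t,\nabla w_t\bigr)\ge0.
\]
The source difference is estimated by $\|f(u^1)-f(u^2)\|_{6/5}\|w_t\|_6$. Since only $w_t\in L^2$ is controlled, this needs care. For strong solutions I will use $|f(u^1)-f(u^2)|\lesssim(1+|u^1|^4+|u^2|^4)|w|$ with $u^j\in L^\infty(0,T;H^2)\subset L^\infty$ to get \eqref{Lipschtz}, with a constant depending on $R$ only through quantities finite on $[0,T]$.

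For weak data this is the delicate point. I plan to instead use the damping: $(f(u^1)-f(u^2),w_t)\le C\|(1+|u^1|^4+|u^2|^4)\|_{3/2}\|w\|_6\|w_t\|_6$, whose cube-growth factor is bounded by $\|\nabla u^j\|^4$. I will absorb $\|\nabla w_t\|$ using the lower bound of the nonlocal damping difference in the form $c(\|\nabla u^1_t\|^\alpha+\|\nabla u^2_t\|^\alpha)\|\nabla w_t\|^2$, or via a $w$-multiplier in the case $\alpha=0$.

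Weak solutions in (II) then follow by density: approximate $\mathcal H$-data by $\mathcal H_1$-data and use (III) to obtain a Cauchy sequence in $C([0,T];\mathcal H)$. The energy equality (IV) holds for strong solutions by direct computation and passes to the limit, using strong convergence in $C([0,T];\mathcal H)$ and in $L^{\alpha+2}(0,T;H_0^1)$, the latter again following from the difference estimate.
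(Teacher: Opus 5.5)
\textbf{Main gap: the Lipschitz estimate in $\mathcal H$ for $\alpha>0$.} Uniqueness of weak solutions, the Cauchy-sequence construction in (II), statement (III) itself, and your route to (IV) all rest on this estimate, and your proposal does not establish it.

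Your strong-solution version uses $\|u^j\|_{L^\infty(0,T;H^2)}$. Its constant therefore depends on the $\mathcal H_1$ norms of the data, and these are unbounded along an $\mathcal H_1$-approximation of $\mathcal H$ data, so it cannot feed the density argument.

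Your energy-level version reduces to absorbing $C_R\|\nabla w\|\,\|\nabla w_t\|$, and this fails in both ways you suggest:
\begin{itemize}
\item Absorbing into $\frac12(\|\nabla u^1_t\|^\alpha+\|\nabla u^2_t\|^\alpha)\|\nabla w_t\|^2$ produces the Gronwall coefficient $(\|\nabla u^1_t\|^\alpha+\|\nabla u^2_t\|^\alpha)^{-1}$. This has no integrable bound: take $U^1_0$ stationary, so $u^1_t\equiv0$, and $U^2_0$ close to it.
\item Absorbing into the strong-monotonicity term $c_\alpha\|\nabla w_t\|^{\alpha+2}$ leaves $C\|\nabla w\|^{\frac{\alpha+2}{\alpha+1}}$. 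This is a sublinear Gronwall inequality, which gives neither uniqueness nor \eqref{Lipschtz}.
\end{itemize}
Only $\alpha=0$ closes, because there the damping difference is exactly $\|\nabla w_t\|^2$.

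The missing idea is the hypothesis $|f''(s)|\le L_f(1+|s|^3)$, which your proposal never uses. The paper integrates the source term by parts in time instead of absorbing it. With $\ell:=\int_0^1f'(\theta u^1+(1-\theta)u^2)\,d\theta$,
\[ (f(u^1)-f(u^2),w_t)=\tfrac12\tfrac{d}{dt}\int_\Omega\ell\,|w|^2\,dx-\tfrac12\int_\Omega\partial_t\ell\,|w|^2\,dx, \]
and H\"older with $\frac12+\frac16+\frac13=1$ gives $\bigl|\int_\Omega\partial_t\ell\,|w|^2dx\bigr|\le C_R(\|\nabla u^1_t\|+\|\nabla u^2_t\|)\|\nabla w\|^2$. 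Then:
\begin{itemize}
\item the damping difference is simply dropped as nonnegative;
\item $\|W\|_{\mathcal H}^2+\int_\Omega\ell|w|^2dx+K_f\|w\|^2$ is equivalent to $\|W\|_{\mathcal H}^2$ with constants depending only on $R$;
\item the Gronwall factor $1+\|\nabla u^1_t\|^{\alpha+2}+\|\nabla u^2_t\|^{\alpha+2}$ has $L^1(0,T)$ norm controlled by $R$ and $T$ through the energy identity.
\end{itemize}
This gives \eqref{Lipschtz} with $C_{R,T}$ independent of $\mathcal H_1$ norms, which is exactly what the density argument needs. If you also keep $c_\alpha\|\nabla w_t\|^{\alpha+2}$ on the left, you get the strong $L^{\alpha+2}(0,T;H_0^1)$ convergence your limit argument for (IV) requires. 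With that, your route to (IV) is a valid alternative to the paper's time mollification.

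\textbf{Smaller point in (I): the obstacle you flag is not one.} Your own Agmon bound already gives $|(f'(u^m)\nabla u^m,\nabla u^m_t)|\le C_R(1+\|\Delta u^m\|^2)\|\nabla u^m_t\|$. The first estimate puts $\|\nabla u^m_t\|$ in $L^{\alpha+2}(0,T)$ uniformly in $m$. So keep it as a time-dependent coefficient rather than bounding it by $y^{1/2}$, and Gronwall closes globally with factor $1+\|\nabla u^m_t\|^{\alpha+2}$.

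Your first fallback does fail for $\alpha>0$, since Young against the degenerate weight leaves $\|\nabla u^m_t\|^{-\alpha}\|f(u^m)\|^2$. Your second fallback is not yet an argument.

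The paper closes this step with the same $f''$ device as above. It puts $\frac12\int_\Omega f'(u)|\nabla u|^2dx$ into the $\mathcal H_1$ energy and bounds $\frac12\int_\Omega f''(u)u_t|\nabla u|^2dx\le C_R\|\nabla u_t\|\,\|\Delta u\|^2$. Your direct Agmon route is equally valid here and does not need $f''$.
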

	\noindent{\bf Proof of Theorem \ref{theo-global-solution}-{\bf(I)}} To prove the existence of strong solutions, we employ the standard
	Faedo--Galerkin method. 
	\subsection*{Galerkin Approximation}
	Let
	$
	0<\lambda_1\le\lambda_2\le\cdots
	$
	be the eigenvalues of the Dirichlet Laplacian $-\Delta$, and let
	$\{\omega_j\}_{j\in\mathbb N}$ be the corresponding eigenfunctions.
	Assume that the boundary
	$\Gamma=\partial\Omega$ is sufficiently smooth so that
	$
	\omega_j\in H^2(\Omega)\cap H_0^1(\Omega),$ $j\in\mathbb N,
	$
	and that $\{\omega_j\}_{j\in\mathbb N}$ forms an orthonormal basis of
	$L^2(\Omega)$. For each $n\in\mathbb N$, define
	$
	V_n=\operatorname{span}\{\omega_1,\ldots,\omega_n\},
	$
	and let
	$\mathcal P_n:L^2(\Omega)\to V_n$
	denote the orthogonal projection onto $V_n$.
	Choose
	$
	(u_{0n},u_{1n})\in V_n\times V_n
	$
	such that
	$$
	(u_{0n},u_{1n})\to(u_0,u_1)
	\quad\text{in }\mathcal H_1.
	$$
	We construct Galerkin approximations of the form
	$$
	u^n(t)=\sum_{j=1}^n y_{jn}(t)\omega_j,
	\qquad
	y_{jn}(t)=(u^n(t),\omega_j),
	$$
	where the coefficients $\{y_{jn}\}_{j=1}^n$ satisfy the projected system
	\begin{align}
		\begin{split}\label{approximate-problem}
			&\bigl(
			u_{tt}^n-\Delta u^n
			-\|\nabla u_t^n(t)\|^\alpha\Delta u_t^n
			+f(u^n)-h,\omega_j
			\bigr)=0,
			\quad t>0,\quad j=1,\ldots,n,\\
			&(u^n(0),u_t^n(0))=(u_{0n},u_{1n}),
		\end{split}
	\end{align}
	By the classical theory of ordinary differential equations, the finite-dimensional problem \eqref{approximate-problem} admits a unique local solution on a maximal interval $[0,t_n)$, where $0<t_n\leq\infty$. The first a priori estimate established below shows that the approximate solution extends to the entire interval $[0,T]$. The second and third a priori estimates provide bounds that are independent of the Galerkin dimension $n$. These estimates yield the compactness and regularity required to extract convergent subsequences and pass to the limit in the approximate problem \eqref{approximate-problem}, thereby establishing the existence of a strong solution to problem \eqref{P}.
	
	\subsection*{A Priori Estimates}
	\paragraph{\it First estimate}
	We begin by deriving the standard energy estimate for the Galerkin approximations. Since
	$
	\|U_0\|_{\mathcal H}\le R,
	$
	the approximating initial data can be chosen so that
	\begin{align}\label{conv-initial-data}
		\|U^n(0)\|_{\mathcal H}\le C_R,
		\qquad
		\|U^n(0)-U_0\|_{\mathcal H}\longrightarrow0
		\quad\text{as }n\to\infty.
	\end{align}
	Fix $t\in(0,t_n)$. Taking $\omega_j=u_t^n$ in
	\eqref{approximate-problem} and integrating the resulting identity over
	$(0,t)$, we obtain
	\begin{align}\label{Est1-1}
		E_{U^n}(t)
		+\int_0^t\|\nabla u_t^n(s)\|^{\alpha+2}\,ds
		=
		E_{U^n}(0),
		\qquad t\in(0,t_n),
	\end{align}
	where $E_{U^n}$ denotes the energy functional defined in
	\eqref{energy-functional}, evaluated along the Galerkin approximations. By the assumption \eqref{2.4} and the Poincar\'e inequality, we have
	\begin{align*}
		\int_{\Omega}F(u^n)\,dx
		\geq
		-\frac{C_{\nu}}{2}\|u^n(t)\|^2
		-C_{\nu}|\Omega| \geq
		-\frac{C_{\nu}}{2\lambda_1}
		\|\nabla u^n(t)\|^2
		-C_{\nu}|\Omega|.
	\end{align*}
	On the other hand, by Hölder's and Young's inequalities, we obtain
	\begin{align*}
		\int_{\Omega}h u^n\,dx
		&\leq
		\|h\|\|u^n\|
		\leq
		\frac{1}{\lambda\sigma}\|h\|^2
		+\frac{\sigma}{4}\|\nabla u^n\|^2,
		\qquad \sigma>0.
	\end{align*}
	Choosing $\sigma=\omega$, where $\omega$ is defined in \eqref{def-omega},
	and combining the above estimates with the definition of
	$E_{U^n}$, we obtain
	\begin{align}\label{Est1-2}
		E_{U^n}(t)
		&\geq
		\frac{1}{2}\|u_t^n(t)\|^2
		+\frac{\omega}{2}\|\nabla u^n(t)\|^2
		-\frac{\omega}{4}\|\nabla u^n(t)\|^2
		-\frac{1}{\omega\lambda_1}\|h\|^2
		-C_\nu|\Omega| \notag\\
		&\geq
		\frac{\omega}{4}\|U^n(t)\|_{\mathcal H}^{2}
		-C_\nu|\Omega|
		-\frac{1}{\omega\lambda_1}\|h\|^2 .
	\end{align}
	Moreover, the growth assumption on $F$ in \eqref{2.4}, combined with the Sobolev embedding $H_0^1(\Omega)\hookrightarrow L^6(\Omega)$, yields
	\begin{align}\begin{split}\label{Est1-3}
			E_{U^n}(0)
			\leq C\left(\|U_0^n\|_{\mathcal H}^{2}
			+\|u^n_0\|^2+\|u_0^n\|_{6}^{6}
			+\|h\|\|u^n_0\|\right)
			\leq c_R.
	\end{split}\end{align}
	Consequently, by combining the estimate \eqref{Est1-2}, the energy
	identity \eqref{Est1-1}, and the uniform bound \eqref{Est1-3} with the
	convergence property \eqref{conv-initial-data}, we obtain
	\begin{align}\label{Est-1}
		\|U^n(t)\|_{\mathcal H}^{2}
		+\int_0^t\|\nabla u_t^n(s)\|^{\alpha+2}\,ds
		\leq \frac{4}{\omega}\left[C_{\nu}|\Omega|+\frac{1}{\lambda_1\omega}\|h\|^2+c_R\right]=:C_R,\quad \forall t\in[0,t_n).
	\end{align}
	Moreover, the constant $C_R$ is independent of both $n$ and $t_n$.
	Therefore, the local existence time for the Galerkin approximations can be
	chosen uniformly with respect to $n$. In particular, the standard continuation
	argument implies that
	$t_n=T$
	for any fixed $T>0$, and hence the estimate \eqref{Est-1} holds on the whole
	interval $[0,T]$.
	
	\medskip
	\paragraph{\it Second estimate}
	We now consider the case where $U_0\in\mathcal{H}_1$. The approximating initial data can be chosen such that
	\begin{align}\label{conv-initial-data-2}
		\|U^n(0)\|_{\mathcal{H}_1}\le R,
		\qquad
		\|U^n(0)-U_0\|_{\mathcal{H}_1}\longrightarrow0
		\quad\text{as }n\to\infty.
	\end{align}
	Setting $\omega_j=-\Delta u_t^n$ in \eqref{approximate-problem}, we obtain
	\begin{align}
		\label{Est3-a}
		\begin{split}
			&\frac{d}{dt}\left[
			\frac12\|U^n(t)\|_{\mathcal H_1}^2
			+\frac12\int_\Omega f'(u^n)|\nabla u^n|^2\,dx
			-\int_\Omega h\Delta u^n\,dx
			\right]+\|\nabla u_t^n(t)\|^\alpha
			\|\Delta u_t^n(t)\|^2\\
			&\quad=
			\frac12
			\int_\Omega
			f''(u^n)u_t^n|\nabla u^n|^2\,dx.
		\end{split}
	\end{align}
	We introduce the energy functional
	\begin{align*}
		E^1_{U^n}(t):=
		\frac12\|U^n(t)\|_{\mathcal H_1}^2
		+\frac12\int_\Omega f'(u^n)|\nabla u^n|^2\,dx
		-\int_\Omega h\Delta u^n\,dx
		+\frac{K_f}{2}\|\nabla u^n(t)\|^2
		+\|h\|^2.
	\end{align*}
	Hence, \eqref{Est3-a} can be rewritten as
	\begin{align}
		\label{Est3-c}
		\begin{split}
			\frac{d}{dt}E^1_{U^n}(t)
			&+\|\nabla u_t^n(t)\|^\alpha
			\|\Delta u_t^n(t)\|^2
			\\
			&=
			K_f\int_\Omega
			\nabla u_t^n\cdot\nabla u^n\,dx
			+\frac12
			\int_\Omega
			f''(u^n)u_t^n|\nabla u^n|^2\,dx.
		\end{split}
	\end{align}
	We next show that $E^1_{U^n}(t)$ is equivalent to
	$\|U^n(t)\|_{\mathcal H_1}^2$.
	Indeed, by H\"older's inequality,
	$$
	\left|
	\int_\Omega h\Delta u^n\,dx
	\right|
	\le
	\|h\|^2+\frac14\|\Delta u^n(t)\|^2.
	$$
	Moreover, by Assumption \eqref{hyp_f'}, Remark \ref{lowerbounded},
	H\"older's inequality with
	$\frac23+\frac13=1$,
	and the embedding
	$H_0^1(\Omega)\hookrightarrow L^6(\Omega)$,
	\begin{align*}
		-\frac{K_f}{2}\|\nabla u^n(t)\|^2
		\le
		\frac12
		\int_\Omega
		f'(u^n)|\nabla u^n|^2\,dx
		\le&\;
		C
		\int_\Omega
		(1+|u^n|^4)|\nabla u^n|^2\,dx
		\\
		\le&\;
		C\left(1+
		\|u^n(t)\|_6^4\right)
		\|\nabla u^n(t)\|_6^2
		\\
		\le&\;
		C_R\|\Delta u^n(t)\|^2.
	\end{align*}
	Combining the above two estimates with the definition of
	$E^1_{U^n}$, we obtain
	\begin{align}
		\label{equiv_E1}
		\frac14\|U^n(t)\|_{\mathcal H_1}^2
		\le
		E^1_{U^n}(t)
		\le
		C_R\|U^n(t)\|_{\mathcal H_1}^2.
	\end{align}
	Now, by the Poincaré inequality, estimate \eqref{Est-1}, and the equivalence
	\eqref{equiv_E1}, we have
	\begin{align*}
		K_f\int_\Omega \nabla u_t^n\cdot\nabla u^n\,dx
		\le
		\frac{K_f}{\lambda_1^{1/2}}
		\|\nabla u_t^n(t)\|
		\|\Delta u^n(t)\|
		\le
		C_R\|\Delta u^n(t)\|\le
		C_R E_{U^n}^1(t).
	\end{align*}
	Moreover, by Assumption \eqref{hyp_f''}, Hölder's inequality with
	$\frac12+\frac16+\frac13=1$, the embedding
	$H_0^1(\Omega)\hookrightarrow L^6(\Omega)$, estimate
	\eqref{Est-1}, and \eqref{equiv_E1}, we obtain
	\begin{align*}
		\frac12
		\int_\Omega
		f''(u^n)u_t^n|\nabla u^n|^2\,dx
		&\le
		C\bigl(1+\|u^n(t)\|_6^3\bigr)
		\|u_t^n(t)\|_6
		\|\nabla u^n(t)\|_6^2
		\\
		&\le
		C\bigl(1+\|\nabla u^n(t)\|^3\bigr)
		\|\nabla u_t^n(t)\|
		\|\Delta u^n(t)\|^2
		\\
		&\le
		C_R
		\bigl(1+\|\nabla u_t^n(t)\|^{\alpha+2}\bigr)
		\|\Delta u^n(t)\|^2
		\\
		&\le
		C_R
		\bigl(1+\|\nabla u_t^n(t)\|^{\alpha+2}\bigr)
		E_{U^n}^1(t).
	\end{align*}
	Next, by the interpolation inequality associated with the Dirichlet Laplacian,
	\begin{align*}
		\|\nabla u_t^n\|^{\alpha}\|\Delta u_t^n\|^2
		=
		\left[
		\|(-\Delta)^{1/2}u_t^n\|^{\frac{\alpha}{\alpha+2}}
		\|(-\Delta)u_t^n\|^{\frac{2}{\alpha+2}}
		\right]^{\alpha+2}
		\ge
		C_\alpha
		\|
		(-\Delta)^{\frac{\alpha+4}{2(\alpha+2)}}
		u_t^n
		\|^{\alpha+2}.
	\end{align*}
	Hence, combining the last three estimates in \eqref{Est3-c}, we conclude that
	\begin{align}
		\label{Est3-e}
		\frac{d}{dt}E_{U^n}^1(t)
		+
		C_\alpha
		\|
		(-\Delta)^{\frac{\alpha+4}{2(\alpha+2)}}
		u_t^n(t)
		\|^{\alpha+2}
		\le
		C_R
		\bigl(1+\|\nabla u_t^n(t)\|^{\alpha+2}\bigr)
		E_{U^n}^1(t).
	\end{align}
	Integrating \eqref{Est3-e} over $[0,t]$ and using estimates
	\eqref{Est-1}, \eqref{equiv_E1}, and assumption
	\eqref{conv-initial-data-2}, we obtain
	$$
	E_{U^n}^1(t)
	+
	C_\alpha
	\int_0^t
	\|
	(-\Delta)^{\frac{\alpha+4}{2(\alpha+2)}}
	u_t^n(s)\|^{\alpha+2}\,ds
	\le C_R.
	$$
	Finally, using the equivalence of norms established in
	\eqref{equiv_E1}, we conclude that
	\begin{align}
		\label{strongy-final-2}
		\|U^n(t)\|_{\mathcal H_1}^{2}
		+
		\int_0^t
		\|
		(-\Delta)^{\frac{\alpha+4}{2(\alpha+2)}}
		u_t^n(s)\|^{\alpha+2}\,ds
		\le C_R,
		\qquad
		t\in[0,T].
	\end{align}

	\medskip
	\paragraph{\it Third estimate}
	By Eq.~\eqref{P} and the continuous embedding
	$L^2(\Omega)\hookrightarrow H^{-1}(\Omega)$, we obtain
	\begin{align*}
		\|(-\Delta)^{-1/2}u_{tt}^n(t)\|
		\le
		C\left[\,
		\|\nabla u^n(t)\|
		+\|\nabla u_t^n(t)\|^{\alpha+1}
		+\|f(u^n(t))\|
		+\|h\|
		\,\right].
	\end{align*}
	Moreover, by the growth assumption \eqref{hyp_f'}, the Sobolev embedding
	$H^2(\Omega)\hookrightarrow L^\infty(\Omega)$, the embedding
	$L^\infty(\Omega)\hookrightarrow L^{10}(\Omega)$, and the elliptic estimate
	$\|u^n\|_{H^2}\le C\|\Delta u^n\|$, we obtain
	\begin{align*}
		\|f(u^n)\|
		\le
		C\left(1+\|u^n\|_{{10}}^5\right)\le
		C\left(1+\|u^n\|_{H^2}^5\right)\le
		C\left(1+\|\Delta u^n\|^5\right)
		\le
		C_R.
	\end{align*}
	Consequently,
	\begin{align}\label{Est3-1}
		\|(-\Delta)^{-1/2}u_{tt}^n(t)\|
		\le C_R,
		\qquad
		\forall\, t\in[0,T].
	\end{align}
	
	\subsection*{Passage to the Limit}
	By virtue of the uniform estimates \eqref{Est-1}, \eqref{strongy-final-2},
	and \eqref{Est3-1} we can extract a subsequence (still denoted by $n$) such that
	\begin{align}
		\left\{
		\begin{aligned}
			U^n=(u^n,u_t^n)
			&\rightharpoonup^\ast U=(u,u_t)
			&&\text{in }L^\infty(0,T;\mathcal H_1),\\
			u_{tt}^n
			&\rightharpoonup^\ast u_{tt}
			&&\text{in }L^\infty(0,T;H^{-1}(\Omega)),\\
			u_t^n
			&\rightharpoonup u_t
			&&\text{in }L^{\alpha+2}\bigl(0,T;
			D((-\Delta)^{\frac{\alpha+4}{2(\alpha+2)}})\bigr),\\
			f(u^n)
			&\rightharpoonup f(u)
			&&\text{in }L^2(0,T;L^2(\Omega)).
		\end{aligned}
		\right.
		\label{conv-1}
	\end{align}
	By the Aubin--Lions compactness lemma \cite{Lions},
	applied to $\{u^n\}$ with
	$
	H^2(\Omega)\cap H_0^1(\Omega)
	\hookrightarrow\hookrightarrow
	H_0^1(\Omega),
	$
	and to $\{u_t^n\}$ with
	$
	H_0^1(\Omega)
	\hookrightarrow\hookrightarrow
	L^2(\Omega)
	\hookrightarrow
	H^{-1}(\Omega),
	$
	we conclude that
	\begin{align*}
		\left\{
		\begin{aligned}
			u^n&\to u
			&&\text{strongly in } C([0,T];H_0^1(\Omega)),\\
			u_t^n&\to u_t
			&&\text{strongly in } C([0,T];L^2(\Omega)).
		\end{aligned}
		\right.
	\end{align*}
	Consequently,
	\begin{align*}
		U^n\to U
		\quad\text{strongly in }C([0,T];\mathcal H).
	\end{align*}
	Define the nonlinear operator
	$$
	B:H_0^1(\Omega)\to H^{-1}(\Omega)
	$$
	by
	$$
	\langle B(u_t),v_t\rangle
	=
	\|\nabla u_t\|^\alpha(\nabla u_t,\nabla v_t),
	\qquad
	u_t,v_t\in H_0^1(\Omega).
	$$
	Clearly, $B$ is hemicontinuous and
	\begin{align}\label{3.12}
		\|B(u_t)\|_{H^{-1}}
		&=
		\sup_{\|\nabla v_t\|\leq 1}
		\|\nabla u_t\|^\alpha
		(\nabla u_t,\nabla v_t) \notag\\
		&\leq
		\sup_{\|\nabla v_t\|\leq 1}
		\|\nabla u_t\|^\alpha
		\|\nabla u_t\|
		\|\nabla v_t\|
		\leq
		\|\nabla u_t\|^{\alpha+1}.
	\end{align}
	Hence, $B$ is bounded. Moreover, using the standard monotonicity property of
	the mapping $z\mapsto |z|^\alpha z$, there exists $C_\alpha>0$ such that
	\[
	\langle B(u_t)-B(v_t),u_t-v_t\rangle
	\geq
	C_\alpha
	\|\nabla(u_t-v_t)\|^{\alpha+2},
	\]
	for all $u_t,v_t\in H_0^1(\Omega)$. Therefore, $B$ is strongly monotone. Consequently, by the monotone operator theory (see, e.g.,
	Corollary 2.3 in \cite{Showalter}), we have
	$$
	B(u_t^n)\rightharpoonup B(u_t)
	\quad\text{in }H^{-1}(\Omega),
	\quad\text{for a.e. }t\in(0,T).
	$$
	Furthermore, by \eqref{3.12} and \eqref{Est-1}, we have
	$$
	\int_0^T
	\|B(u_t^n)\|_{H^{-1}}^{\frac{\alpha+2}{\alpha+1}}\,dt
	\leq
	\int_0^T
	\|\nabla u_t^n\|^{\alpha+2}\,dt
	\leq C_R.
	$$
	Hence,
	\begin{align}\label{conv-3}
		B(u_t^n)\rightharpoonup B(u_t)
		\quad\text{in}\quad
		L^{\frac{\alpha+2}{\alpha+1}}(0,T;H^{-1}(\Omega)).
	\end{align}
	Passing to the limit as $n\to+\infty$ in \eqref{approximate-problem}, using \eqref{conv-1}--\eqref{conv-3}, we conclude that the limit function $u$ is a strong solution of problem \eqref{P} with the regularity
	specified in \eqref{class-regular-solution}.
	
	\subsection*{Continuous Dependence and Uniqueness of Strong Solutions}
	Let $U^j=(u^j,u_t^j)$, $j=1,2$, be strong solutions of problem
	\eqref{P} corresponding to the initial data
	$U^j(0)\in\mathcal H_1\subset\mathcal H$, with
	$||U^j(0)||_{\mathcal H}\le R$, $j=1,2$.
	Defining
	$
	w=u^1-u^2,
	$ and $
	W=(w,w_t),$
	we find that $W$ satisfies
	\begin{eqnarray}\left\{\begin{array}{l}\label{3.14}
			w_{tt}-\Delta w
			-\|\nabla u_t^1\|^\alpha\Delta u_t^1
			+\|\nabla u_t^2\|^\alpha\Delta u_t^2
			+f(u^1)-f(u^2)=0,\\
			w=0\quad \mbox{on}\quad \Gamma,\qquad
			W(0)=U^1(0)-U^2(0).
		\end{array}\right.
	\end{eqnarray}
	Taking $w_t$ as a multiplier in \eqref{3.14}, we obtain
	\begin{align}\begin{split}\label{3.15}
			&\frac{1}{2}\frac{d}{dt}
			\left[\,
			\|W\|_{\mathcal H}^2
			+
			\int_\Omega\int_0^1
			f'(\theta u^1+(1-\theta)u^2)
			\,d\theta\, |w|^2\,dx
			\,\right]
			\\
			&\quad
			-\int_{\Omega}
			\left(
			\|\nabla u_t^1\|^{\alpha}\Delta u_t^1
			-
			\|\nabla u_t^2\|^{\alpha}\Delta u_t^2
			\right)w_t\,dx
			\\
			&=
			\frac12\int_\Omega\int_0^1
			f''(\theta u^1+(1-\theta)u^2)
			(\theta u_t^1+(1-\theta)u_t^2)
			\,d\theta\, |w|^2\,dx.
	\end{split}\end{align}
	Note that
	\begin{align*}
		-\int_{\Omega}
		\left(
		\|\nabla u_t^1\|^{\alpha}\Delta u_t^1
		-
		\|\nabla u_t^2\|^{\alpha}\Delta u_t^2
		\right)
		w_t dx
		&=
		-\frac12
		\left(
		\|\nabla u_t^1\|^{\alpha}
		+
		\|\nabla u_t^2\|^{\alpha}
		\right)
		\int_{\Omega}\Delta w_t\,w_t dx
		\\
		&\qquad
		-\frac12
		\left(
		\|\nabla u_t^1\|^{\alpha}
		-
		\|\nabla u_t^2\|^{\alpha}
		\right)
		\int_{\Omega}\Delta(u_t^1+u_t^2)w_t dx
		\\
		&=
		\frac12
		\left(
		\|\nabla u_t^1\|^{\alpha}
		+
		\|\nabla u_t^2\|^{\alpha}
		\right)
		\|\nabla w_t\|^2
		\\
		&\qquad
		+\frac12
		\left(
		\|\nabla u_t^1\|^{\alpha}
		-
		\|\nabla u_t^2\|^{\alpha}
		\right)
		\left(
		\|\nabla u_t^1\|^2
		-
		\|\nabla u_t^2\|^2
		\right)\ge 0.
	\end{align*}
	Moreover, using assumption \eqref{hyp_f''}, Hölder's inequality with
	$\frac12+\frac16+\frac13=1$, and the embedding
	$H_0^1(\Omega)\hookrightarrow L^6(\Omega)$, we obtain
	
	\begin{align*}
		&\frac12\int_\Omega\int_0^1
		f''(\theta u^1+(1-\theta)u^2)
		(\theta u_t^1+(1-\theta)u_t^2)
		\,d\theta\, |w|^2 dx
		\\
		\leq&
		C(1+\|u^1\|_{6}^3+\|u^2\|_{6}^3)
		(\|u_t^1\|_{6}+\|u_t^2\|_{6})
		\|w\|_{6}^2
		\\
		\leq
		&C(1+\|\nabla u^1\|^3+\|\nabla u^2\|^3)
		(\|\nabla u_t^1\|+\|\nabla u_t^2\|)
		\|\nabla w\|^2\leq
		C_R\psi(t)
		\|\nabla w\|^2,
	\end{align*}
	where $\psi(t):=(1+\|\nabla u_t^1\|^{\alpha+2}
	+\|\nabla u_t^2\|^{\alpha+2})$.
	Inserting above two estimates into \eqref{3.15} yields
	\begin{align}\label{3.16}
		\frac{d}{dt} \left[\,||W||^2_{\mathcal{H}}+ \int_\Omega\int_0^1f'(\theta u^1+(1-\theta)u^2)d\theta |w|^2 dx\right]\leq  C_R\psi(t)||W||^2_{\mathcal{H}}.
	\end{align}
	Now, we define
	\begin{align*}
		\mathcal{E}_W(t)=||W||^2_{\mathcal{H}}+ \int_\Omega\int_0^1f'(\theta u^1+(1-\theta)u^2)d\theta |w|^2 dx+K_f \| w\|^2.
	\end{align*}
	From Assumption \eqref{hyp_f'}, Remark \ref{lowerbounded},
	H\"older's inequality with
	$\frac23+\frac13=1$,
	and the embedding
	$H_0^1(\Omega)\hookrightarrow L^6(\Omega)$, we obtain
	\begin{align*}
		-\frac{K_f}{2}\|w\|^2
		\le
		\frac12
		\int_\Omega\int_0^1
		f'(\theta u^1+(1-\theta)u^2)\,d\theta\, |w|^2\,dx
		\leq
		C(1+\|u^1\|_{6}^4+\|u^2\|_{6}^4)\|w\|_{6}^2.
	\end{align*}
	Hence, using Poincaré's inequality and the embedding
	$H_0^1(\Omega)\hookrightarrow L^6(\Omega)$, we obtain
	\begin{align}\label{3.21}
		||W||_{\mathcal H}^2
		\leq
		\mathcal E_W(t)
		\leq
		C_R||W||_{\mathcal H}^2.
	\end{align}
	Therefore, by \eqref{3.21}, it follows from \eqref{3.16} that
	\begin{align}\begin{split}\label{3.22}
			\frac{d}{dt}\mathcal E_W(t)
			\leq
			C_R\psi(t)\mathcal E_W(t).
	\end{split}\end{align}
	Applying Gronwall's inequality to \eqref{3.22} and then using the equivalence of norms in \eqref{3.21}, we obtain
	\begin{align}\label{Lipschtz-1}
		||W(t)||_{\mathcal H}^2
		\leq
		C_{R,T}\,||W(0)||_{\mathcal H}^2,
		\qquad \forall\, t\in[0,T].
	\end{align}
	From \eqref{Lipschtz-1}, we immediately obtain the Lipschitz stability
	estimate \eqref{Lipschtz}. In particular, taking
	$U^1(0)=U^2(0)$ yields the uniqueness of strong solutions. This completes the proof of Theorem \ref{theo-global-solution} \textbf{(I)}.
	\qed
	
	\subsection*{Proof of Theorem \ref{theo-global-solution}-{\bf(II)}}
	The weak solution is obtained as the limit of strong solutions
	corresponding to initial data approximating $U_0$ in $\mathcal H$. Indeed,
	for each $U_0=(u_0,u_1)\in\mathcal H$, by the density of $\mathcal H_1$
	in $\mathcal H$, there exists a sequence
	$\{U_0^n=(u_0^n,u_1^n)\}\subset\mathcal H_1$ such that
	\begin{align*}
		U^n(0)=(u_0^n,u_1^n)\to U_0=(u_0,u_1)
		\quad\text{in }\mathcal H.
	\end{align*}
	By item \textbf{(I)} of Theorem \ref{theo-global-solution}, for each
	$n$, problem \eqref{P} admits a unique strong solution $U^n(t)$
	corresponding to the initial data $U^n(0)$. Moreover, by the Lipschitz
	estimate \eqref{Lipschtz},
	\begin{align*}
		\max_{t\in[0,T]}\|U^n(t)-U^m(t)\|_{\mathcal H}
		\leq
		C\bigl(\|U^n_0\|_{\mathcal H},\|U^m_0\|_{\mathcal H},T\bigr)
		\|U^n_0-U^m_0\|_{\mathcal H}
		\longrightarrow 0
	\end{align*}
	as $n,m\to\infty$. Hence, $\{U^n\}$ is a Cauchy sequence in
	$C([0,T];\mathcal H)$. Therefore, there exists
	$U\in C([0,T];\mathcal H)$ such that
	\begin{align*}
		\lim_{n\to\infty}
		\max_{t\in[0,T]}\|U^n(t)-U(t)\|_{\mathcal H}=0.
	\end{align*}
	In particular,
	\begin{align*}
		u^n\to u \quad\text{a.e. in }Q_T,
		\qquad
		f(u^n)\to f(u)\quad\text{a.e. in }Q_T,
	\end{align*}
	where
	$
	Q_T:=\Omega\times(0,T).
	$
	From assumption \ref{Assumption} and the embeddings
	$H_0^1(\Omega)\hookrightarrow L^6(\Omega)$, we obtain
	\begin{align}\label{3.27}
		\int_0^T\int_\Omega |f(u^n)|^{6/5}\,dx\,dt
		\leq
		C\int_0^T\int_\Omega
		\left(1+|u^n|^6\right)\,dx\,dt
		\leq C_T.
	\end{align}
	Thus,
	\begin{align}\label{3.28}
		\{f(u^n)\}_{n\in\mathbb N}
		\quad\text{is bounded in }L^{6/5}(Q_T).
	\end{align}
	Consequently, by \eqref{3.27} and \eqref{3.28},
	\begin{align*}
		f(u^n)\rightharpoonup f(u)
		\quad\text{weakly in }L^{6/5}(Q_T).
	\end{align*}
	Therefore,
	\begin{align}\label{3.30}
		\int_0^T\int_\Omega f(u^n)\psi\,dx\,dt
		\longrightarrow
		\int_0^T\int_\Omega f(u)\psi\,dx\,dt,
		\qquad
		\forall\,\psi\in L^6(Q_T).
	\end{align}
	Since $U^n$ is a strong solution of \eqref{P}, for every
	\begin{align*}
		\psi\in C([0,T];H_0^1(\Omega))
		\cap C^1([0,T];L^2(\Omega))
		\cap L^6(Q_T),
	\end{align*}
	we have
	\begin{align*}
		(u_t^n(t),\psi(t))
		&+\int_0^t
		\Big[
		(\nabla u^n,\nabla\psi)
		+(f(u^n),\psi)
		+\|\nabla u_t^n\|^\alpha
		(\nabla u_t^n,\nabla\psi)
		\Big]\,d\tau\\
		&=(u_1^n,\psi(0))
		+\int_0^t(h,\psi)\,d\tau,
		\qquad t\in[0,T].
	\end{align*}
	Letting $n\to\infty$, we conclude that the limiting function $u$ is a weak solution of problem
	\eqref{P}. It remains to prove uniqueness of weak solutions. Let
	\[
	U_0^1=(u_0^1,u_1^1),\qquad
	U_0^2=(u_0^2,u_1^2)\in\mathcal H
	\]
	be arbitrary initial data. Since $\mathcal H_1$ is dense in $\mathcal H$,
	there exist sequences
	\[
	U_0^{i,n}=(u_0^{i,n},u_1^{i,n})\in\mathcal H_1,
	\qquad i=1,2,
	\]
	such that
	\begin{align}\label{3.29}
		(U_0^{1,n},U_0^{2,n})
		\longrightarrow
		(U_0^1,U_0^2)
		\quad\text{strongly in }\mathcal H\times\mathcal H.
	\end{align}
	Let $U^{i,n}(t)$ be the corresponding strong solutions. By the convergence
	established above,
	\begin{align*}
		(U^{1,n},U^{2,n})
		\longrightarrow
		(U^1,U^2)
		\quad\text{strongly in }
		C([0,T];\mathcal H\times\mathcal H),
	\end{align*}
	where $U^i(t)$ is the weak solution corresponding to $U_0^i$.
	
	Since the Lipschitz estimate \eqref{Lipschtz} holds for strong solutions, we have
	\begin{align}\label{3.31}
		\|U^{1,n}(t)-U^{2,n}(t)\|_{\mathcal H}^2
		\leq
		C\bigl(\|U_0^{1,n}\|_{\mathcal H},
		\|U_0^{2,n}\|_{\mathcal H},T\bigr)
		\|U_0^{1,n}-U_0^{2,n}\|_{\mathcal H}^2,
		\qquad t\in[0,T].
	\end{align}
	Since the sequences of initial data are bounded in $\mathcal H$, the constant
	on the right-hand side of \eqref{3.31} can be chosen independently of $n$.
	Passing to the limit as $n\to\infty$ in \eqref{3.31}, and using
	\eqref{3.29}--\eqref{3.30}, we obtain
	\begin{align*}
		\|U^1(t)-U^2(t)\|_{\mathcal H}^2
		\leq
		C\bigl(\|U_0^1\|_{\mathcal H},
		\|U_0^2\|_{\mathcal H},T\bigr)
		\|U_0^1-U_0^2\|_{\mathcal H}^2,
		\qquad t\in[0,T].
	\end{align*}
	Thus, the Lipschitz estimate \eqref{Lipschtz} extends to weak solutions.
	In particular, if $U_0^1=U_0^2$, then $U^1(t)=U^2(t)$ for all
	$t\in[0,T]$. Hence, the weak solution is unique, completing the proof of Theorem \ref{theo-global-solution}-\textbf{(II)}.
	
	\subsection*{Proof of Theorem \ref{theo-global-solution}-{\bf(III)}}
	The result follows immediately from the estimate \eqref{Lipschtz-1}. 
	
	\subsection*{Proof of Theorem \ref{theo-global-solution}-{\bf(IV)}}
	We establish the energy identity by  the Friedrichs mollifier with a time cut-off. By definition \eqref{3.1}, a weak solution $u$ satisfies
	\[
	U\in L^\infty(0,T;\mathcal H),\qquad u_t\in L^{\alpha+2}(0,T;H_0^1(\Omega)),
	\]
	and for every $\psi\in C(0,T;H_0^1)\cap C^1(0,T;L^2)\cap L^6(Q_T)$,
	\begin{align}\label{ws}
		\int_0^T (u_t,\psi_t)\,dt+\int_0^T(\nabla u,\nabla\psi)\,dt
		+\int_0^T\|\nabla u_t(t)\|^\alpha(\nabla u_t,\nabla\psi)\,dt
		+\int_0^T(f(u),\psi)\,dt
		=\int_0^T(h,\psi)\,dt.
	\end{align}
	Let $\rho\in C_c^\infty(-1,1)$ be a standard mollifier with $\int_{\mathbb R}\rho=1$,  $\rho_\varepsilon(s)=\varepsilon^{-1}\rho(s/\varepsilon)$ and  $\eta_\varepsilon\in C_c^\infty(0,T)$ be a cut-off function such that 
	\begin{align*}\eta_\varepsilon(t)=
		\begin{cases}
			0, & t\in[0,\varepsilon/2]\cup[T-\varepsilon/2,T],\\
			1, & t\in[\varepsilon,T-\varepsilon].
	\end{cases}\end{align*}
	Define the test function
	\[
	\psi_\varepsilon(t):=\eta_\varepsilon(t)(\rho_\varepsilon*\widetilde{u_t})(t),
	\]
	where $\widetilde{u_t}$ denotes the zero extension of $u_t$ to $\mathbb R$. Then $\psi_\varepsilon\in C_c^\infty(0,T;H_0^1(\Omega))\subset C(0,T;H_0^1)\cap C^1(0,T;L^2)\cap L^6(Q_T)$, so it is admissible in \eqref{ws}.

	Substituting $\psi_\varepsilon$ into \eqref{ws}, we have
	\begin{align}\label{ws1}
		I_1(\varepsilon)+I_2(\varepsilon)+I_3(\varepsilon)+I_4(\varepsilon)=I_5(\varepsilon),
	\end{align}
	where the $I_j$ correspond to the five terms in \eqref{ws}.
	
	\medskip\noindent\textit{Term $I_1$.} 
	Set $\phi_\varepsilon(t) = \big(u_t(t),\psi_\varepsilon(t)\big)_{\mathcal H}$, then by the definition of $\psi_\varepsilon(t)$, we have
	\begin{align*}\psi_\varepsilon(0)=\psi_\varepsilon(T)=0, \quad \phi_\varepsilon(0)=\phi_\varepsilon(T)=0.\end{align*}
	
	Differentiating $\phi_\varepsilon(t)$ and integrating the result over $[0,T]$ yields
	\[0 = \phi_\varepsilon(T)-\phi_\varepsilon(0)=\int_0^T \langle u_{tt},\psi_\varepsilon\rangle_{H, H^{-1}}dt + \int_0^T \big(u_t,\partial_t\psi_\varepsilon\big)dt,\]
	hence,
	\[I_1(\varepsilon) = -\int_0^T \langle u_{tt},\psi_\varepsilon\rangle_{ H, H^{-1}}dt.\]
	Because $u_t\in L^{\alpha+2}(0,T;H_0^1)$ and $\eta_\varepsilon\to1$ as $\varepsilon\to0$,
	\begin{align*}
		\|\psi_\varepsilon - u_t\|_{L^{\alpha+2}(0,T;H_0^1)}
		&= \big\|\eta_\varepsilon(\rho_\varepsilon*\widetilde{u}_t) - u_t\big\|_{L^{\alpha+2}(0,T;H_0^1)}\\
		&\le \big\|\eta_\varepsilon(\rho_\varepsilon*\widetilde{u}_t - \widetilde{u}_t)\big\|_{L^{\alpha+2}(0,T;H_0^1)} + \big\|(\eta_\varepsilon-1)u_t\big\|_{L^{\alpha+2}(0,T;H_0^1)}\rightarrow0\quad \hbox{as}\quad \varepsilon\to0.
	\end{align*}
	Define \[L(w) = \int_0^T \langle u_{tt},w \rangle_{H,H^{-1}} dt,\quad w\in L^{\alpha+2}(0,T;H_0^1),\] then
	\[|L(w)| \le \|u_{tt}\|_{L^{\frac{\alpha+2}{\alpha+1}}(0,T;H^{-1})}\|w\|_{L^{\alpha+2}(0,T;H_0^1)},\]
	which means $L(w)$ is bounded linear functional on $L^{\alpha+2}(0,T;H_0^1)$. Therefore,
	\[\lim_{\varepsilon\to0}\int_0^T\langle u_{tt},\psi_\varepsilon\rangle _{H, H^{-1}}dt = \int_0^T\langle u_{tt},u_t\rangle_{H, H^{-1}}dt=\frac12\big(\|u_t(T)\|_{\mathcal H}^2 - \|u_t(0)\|_{\mathcal H}^2\big).\]
	Combining all identities,
	\begin{align*}
		\lim_{\varepsilon\to0}I_1(\varepsilon) = \frac12\big(\|u_t(0)\|_{\mathcal H}^2 - \|u_t(T)\|_{\mathcal H}^2\big).
	\end{align*}
	
	\medskip\noindent\textit{Term $I_2$.}
	Since $\nabla\psi_\varepsilon\to\nabla u_t$ in $L^2(Q_T)$, we get
	\[
	\lim_{\varepsilon\to0}I_2(\varepsilon)=\int_0^T(\nabla u,\nabla u_t)\,dt
	=\frac12\|\nabla u(T)\|^2-\frac12\|\nabla u(0)\|^2.
	\]
	
	\medskip\noindent\textit{Term $I_3$.}
	Let $\lambda(t)=\|\nabla u_t(t)\|^\alpha$. Since $u_t\in L^{\alpha+2}(0,T;H_0^1)$, we have
	\[
	\lambda\in L^{(\alpha+2)/\alpha}(0,T),\qquad \nabla u_t\in L^{\alpha+2}(Q_T).
	\]
	Moreover, $\nabla\psi_\varepsilon\to\nabla u_t$ in $L^{\alpha+2}(Q_T)$. By H\"older's inequality,
	\[
	\int_0^T \lambda(t) \bigl(\nabla u_t, \nabla \psi_\varepsilon - \nabla u_t\bigr) dt
	\le
	\left( \int_0^T \|\nabla u_t\|^{\alpha+2} dt \right)^{\frac{\alpha+1}{\alpha+2}}
	\left( \int_0^T \|\nabla \psi_\varepsilon - \nabla u_t\|^{\alpha+2} dt \right)^{\frac{1}{\alpha+2}}
	\rightarrow 0.
	\]
	Hence
	\[
	\lim_{\varepsilon\to0}I_3(\varepsilon)=\int_0^T\lambda(t)\|\nabla u_t(t)\|^2\,dt
	=\int_0^T\|\nabla u_t(t)\|^{\alpha+2}\,dt.
	\]
	
	\medskip\noindent\textit{Term $I_4$.} By assumption \eqref{hyp_f'} and Sobolev embedding $H_0^1(\Omega)\hookrightarrow L^6(\Omega)$,
	\[\|f(u(t))\|_{{6/5}} \le C\left(1+\|u(t)\|_{6}^5\right) \le C\left(1+\|\nabla u(t)\|^5\right)\le C_R,\quad \text{a.e. }t\in(0,T).\]
	Thus,
	\[\left|\int_0^T \big(f(u),\psi_\varepsilon - u_t\big)dt\right|
	\le \|f(u)\|_{L^\infty(0,T; L^{6/5})} \int_0^T \|\psi_\varepsilon - u_t\|_{L^6}dt\leq C(R,T)\|\psi_\varepsilon - u_t\|_{L^{\alpha+2}(0,T;H_0^1)}\rightarrow0 .\]
	Consequently,
	$$
	\lim_{\varepsilon\to0}I_4(\varepsilon)=\int_0^T(f(u),u_t)\,dt
	=\int_\Omega F(u(T))\,dx-\int_\Omega F(u_0)\,dx.
	$$
	
	\medskip\noindent\textit{Term $I_5$.}
	Since $h\in L^2$ and $\psi_\varepsilon\to u_t$ in $L^2(Q_T)$,
	$$
	\lim_{\varepsilon\to0}I_5(\varepsilon)=\int_0^T(h,u_t)\,dt.
	$$
	Finally, letting $\varepsilon\to0$ in \eqref{ws1} and using the estimates
	$I_1$-$I_5$, we obtain the energy identity \eqref{ei}. This completes the proof of Theorem \ref{theo-global-solution}.
	\qed

	\section{Global Attractor}
	The well-posedness result established in Theorem \ref{theo-global-solution} defines, for each $\alpha\in \mathbb{R}_+$, a continuous dynamical system on the phase space $\mathcal H$. More precisely, the associated evolution operator
	$S^\alpha(t):\mathcal H\to\mathcal H$ is given by
	\begin{align*}
		S^\alpha(t)(u_0,u_1)=(u(t),u_t(t)),
		\qquad t\ge0,
	\end{align*}
	where $U=(u,u_t)$ denotes the weak solution of problem \eqref{P} corresponding to the initial data $U_0=(u_0,u_1)$. Throughout the remainder of the paper, we study the long-time dynamics of the family of dynamical systems
	$(\mathcal H,S^\alpha(t))$.
	\subsection{Preliminaries on Global Attractor Theory}
	Let $(X,S(t))$ be a dynamical system. We recall the following standard
	definitions (see, e.g., \cite{Chueshov,chueshov,chueshov-yellow}).
	
	A closed set $B\subset X$ is said to be absorbing for $(X,S(t))$
	if, for every bounded set $D\subset X$, there exists $t_0=t_0(D)\ge0$
	such that
	$$
	S(t)D\subset B,\qquad t\ge t_0(D).
	$$
	The dynamical system $(X,S(t))$ is said to be dissipative if it
	possesses a bounded absorbing set $B$. If $X$ is a Banach space, a
	number $R>0$ is called a radius of dissipativity if
	$$
	B\subset\{x\in X:\|x\|_X\le R\}.
	$$
	
	The dynamical system $(X,S(t))$ is said to be asymptotically
	smooth if, for any bounded set $D\subset X$ such that
	$S(t)D\subset D$ for all $t>0$, there exists a compact set
	$K\subset\overline{D}$ such that
	$$
	\lim_{t\to+\infty}d_X\{S(t)D\mid K\}=0,
	$$
	where
	$$
	d_X\{A\mid B\}:=
	\sup_{x\in A}\operatorname{dist}_X(x,B)
	$$
	denotes the Hausdorff semidistance.
	
	A \textbf{global attractor} for a dynamical system $(X,S(t))$ on a
	complete metric space $X$ is a closed and bounded set $\mathcal{A}\subset X$
	that is invariant, i.e.,
	$$
	S(t)\mathcal{A}=\mathcal{A}
	\quad\text{for every }t>0,
	$$
	and uniformly attracting, i.e.,
	$$
	\lim_{t\to+\infty}\sup_{y\in B}
	\operatorname{dist}_X(S(t)y,\mathcal{A})=0
	\quad\text{for every bounded set }B\subset X.
	$$
	
	The dissipativity and asymptotic smoothness properties provide the
	standard framework for establishing the existence of a compact global
	attractor. In particular, we shall use the following criterion.
	
	\begin{theorem}\cite[Theorem 2.3]{Chueshov}\label{existence}
		Let $(X,S(t))$ be a dissipative dynamical system on a complete metric
		space $X$. Then $(X,S(t))$ possesses a compact global attractor
		$\mathcal{A}$ if and only if $(X,S(t))$ is asymptotically smooth.
	\end{theorem}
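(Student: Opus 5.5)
The plan is to prove the two implications separately. In both directions the central object is the $\omega$-limit set
$$
\omega(D)=\bigcap_{s\ge0}\overline{\bigcup_{t\ge s}S(t)D}
=\{y\in X:\ y=\lim_n S(t_n)x_n \text{ for some } x_n\in D,\ t_n\to\infty\}.
$$
Both directions rest on one auxiliary fact, which I will prove first: if a compact set $K$ attracts a set $D$, then every sequence $S(t_n)x_n$ with $x_n\in D$ and $t_n\to\infty$ is relatively compact. To see this, pick $k_n\in K$ with $\operatorname{dist}_X(S(t_n)x_n,k_n)\to0$. A subsequence of $(k_n)$ converges in $K$, and the corresponding subsequence of $S(t_n)x_n$ converges to the same limit.

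\emph{Necessity.} Suppose a compact global attractor $\mathcal A$ exists, and let $D$ be bounded with $S(t)D\subset D$ for all $t>0$. I take $K:=\omega(D)$. By the auxiliary fact applied with the compact set $\mathcal A$, every sequence $S(t_n)x_n$ is relatively compact and its limit points lie in $\mathcal A$. Hence $\omega(D)$ is nonempty and is a closed subset of $\mathcal A$, so it is compact. Forward invariance gives $\bigcup_{t\ge s}S(t)D\subset D$, hence $\omega(D)\subset\overline D$. To show that $\omega(D)$ attracts $D$, I argue by contradiction. If not, there are $\varepsilon>0$, $x_n\in D$ and $t_n\to\infty$ with $\operatorname{dist}_X(S(t_n)x_n,\omega(D))\ge\varepsilon$. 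By relative compactness a subsequence converges, and its limit belongs to $\omega(D)$, which is impossible.

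\emph{Sufficiency.} Let $B$ be a bounded absorbing set and set $t_0=t_0(B)$. Define
$$
D:=\overline{\bigcup_{t\ge t_0}S(t)B}.
$$
Since $S(t)B\subset B$ for $t\ge t_0$ and $B$ is closed, we have $D\subset B$, so $D$ is bounded. By the continuity of each $S(\tau)$ and the semigroup property, $S(\tau)D\subset D$ for every $\tau>0$. Asymptotic smoothness then yields a compact $K\subset\overline D$ attracting $D$. I set $\mathcal A:=\omega(D)$.

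Exactly as in the necessity part, the auxiliary fact (now with $K$) shows that $\mathcal A$ is nonempty, compact (it is closed and contained in $K$) and attracts $D$. Now let $E$ be any bounded set. It is absorbed by $B$ at some time $t_1$, so $S(t_1+t_0+t)E\subset S(t_0+t)B\subset D$. Therefore $\mathcal A$ attracts $E$, since $\operatorname{dist}_X(S(s)y,\mathcal A)\le\sup_{z\in D}\operatorname{dist}_X(S(s-t_1-t_0)z,\mathcal A)$ for $s\ge t_1+t_0$ and $y\in E$.

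The main step, and the one requiring care, is the invariance $S(t)\mathcal A=\mathcal A$. The inclusion $S(t)\mathcal A\subset\mathcal A$ follows from continuity of $S(t)$: if $y=\lim S(t_n)x_n$, then $S(t)y=\lim S(t+t_n)x_n\in\omega(D)$. For the reverse inclusion, take $y=\lim S(t_n)x_n$ and consider the shifted sequence $S(t_n-t)x_n$. This sequence is relatively compact by the auxiliary fact, so it has a limit point $z\in\omega(D)$. Continuity of $S(t)$ gives $S(t)z=y$, hence $\mathcal A\subset S(t)\mathcal A$. Finally, $\mathcal A$ is compact and therefore closed and bounded, so it is the compact global attractor.
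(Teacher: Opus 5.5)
Your proof is correct. It uses the auxiliary compactness fact, the choice $K=\omega(D)\subset\overline D$ for necessity, and, for sufficiency, $\mathcal A=\omega(D)$ with $D=\overline{\bigcup_{t\ge t_0}S(t)B}$, getting two-sided invariance from the shifted sequences $S(t_n-t)x_n$; all of this needs only the continuity of each $S(t)$ and the semigroup property. The paper gives no proof of its own here and simply quotes the result from Chueshov's monograph, where it rests on this same standard $\omega$-limit-set argument applied to the forward-invariant hull of a bounded absorbing set.
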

	
	Let $\mathcal{N}$ denote the set of stationary points of the dynamical
	system $(X,S(t))$, that is,
	$$
	\mathcal{N}
	=
	\{v\in X:S(t)v=v\quad\text{for all }t\ge0\}.
	$$
	The unstable manifold emanating from $\mathcal{N}$ is defined by
	$$
	\mathcal{M}^u(\mathcal{N})
	=
	\left\{
	y\in X:
	\begin{array}{l}
		\text{there exists a full trajectory }\{u(t):t\in\mathbb{R}\}
		\text{ such that}\\
		u(0)=y\text{ and }
		\operatorname{dist}_X(u(t),\mathcal{N})\to0
		\text{ as }t\to-\infty
	\end{array}
	\right\}.
	$$
	
	A dynamical system $(X,S(t))$ is said to be \textbf{gradient} if it
	possesses a strict Lyapunov function, i.e., there exists a continuous
	functional $\Phi:X\to\mathbb{R}$ such that
	\begin{itemize}
		\item the function $t\mapsto\Phi(S(t)y)$ is nonincreasing for every
		$y\in X$; and
		\item if $\Phi(S(t)y)=\Phi(y)$ for all $t>0$ and some $y\in X$, then
		$S(t)y=y$ for all $t>0$, i.e., $y$ is a stationary point of the
		dynamical system $(X,S(t))$.
	\end{itemize}

	When the dynamical system is gradient, the global attractor admits a
	more precise characterization in terms of the stationary points. In
	particular, it is generated by the unstable manifold of the set of
	stationary points.
	
	\begin{theorem}[{\cite[Theorem 7.5.6]{chueshov-yellow}}]\label{theo-carct}
		Let $(X,S(t))$ be a dynamical system possessing a compact global
		attractor $\mathfrak{A}$. Assume that $(X,S(t))$ admits a strict
		Lyapunov function on $X$. Then
		$$
		\mathfrak{A}=\mathcal{M}^u(\mathcal{N}).
		$$
		Moreover, the global attractor $\mathfrak{A}$ consists of all full
		trajectories $\Upsilon=\{u(t):t\in\mathbb{R}\}$ satisfying
		$$
		\lim_{t\to-\infty}\operatorname{dist}_X(u(t),\mathcal{N})=0
		\quad\text{and}\quad
		\lim_{t\to+\infty}\operatorname{dist}_X(u(t),\mathcal{N})=0.
		$$
	\end{theorem}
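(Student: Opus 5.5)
We argue directly from the definitions. The idea is to compare each full trajectory in $\mathfrak A$ with its $\alpha$- and $\omega$-limit sets, and to use the strict Lyapunov function $\Phi$ to force these limit sets into $\mathcal N$. The argument proves both inclusions $\mathcal M^u(\mathcal N)\subset\mathfrak A$ and $\mathfrak A\subset\mathcal M^u(\mathcal N)$, together with the two-sided convergence.

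\emph{Step 1: $\mathcal N\subset\mathfrak A$ and $\mathcal M^u(\mathcal N)\subset\mathfrak A$.}
Let $v\in\mathcal N$. The singleton $\{v\}$ is bounded and invariant, so
\[
\operatorname{dist}(v,\mathfrak A)=\operatorname{dist}(S(t)v,\mathfrak A)\to0 .
\]
Since $\mathfrak A$ is closed, $v\in\mathfrak A$.

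Now let $y\in\mathcal M^u(\mathcal N)$, with full trajectory $\{u(t):t\in\mathbb R\}$ satisfying $u(0)=y$.
\begin{itemize}
\item Since $\operatorname{dist}(u(t),\mathcal N)\to0$ as $t\to-\infty$ and $\mathcal N\subset\mathfrak A$ is bounded, there is $T_0$ such that $\{u(t):t\le -T_0\}$ is bounded.
\item The set $\{u(t):t\in[-T_0,0]\}$ is the continuous image of a compact interval, hence bounded.
\item Therefore $B:=\{u(t):t\le 0\}$ is bounded.
\end{itemize}
For every $s\ge0$ we have $y=S(s)u(-s)\in S(s)B$. Hence
\[
\operatorname{dist}(y,\mathfrak A)\le \sup_{x\in B}\operatorname{dist}(S(s)x,\mathfrak A)\to0 \quad (s\to\infty),
\]
and so $y\in\mathfrak A$.

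\emph{Step 2: full trajectories through points of $\mathfrak A$.}
Let $y\in\mathfrak A$. Since $S(t)\mathfrak A=\mathfrak A$, we can choose $y_{-1}\in\mathfrak A$ with $S(1)y_{-1}=y$. Iterating backward gives a full trajectory $\{u(t):t\in\mathbb R\}\subset\mathfrak A$ with $u(0)=y$.

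Because $\mathfrak A$ is compact, the following hold.
\begin{itemize}
\item The $\alpha$-limit set
\[
\alpha(u)=\{z:\ u(t_n)\to z \text{ for some } t_n\to-\infty\}
\]
is nonempty, compact, and invariant. The invariance follows from continuity of $S(t)$.
\item The $\omega$-limit set $\omega(u)$ is likewise nonempty, compact, and invariant.
\end{itemize}
The function $t\mapsto\Phi(u(t))$ is nonincreasing on all of $\mathbb R$, since $u(t)=S(t-s)u(s)$ for $s\le t$. It is also bounded, because $\Phi$ is continuous on the compact set $\mathfrak A$. Hence the limits $\Phi_\mp=\lim_{t\to\mp\infty}\Phi(u(t))$ exist, and by continuity $\Phi\equiv\Phi_-$ on $\alpha(u)$.

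Take $z\in\alpha(u)$. By invariance, $S(t)z\in\alpha(u)$ for all $t\ge0$, so $\Phi(S(t)z)=\Phi_-=\Phi(z)$ for all $t>0$. The strictness of $\Phi$ then gives $z\in\mathcal N$. Thus $\alpha(u)\subset\mathcal N$, and the same argument gives $\omega(u)\subset\mathcal N$.

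\emph{Step 3: convergence to $\mathcal N$.}
Suppose $\operatorname{dist}(u(t),\mathcal N)\not\to0$ as $t\to-\infty$. Then there are $t_n\to-\infty$ and $\delta>0$ with
\[
\operatorname{dist}(u(t_n),\mathcal N)\ge\delta \quad\text{for all } n .
\]
By compactness of $\mathfrak A$, a subsequence of $u(t_n)$ converges to some $z\in\alpha(u)\subset\mathcal N$. This contradicts the bound above. The same argument works as $t\to+\infty$.

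Consequently $y\in\mathcal M^u(\mathcal N)$, which gives $\mathfrak A=\mathcal M^u(\mathcal N)$. Every full trajectory through a point of $\mathfrak A$ converges to $\mathcal N$ at both ends. Conversely, any full trajectory with these two limits lies in $\mathfrak A$ by Step 1.

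The main obstacle is Step 2. The key point there is that invariance of the limit sets, combined with the constancy of $\Phi$ on them, is exactly what allows the strictness of $\Phi$ to be applied. This step requires the trajectory to lie in a compact set, which is why the compactness of $\mathfrak A$ is essential.
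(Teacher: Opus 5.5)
Your proof is correct and is essentially the standard argument behind the cited result; the paper gives no proof of its own and only refers to Chueshov--Lasiecka. In that argument, full trajectories through points of the compact attractor have nonempty, forward-invariant $\alpha$- and $\omega$-limit sets on which the Lyapunov function is constant, so strictness forces these sets into $\mathcal N$, while $\mathcal M^u(\mathcal N)\subset\mathfrak A$ holds because such trajectories are bounded backward in time and hence uniformly attracted (the boundedness of $\{u(t):t\in[-T_0,0]\}$ in your Step 1 is not even needed, since $y=S(s)u(-s)$ with $s\ge T_0$ suffices).
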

	
	\subsection{Statements of Main Result}
	We are now in a position to state the main result of this paper.
	\begin{theorem}{\bf[Global attractors]}\label{theo-main}
		Under Assumption
		\ref{Assumption}, for every
		$\alpha\in\mathbb{R}_+$, the dynamical system
		$(\mathcal{H},S^{\alpha}(t))$ possesses a compact global attractor
		$\mathcal{A}_{\alpha}$ given by
		$$
		\mathcal{A}_{\alpha}
		=
		\mathcal{M}^{u}_{\alpha}(\mathcal{N}),
		$$
		where $\mathcal{N}$ denotes the set of stationary points and
		$\mathcal{M}^{u}_{\alpha}(\mathcal{N})$ is the unstable manifold
		emanating from $\mathcal{N}$.
	\end{theorem}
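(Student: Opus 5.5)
The proof combines Theorem \ref{existence} with Theorem \ref{theo-carct}. This requires three properties of $(\mathcal H,S^\alpha(t))$: it is gradient, it is dissipative, and it is asymptotically smooth. We take the full energy $\Phi=E_U$ from \eqref{energy-functional} as the candidate strict Lyapunov function.

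\textbf{Gradient structure.} Continuity of $E_U$ on $\mathcal H$ follows from \eqref{hyp_f'} and $H_0^1\hookrightarrow L^6$. Monotonicity in time follows from the energy identity \eqref{ei}. If $E_U(t)=E_U(0)$ for all $t>0$, then \eqref{ei} gives $\int_0^t\|\nabla u_t\|^{\alpha+2}d\tau=0$, so $u_t\equiv0$ and $u(t)=u_0$. Hence $U(t)=(u_0,0)$ is stationary.

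\textbf{Dissipativity.} By \eqref{Est1-2}, $E_U$ is bounded below by $\frac{\omega}{4}\|U\|_{\mathcal H}^2-C$, and it is bounded above on bounded sets. For a gradient system this gives:
\begin{itemize}
\item boundedness of $\mathcal N$, since stationary solutions satisfy $-\Delta u+f(u)=h$, and testing with $u$ together with \eqref{2.4} bounds $\|\nabla u\|$;
\item bounded orbits of bounded sets.
\end{itemize}
By the standard result for gradient systems (Chueshov--Lasiecka, Corollary 7.5.7), asymptotic smoothness then yields both dissipativity and the attractor $\mathcal M^u_\alpha(\mathcal N)$. So we do not need an explicit absorbing-ball computation. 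If a direct absorbing set is preferred, we would use the perturbed functional $E_U+\epsilon(u_t,u)$. The difficulty there is the degenerate damping $\|\nabla u_t\|^{\alpha+2}$ for large $\alpha$, which only controls $\|\nabla u_t\|^2$ up to a constant; this gives absorption with a non-exponential rate, which is still sufficient.

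\textbf{Asymptotic smoothness (the main obstacle).} We use the criterion of Chueshov--Lasiecka based on a contractive function. Let $D$ be bounded and forward invariant, and let $U^1,U^2$ be solutions in $D$ with difference $w$. Multiplying the difference equation \eqref{3.14} by $w_t$ and by $w$ gives a stabilization estimate of the form
\begin{equation*}
\|W(T)\|_{\mathcal H}^2\le \varepsilon+\Psi_{\varepsilon,T}(U^1,U^2),
\end{equation*}
where $\Psi$ collects the following terms:
\begin{itemize}
\item $\int_0^T\!\!\int_\Omega (f(u^1)-f(u^2))w_t$, rewritten as a time derivative plus $\int f''\,u_t\,|w|^2$;
\item lower-order terms such as $\|w\|^2$, which are compact.
\end{itemize}

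The damping difference is monotone. By the computation after \eqref{3.15}, it controls $\frac12(\|\nabla u^1_t\|^\alpha+\|\nabla u^2_t\|^\alpha)\|\nabla w_t\|^2$. For $\alpha>0$ this coefficient may degenerate. To handle this we use the strong monotonicity $\langle B(u^1_t)-B(u^2_t),w_t\rangle\ge C_\alpha\|\nabla w_t\|^{\alpha+2}$ together with Young's inequality. This gives $\|\nabla w_t\|^2\le\varepsilon+C_\varepsilon\|\nabla w_t\|^{\alpha+2}$, and the $\varepsilon$ in the estimate above comes from this step.

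The critical quintic term is the delicate point. We would rely on two facts:
\begin{itemize}
\item $\int_0^T\|\nabla u_t^i\|^{\alpha+2}$ is bounded on $D$ by the energy identity;
\item $u_t^i\in L^{\alpha+2}(0,T;H_0^1)\subset L^1(0,T;L^6)$.
\end{itemize}
With these, $\int_0^T\!\!\int f''(\cdot)u_t|w|^2$ can be handled by the Ball-type / compensated compactness argument: along sequences $U^n$ in $D$, the weak limits together with $w_n\to0$ strongly in $C([0,T];L^{6-\delta})$ (by Aubin--Lions) show that $\liminf_n\liminf_m\Psi=0$. The truly critical piece $\int f'(u)|w|^2$ at exponent exactly $6$ is the hardest part. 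For it, we first try the splitting $f'(s)=f'_{\le N}(s)+f'_{>N}(s)$ together with uniform $L^1_tL^6_x$ smallness of $u_t$ on short subintervals.

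Having the contractive estimate gives asymptotic smoothness, and Theorems \ref{existence} and \ref{theo-carct} then yield $\mathcal A_\alpha=\mathcal M^u_\alpha(\mathcal N)$.
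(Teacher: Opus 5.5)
You establish the gradient structure exactly as in Proposition~\ref{gds}. Deriving dissipativity from it (bounded $\mathcal N$, $E_U$ bounded above on bounded sets and coercive, Chueshov--Lasiecka's Corollary 7.5.7) is a legitimate alternative to the paper's explicit absorbing ball via Nakao's lemma in Proposition~\ref{Prop-absorbing-set}. But both routes rest on asymptotic smoothness, and you do not carry that step out; the mechanism you propose for it does not close.

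\textbf{The genuine gap: the critical term $J$.} Write $J=\frac12\int_\Omega\int_0^1 f''(\theta u^1+(1-\theta)u^2)(\theta u^1_t+(1-\theta)u^2_t)\,d\theta\,|w|^2dx$. Its H\"older balance is exact: $f''(u)\in L^\infty_tL^2_x$, $u_t\in L^{\alpha+2}_tL^6_x$ and $|w|^2\in L^\infty_tL^3_x$, with $\frac12+\frac16+\frac13=1$. So $f''(u)u_t$ lies only in $L^{3/2}_x$, and convergence $u_n-u_m\to0$ in $C([0,T];L^{6-\delta})$ gives no information on $\int_0^T\!\int_\Omega f''u_t|u_n-u_m|^2$. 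The Ball-type or compensated-compactness limit you invoke therefore does not follow from the convergences you list. Using only boundedness of $D$ leaves a contribution $C_D\int_0^T(\|\nabla u^1_t\|+\|\nabla u^2_t\|)\,dt$, which is not small.

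By contrast, the term you call the hardest, $\int_\Omega\int_0^1 f'(\cdot)\,d\theta\,|w|^2dx$, is harmless. It sits inside the energy functional, is bounded below by $-K_f\|w\|^2$ (Remark~\ref{lowerbounded}) and above by $C_D\|\nabla w\|^2$. No splitting of $f'$ is needed.

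\textbf{The missing idea: Gronwall with a time-integrable weight.} The Kelvin--Voigt dissipation is integrable on the whole half-line: by \eqref{ei}, $\int_0^\infty\|\nabla u^i_t\|^{\alpha+2}dt\le\mathcal L(0)\le C_D$, uniformly on bounded sets. Hence
$J\le C_D(\|\nabla u^1_t\|+\|\nabla u^2_t\|)\|\nabla w\|^2\le[\delta+C_\delta(\|\nabla u^1_t\|^{\alpha+2}+\|\nabla u^2_t\|^{\alpha+2})]\|\nabla w\|^2$.
With the perturbed energy $E^\varepsilon_Z$ of \eqref{energy-Ew} this gives
$\frac{d}{dt}E^\varepsilon_Z\le\Theta_\varepsilon(t)E^\varepsilon_Z+C_D\|Z\|_{\mathcal H_{-1}}^{(\alpha+2)/(\alpha+1)}$, with $\int_s^t\Theta_\varepsilon\le-\varepsilon(t-s)+C_D$.
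This is the stabilizability estimate \eqref{entropy-alpha}. The lower-order remainder comes from $\|z_t\|^2\le\|\nabla z_t\|\,\|z_t\|_{H^{-1}}$ and Young's inequality with exponents $\alpha+2$ and $\frac{\alpha+2}{\alpha+1}$ against the strongly monotone part $Q_\alpha\|\nabla z_t\|^{\alpha+2}$. That remainder is compact by Arzel\`a--Ascoli in $C([0,T];\mathcal H_{-1})$.

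\textbf{A second omission: the damping difference tested against $w$.} Using $\frac{a^\alpha-b^\alpha}{a-b}\le\max\{1,\alpha\}\frac{a^\alpha+b^\alpha}{a+b}$, it is bounded by $C\varepsilon(\|\nabla u^1_t\|^\alpha+\|\nabla u^2_t\|^\alpha)\|\nabla w_t\|\,\|\nabla w\|$. Split it so that:
\begin{itemize}
\item the $\|\nabla w_t\|^2$ part is absorbed by $\frac12(\|\nabla u^1_t\|^\alpha+\|\nabla u^2_t\|^\alpha)\|\nabla w_t\|^2$;
\item the unbounded weight falls on $\|\nabla w\|^2$. For $\alpha>0$ it is Gronwall-integrable after $\|\nabla u^i_t\|^\alpha\le\delta+C_\delta\|\nabla u^i_t\|^{\alpha+2}$. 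For $\alpha=0$ it is constant but carries a factor $\varepsilon^2\ll\varepsilon$.
\end{itemize}

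Your remark about short-interval $L^1_tL^6_x$ smallness of $u_t$ points toward this mechanism. It has to be applied to $J$, not to $f'$, and turned into a time-integrable Gronwall weight.
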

	\begin{proof}
		The existence of a compact global attractor follows directly from
		Theorem~\ref{existence}. Indeed, Proposition~\ref{Prop-absorbing-set}
		shows that, for every $\alpha\in\mathbb{R}_+$, the dynamical system
		$(\mathcal{H},S(t))$ is dissipative, while Corollary~\ref{Corollary-stab-est}
		implies that it is asymptotically smooth. Hence, Theorem~\ref{existence}
		yields the existence of a compact global attractor $\mathcal{A}_\alpha$. Moreover, Proposition~\ref{gds} shows that $(\mathcal{H},S(t))$ is a
		gradient dynamical system. Therefore, by Theorem~\ref{theo-carct}, the
		global attractor admits the characterization
		$$
		\mathcal{A}_\alpha=\mathcal{M}^u_\alpha(\mathcal{N}),
		.$$
	\end{proof} 
	
	In the remainder of this section, we present the proofs of
	Proposition~\ref{Prop-absorbing-set}, Corollary~\ref{Corollary-stab-est},
	and Proposition~\ref{gds}.
	
	\subsection{Dissipativity}
	Before stating Proposition \ref{Prop-absorbing-set}, we introduce the modified energy functional $\mathcal{L}(t)$ defined by
	\begin{align}\label{4.1}
		\mathcal{L}(t)=E_U(t)+L,
		\qquad
		L:=C_{\nu}|\Omega|
		+\frac{1}{\omega\lambda_1}\|h\|^2 .
	\end{align}
	Proceeding as in the first energy estimate for the Galerkin approximations, it is readily seen that
	\begin{align}\label{4.3}
		\mathcal{L}(t)
		\geq
		\frac{\omega}{4}\|U(t)\|_{\mathcal H}^{2}.
	\end{align}
	\begin{proposition}\label{Prop-absorbing-set}{\bf[Dissipative property]}
		Let $B\subset\mathcal H$ be a bounded set. For every $\alpha\in \mathbb{R}_+$, let
		$U(t)=S^\alpha(t)U_0$ be the weak solution of problem \eqref{P} with
		$U_0\in B$. Under Assumption
		\ref{Assumption}, there exist
		positive constants $K_B$ and $R$ [independent of $\alpha$ and $U_0$], such that
		\begin{align}\label{Unifor-inequ-H}
			||S^\alpha(t)U_0||_{\mathcal H}^{2}
			\leq
			\begin{cases}
				\displaystyle
				\frac{4}{\omega}
				\left[
				\frac{\alpha}{2K_B}(t-1)^+
				+\mathcal{L}(0)^{-\frac{\alpha}{2}}
				\right]^{-\frac{2}{\alpha}}
				+R^2,
				& \text{if }\alpha>0,\\[3ex]
				\displaystyle
				\frac{4}{\omega}
				\sup_{0\le s\le1}\mathcal{L}(s)
				\exp\left(
				-[t]\ln\left(1+\frac{1}{K_B}\right)
				\right)
				+R^2,
				& \text{if }\alpha=0,
			\end{cases}
		\end{align}
		for all $t\ge0$, where $(t-1)^+=\max\{t-1,0\}$.
		This shows that the dynamical system $(\mathcal H,S^\alpha(t))$ possesses a bounded absorbing set. Consequently, $(\mathcal H,S^\alpha(t))$ is dissipative.
	\end{proposition}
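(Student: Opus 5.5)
The estimate has the shape of a Nakao-type difference inequality on unit time intervals. The plan is to derive, for the modified energy $\mathcal L(t)=E_U(t)+L$, an inequality of the form
\[
\sup_{t\le s\le t+1}\mathcal L(s)\le K_B\Big(\mathcal L(t)-\mathcal L(t+1)\Big)^{\frac{2}{\alpha+2}}+C_0 ,
\]
with $K_B$ and $C_0$ independent of $\alpha$ and of $U_0\in B$. Then I would apply Nakao's lemma: the algebraic decay for $\alpha>0$, the exponential decay for $\alpha=0$, and the radius $R^2$ coming from $C_0$ via \eqref{4.3}. By density and the continuous dependence (III), it suffices to work with strong solutions. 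The energy identity \eqref{ei} gives that $\mathcal L$ is nonincreasing and
\[
\int_t^{t+1}\|\nabla u_t\|^{\alpha+2}\,ds=\mathcal L(t)-\mathcal L(t+1)=:D(t)^{\alpha+2}.
\]

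\textbf{Step 1: bound the kinetic part.} By Hölder on $[t,t+1]$ and Poincaré,
\[
\int_t^{t+1}\|u_t\|^2\,ds\le \lambda_1^{-1}\Big(\int_t^{t+1}\|\nabla u_t\|^{\alpha+2}\Big)^{\frac{2}{\alpha+2}}=\lambda_1^{-1}D(t)^2 .
\]
By the mean value theorem there are $t_1\in[t,t+1/4]$ and $t_2\in[t+3/4,t+1]$ with $\|\nabla u_t(t_i)\|\le 4^{1/(\alpha+2)}D(t)\le 4D(t)$.

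\textbf{Step 2: recover the potential part.} Multiply the equation by $u$ and integrate over $[t_1,t_2]$. This gives
\[
\int_{t_1}^{t_2}\Big(\|\nabla u\|^2+(f(u),u)-(h,u)\Big)\,ds=\int_{t_1}^{t_2}\|u_t\|^2\,ds-\Big[(u_t,u)\Big]_{t_1}^{t_2}-\int_{t_1}^{t_2}\|\nabla u_t\|^\alpha(\nabla u_t,\nabla u)\,ds .
\]
Using \eqref{2.4} in the form $f(s)s\ge F(s)-\tfrac{\nu}{2}s^2$ and $\omega=1-\nu/\lambda_1$, the left-hand side controls $c\int_{t_1}^{t_2}\mathcal L\,ds-C$ after adding $\tfrac12\int\|u_t\|^2$.

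For the right-hand side, the boundary terms are bounded by $C D(t)\sup\|\nabla u\|\le C D(t)\sup_{[t,t+1]}\mathcal L^{1/2}$, using \eqref{4.3}. The damping term is handled by Hölder with exponents $\tfrac{\alpha+2}{\alpha+1}$ and $\alpha+2$:
\[
\int \|\nabla u_t\|^{\alpha+1}\|\nabla u\|\le D(t)^{\alpha+1}\sup\|\nabla u\|.
\]
Monotonicity gives $\sup_{[t,t+1]}\mathcal L=\mathcal L(t)\le \mathcal L(s)+D^{\alpha+2}$ for $s\in[t,t+1]$, so
\[
\mathcal L(t)\le C\Big(D^2+D^{\alpha+2}+(D+D^{\alpha+1})\,\mathcal L(t)^{1/2}\Big)+C .
\]
Young's inequality then yields $\mathcal L(t)\le C(D^2+D^{2\alpha+2}+D^{\alpha+2})+C$.

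\textbf{Main obstacle: uniformity.} The hardest part is making the constants uniform in $\alpha$ and matching the exponent $\tfrac{2}{\alpha+2}$ stated in Proposition~\ref{Prop-absorbing-set}. The higher powers $D^{2\alpha+2}$ and $D^{\alpha+2}$ must be absorbed. My first attempt is to split into cases. If $D\le1$, all powers are at most $D^2=(\mathcal L(t)-\mathcal L(t+1))^{2/(\alpha+2)}$. If $D>1$, I would use $D^{\alpha+2}\le \mathcal L(t)$ together with $\mathcal L(t)\le \mathcal L(0)\le C_B$, which makes $K_B$ depend on $B$; this is consistent with the notation $K_B$. A cleaner alternative is to bound $\|\nabla u_t\|^\alpha(\nabla u_t,\nabla u)$ by $\varepsilon$ times the dissipation when $\|\nabla u_t\|\ge1$.

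\textbf{Conclusion.} With the difference inequality in hand, I would follow the standard Nakao/Lasiecka–Tataru argument. For $\alpha>0$, whenever $\mathcal L(t)>2C_0$ one has $\mathcal L(t+1)\le \mathcal L(t)-(\mathcal L(t)/2K_B)^{(\alpha+2)/2}$. Comparing with the ODE $y'=-c\,y^{1+\alpha/2}$ gives the stated algebraic decay $\big[\tfrac{\alpha}{2K_B}(t-1)^++\mathcal L(0)^{-\alpha/2}\big]^{-2/\alpha}$. For $\alpha=0$, the inequality becomes $\mathcal L(t+1)\le \tfrac{K_B}{1+K_B}\mathcal L(t)+C$, and iterating over $[t]$ gives the exponential factor. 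Finally, \eqref{4.3} converts the bound on $\mathcal L$ into the bound \eqref{Unifor-inequ-H} on $\|U\|_{\mathcal H}^2$.
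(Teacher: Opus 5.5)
Your proposal is correct and follows the paper's proof essentially step by step: the energy identity defines the dissipation on $[t,t+1]$, you pick $t_1\in[t,t+\tfrac14]$ and $t_2\in[t+\tfrac34,t+1]$, multiply by $u$ on $[t_1,t_2]$, treat the nonlocal damping term by H\"older with exponents $\tfrac{\alpha+2}{\alpha+1}$ and $\alpha+2$, absorb the higher powers of the dissipation using $\mathcal L(t)-\mathcal L(t+1)\le\mathcal L(0)\le C_B$, and conclude with Nakao's lemma (your $D^{\alpha+2}$ is the paper's $D$, and choosing $t_i$ where $\|\nabla u_t(t_i)\|$ rather than $\|u_t(t_i)\|$ is small makes no difference). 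As in the paper, where $K_B=2^{\alpha/2}C_B^{(\alpha+2)/2}$, the constant that actually enters Nakao's formula is $(2K_B)^{(\alpha+2)/2}$ in your notation, so this argument gives it uniformly only for bounded $\alpha$, whereas the absorbing radius $R$ is genuinely independent of $\alpha$ and $B$.
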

	
	\begin{proof}
		Multiplying the Eq. $\eqref{P}$ by $u_t$, and integrating over $\Omega\times [t,t+1]$, we have
		\begin{eqnarray}\label{4.4}
			\int_{t}^{t+1}\|\nabla u_t(s)\|^{\alpha+2}ds= \mathcal{L}(t)-\mathcal{L}(t+1)=: D(t).
		\end{eqnarray}
		Using the Poincar\'e inequality,
		H\"older's inequality with
		$\frac{\alpha}{\alpha+2}+\frac{2}{\alpha+2}=1$,
		and \eqref{4.4}, we have
		\begin{eqnarray}
			\label{4.5}
			\begin{aligned}
				\int_t^{t+1}\|u_t(s)\|^2\,ds
				&\leq
				\frac{1}{\lambda_1}
				\int_t^{t+1}\|\nabla u_t(s)\|^2\,ds \\
				&\leq
				\frac{1}{\lambda_1}
				\left(\int_t^{t+1}\|\nabla u_t(s)\|^{\alpha+2}\,ds\right)^{\frac{2}{\alpha+2}}
				\leq
				\frac{1}{\lambda_1}D(t)^{\frac{2}{\alpha+2}}.
			\end{aligned}
		\end{eqnarray}
		which implies that there exist $t_1\in [t,t+1/4]$ and
		$t_2\in [t+3/4,t+1]$ such that
		\begin{align}\label{4.6}
			\|u_t(t_i)\|^2
			\leq
			4\int_t^{t+1}\|u_t(s)\|^2\,ds
			\leq
			\frac{4}{\lambda_1}D(t)^{\frac{2}{\alpha+2}},
			\qquad \text{for } i=1,2.
		\end{align}
		Next, multiplying Eq. \eqref{P} by $u$ and integrating over
		$\Omega\times[t_1,t_2]$, we obtain
		\begin{align}\label{4.7}
			\begin{aligned}
				\int_{t_1}^{t_2}E_u(s)\,ds
				+\frac12\int_{t_1}^{t_2}\|\nabla u(s)\|^2\,ds
				={}&
				-\int_{t_1}^{t_2}
				\left[
				\int_{\Omega}F(u)\,dx
				-\int_{\Omega}f(u)u\,dx
				\right]ds \\
				&+\frac32\int_{t_1}^{t_2}\|u_t(s)\|^2\,ds
				-\left[\int_{\Omega}u_tu\,dx\right]_{t_1}^{t_2} \\
				&+\int_{t_1}^{t_2}
				\|\nabla u_t(s)\|^\alpha(\Delta u_t,u)\,ds
				+\int_{t_1}^{t_2}\int_{\Omega}hu\,dx\,ds.
			\end{aligned}
		\end{align}
		By \eqref{2.4}, we have
		\begin{align*}
			\int_{t_1}^{t_2}\left[\int_{\Omega}F(u)dx-\int_{\Omega}f(u)udx\right]ds\le \frac{C_\nu}{2\lambda_1}\int_{t_1}^{t_2}\|\nabla u(s)\|^2ds.
		\end{align*}
		Then, combining \eqref{4.1} and \eqref{4.7}, we obtain
		\begin{align}\label{4.8}
			\begin{aligned}
				\int_{t_1}^{t_2}\mathcal{L}(s)\,ds
				+\frac{\omega}{2}\int_{t_1}^{t_2}\|\nabla u(s)\|^2\,ds
				\leq{}&
				\frac{3}{2}\int_{t_1}^{t_2}\|u_t(s)\|^2\,ds
				-\left[\int_{\Omega}u_tu\,dx\right]_{t_1}^{t_2} \\
				&+\int_{t_1}^{t_2}
				\|\nabla u_t(s)\|^\alpha(\Delta u_t,u)\,ds
				+L.
			\end{aligned}
		\end{align}
		It follows directly from \eqref{4.5} that
		\begin{align*}
			\frac{3}{2}\int_{t_1}^{t_2}\|u_t(s)\|^2\,ds
			\leq
			\frac{3}{2\lambda_1}D(t)^{\frac{2}{\alpha+2}}.
		\end{align*}
		Combining Poincar\'e's inequality with estimates \eqref{4.6} and \eqref{4.3}, we obtain
		\begin{eqnarray*}
			-\left[\int_{\Omega}u_tu\,dx\right]_{t_1}^{t_2}
			&\leq&
			\|u_t(t_1)\|\|u(t_1)\|
			+\|u_t(t_2)\|\|u(t_2)\|\\
			&\leq&
			\lambda_1^{-1/2}\|u_t(t_1)\|\|\nabla u(t_1)\|
			+\lambda_1^{-1/2}\|u_t(t_2)\|\|\nabla u(t_2)\|\\
			&\leq&
			4\lambda_1^{-3/2}D(t)^{\frac{1}{\alpha+2}}
			\sup_{t\leq s\leq t+1}\|\nabla u(s)\|\\
			&\leq&
			\frac{8\lambda_1^{-3/2}}{\omega^{1/2}}
			D(t)^{\frac{1}{\alpha+2}}
			\sup_{t\leq s\leq t+1}\mathcal{L}(s)^{1/2}\\
			&\leq&
			\frac{128\lambda_1^{-3}}{\omega}
			D(t)^{\frac{2}{\alpha+2}}
			+\frac{1}{8}
			\sup_{t\leq s\leq t+1}\mathcal{L}(s).
		\end{eqnarray*}
		From H\"older inequality with  $\frac{\alpha+1}{\alpha+2}+\frac{1}{\alpha+2}=1$, Young's inequality, estimates \eqref{4.3}-\eqref{4.4}, we have
		\begin{eqnarray*}
			\int_{t_1}^{t_2}\|\nabla u_t(s)\|^{\alpha}(\Delta u_t,u)ds&=& -\int_{t_1}^{t_2}\|\nabla u_t(s)\|^{\alpha}(\nabla u_t,\nabla u)ds\\
			&\le & \int_{t_1}^{t_2}\|\nabla u_t(s)\|^{\alpha+1}\|\nabla u(s)\|ds\\
			&\le&\left(\int_{t_1}^{t_2}\|\nabla u_t(s)\|^{\alpha+2}ds\right)^{\frac{\alpha+1}{\alpha+2}}\left(\int_{t_1}^{t_2}\|\nabla u(s)\|^{\alpha+2}ds\right)^{\frac{1}{\alpha+2}}\\
			&\le& D(t)^{\frac{\alpha+1}{\alpha+2}}\sup_{0\le s\le t+1}\|\nabla u(s)\|\\
			&\le&\frac{2 }{\omega^{1/2}}D(t)^{\frac{\alpha+1}{\alpha+2}}\sup_{0\le s\le t+1}\mathcal{L}(s)^{1/2}\\
			&\le&\frac{8 }{\omega}D(t)^{\frac{2(\alpha+1)}{\alpha+2}}+\frac{1}{8}\sup_{0\le s\le t+1}\mathcal{L}(s)
		\end{eqnarray*}
		Inserting  the last four estimates into \eqref{4.8} yields
		\begin{eqnarray}\label{4.9}
			\begin{aligned}
				\int_{t_1}^{t_2}\mathcal{L}(s)ds\le C_0\left[\,D(t)^{\frac{2}{\alpha+2}}+D(t)^{\frac{2(\alpha+1)}{\alpha+2}}\right]+\frac{1}{4}\sup_{0\le s\le t+1}\mathcal{L}(s)+L,
			\end{aligned}
		\end{eqnarray}
		where $C_0=\frac{3}{2\lambda_1}+\frac{128\lambda_1^{-3}}{\omega}+\frac{8}{\omega}$.
		
		Due to $\mathcal{L}(s)$  is decreasing, by the Mean Value Theorem, there exists $\tau\in [t_1,t_2]$ such that
		\begin{align*}
			\int_{t_1}^{t_2}\mathcal{L}(s)ds=\mathcal{L}(\tau)(t_2-t_1)\ge \frac{1}{2}\mathcal{L}(t+1).
		\end{align*}
		Thus, from \eqref{4.4}, we have
		\begin{eqnarray}\label{4.10}
			\mathcal{L}(t)=\mathcal{L}(t+1)+D(t)\le2\int_{t_1}^{t_2}\mathcal{L}(s)ds+D(t).
		\end{eqnarray}
		Substituting \eqref{4.10} in \eqref{4.9}, we obtain that
		\begin{align}\begin{split}
				\label{4.11}
				\mathcal{L}(t)\le \left[2C_0D(t)^{\frac{2}{\alpha+2}}+2C_0D(t)^{\frac{2(\alpha+1)}{\alpha+2}}+D(t)\right]+\frac{1}{2}\sup_{0\le s\le t+1}\mathcal{L}(s)+2L.
		\end{split}\end{align}
		Using that $\mathcal{L}(t)=\sup_{t\le s\le t+1}\mathcal{L}(s)$, it follows from \eqref{4.11} that
		\begin{eqnarray}
			\label{4.12}
			\mathcal{L}(t)\le D(t)^{\frac{2}{\alpha+2}}\left[\,4C_0+4C_0D(t)^{\frac{2\alpha}{\alpha+2}}+2D(t)^{\frac{\alpha}{\alpha+2}}\right]+4L.
		\end{eqnarray}
		We infer form  the definition of $D(t)$ that
		$$\sup_{U_0\in B}\left[4C_0+4C_0D(t)^{\frac{2\alpha}{\alpha+2}}+2D(t)^{\frac{\alpha}{\alpha+2}}\right]\le C_B.$$
		Hence, \eqref{4.12} can be rewritten as
		\begin{eqnarray*}
			\mathcal{L}(t)\le C_BD(t)^{\frac{2}{\alpha+2}}+4L,
		\end{eqnarray*}
		which implies that
		\begin{align}
			\label{4.13}
			\mathcal{L}(t)^{1+\frac{\alpha}{2}}
			\le
			K_B\bigl[\mathcal{L}(t)-\mathcal{L}(t+1)\bigr]
			+(8L)^{1+\frac{\alpha}{2}},
		\end{align}
		where
		$
		K_B:=2^{\frac{\alpha}{2}}(C_B)^{\frac{\alpha+2}{2}}.
		$
		
		Applying Nakao's lemma \cite[Lemma 2.1]{Nakao} to \eqref{4.13}, we obtain
		\begin{align}\label{4.14}
			\mathcal{L}(t)\le 8L+\Phi_\alpha(t),
			\qquad 0\le t<T,
		\end{align}
		where
		$$
		\Phi_\alpha(t)=
		\begin{cases}
			\displaystyle
			\left[
			\frac{\alpha}{2K_B}(t-1)^+
			+\mathcal{L}(0)^{-\frac{\alpha}{2}}
			\right]^{-\frac{2}{\alpha}},
			&\text{if } \alpha>0,\\[2ex]
			\displaystyle
			\sup_{0\le s\le1}\mathcal{L}(s)
			\exp\left(
			-[t]\ln\left(1+\frac{1}{K_B}\right)
			\right),
			& \text{if }\alpha=0,
		\end{cases}
		$$
		and $(t-1)^+=\max\{t-1,0\}$. This proves \eqref{Unifor-inequ-H}. Moreover, choosing
		$
		R^2:=\frac{32L}{\omega},
		$
		the closed ball
		\begin{align*}
			\mathcal{B}
			=\{U\in\mathcal H:\|U\|_{\mathcal H}\le R\}
		\end{align*}
		is a bounded absorbing set for the dynamical system
		$(\mathcal H,S^\alpha(t))$. This completes the proof of Proposition
		\ref{Prop-absorbing-set}.
	\end{proof}
	\begin{remark}
		Multiplying the Eq. $\eqref{P}$ by $u_t$, and integrating over $\Omega\times [0,t]$, we have
		\begin{eqnarray}\label{4.16}
			\underbrace{\mathcal{L}(t)}_{\ge 0}+\int_{0}^{t}\|\nabla u_t(s)\|^{\alpha+2}ds= \mathcal{L}(0).
		\end{eqnarray}
		Then, combining \eqref{4.16} and \eqref{4.14} there exists $t_B>0$ such that
		\begin{align*}
			\int_0^t\|\nabla u_t(s)\|^{\alpha+2}ds\le 8L, \quad \forall t\ge t_B.
		\end{align*}
	\end{remark}

	\subsection{Smoothness Property}
	To establish the asymptotic smoothness of the dynamical system
	$(\mathcal{H},S^{\alpha}(t))$ established in Corollary~\ref{Corollary-stab-est},
	we shall employ the following compactness criterion, which is a direct consequence of the stabilizability estimate obtained in Proposition~\ref{Prop-stab-est-0}. This criterion is due to
	\cite[Proposition 2.10]{Chueshov}.
	
	\begin{theorem}\cite[Proposition 2.10]{Chueshov}
		Let $(X,S(t))$ be a dynamical system on a complete metric space $X$
		endowed with a metric $d$. Assume that, for any bounded positively
		invariant set $B\subset X$ and any $\varepsilon>0$, there exists
		$T=T(\varepsilon,B)>0$ such that
		\begin{align*}
			d(S(T)y_1,S(T)y_2)
			\leq \varepsilon+\Psi_{\varepsilon,B,T}(y_1,y_2),
			\qquad y_i\in B,
		\end{align*}
		where $\Psi_{\varepsilon,B,T}$ is a function defined on $B\times B$
		such that
		\begin{align*}
			\lim_{n\to+\infty}\lim_{m\to+\infty}
			\Psi_{\varepsilon,B,T}(y_n,y_m)=0
		\end{align*}
		for every sequence $\{y_n\}$ in $B$. Then $(X,S(t))$ is asymptotically
		smooth.
	\end{theorem}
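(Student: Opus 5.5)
The statement is an abstract criterion, and I would prove it by showing that the Kuratowski measure of noncompactness of $S(t)B$ tends to zero. Then I would invoke the standard fact that this yields a compact attracting $\omega$-limit set.

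\emph{Step 1: reduction to a finite-net property at time $T$.} Let $B\subset X$ be bounded and positively invariant, and fix $\varepsilon>0$ with $T=T(\varepsilon,B)$. I claim that $S(T)B$ is covered by finitely many balls of radius $2\varepsilon$. Suppose not. Then inductively one can choose $z_n\in B$ such that $x_n:=S(T)z_n$ satisfies $d(x_n,x_m)\ge 2\varepsilon$ for all $n\neq m$. The hypothesis then gives
\begin{align*}
2\varepsilon\le d(S(T)z_n,S(T)z_m)\le \varepsilon+\Psi_{\varepsilon,B,T}(z_n,z_m),\qquad n\neq m,
\end{align*}
so $\lim_{n}\lim_{m}\Psi_{\varepsilon,B,T}(z_n,z_m)\ge\varepsilon>0$. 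This contradicts the assumption on $\Psi$, which is applied to the sequence $\{z_n\}\subset B$. Hence the Kuratowski measure satisfies $\kappa(S(T)B)\le 4\varepsilon$.

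\emph{Step 2: propagation in time.} By positive invariance, for $t\ge T$ we have $S(t)B=S(T)S(t-T)B\subset S(T)B$. Therefore $\kappa(S(t)B)\le 4\varepsilon$ for all $t\ge T(\varepsilon,B)$. Since $\varepsilon$ is arbitrary, $\kappa(S(t)B)\to0$ as $t\to\infty$.

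\emph{Step 3: construction of the compact set $K$.} Set $K=\omega(B)=\bigcap_{s\ge0}\overline{\bigcup_{t\ge s}S(t)B}$. The sets $\overline{\bigcup_{t\ge s}S(t)B}$ are nonempty, closed and decreasing in $s$. They are contained in $\overline{B}$ by invariance, and their measure of noncompactness tends to $0$, because $\bigcup_{t\ge s}S(t)B\subset S(s)B$ when $s\ge T$.

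By Kuratowski's theorem in the complete space $X$, the intersection $K$ is nonempty and compact, and $K\subset\overline B$. Attraction, i.e. $d_X\{S(t)B\mid K\}\to0$, then follows by the usual contradiction argument. If there were $t_n\to\infty$ and $y_n\in B$ with $\operatorname{dist}(S(t_n)y_n,K)\ge\delta$, then the sequence $S(t_n)y_n$ is relatively compact, since its tails lie in sets of arbitrarily small $\kappa$; a diagonal extraction via finite $2\varepsilon_k$-nets with $\varepsilon_k\to0$ together with completeness gives a Cauchy subsequence. Its limit belongs to $K$ by definition, which is a contradiction. This gives asymptotic smoothness.

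The only delicate point is Step 1. One must check that the iterated limit condition on $\Psi$, which the hypothesis assumes for \emph{every} sequence, is enough to exclude an infinite $2\varepsilon$-separated family. I expect this to work as above, because separation holds for every pair $n\neq m$, so the inner limit over $m$ is already bounded below by $\varepsilon$ for each fixed $n$.
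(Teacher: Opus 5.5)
Your proof is correct. The paper itself gives no proof of this statement; it is quoted from Chueshov's monograph. Your argument is essentially the standard one from that source:
\begin{itemize}
\item an infinite $2\varepsilon$-separated family $S(T)z_n$ would force $\Psi(z_n,z_m)\ge\varepsilon$ for all $n\neq m$, so $S(T)B$ has a finite $2\varepsilon$-net;
\item positive invariance then gives $\kappa(S(t)B)\le 4\varepsilon$ for all $t\ge T$;
\item Kuratowski's intersection theorem produces the compact attracting set $\omega(B)\subset\overline B$.
\end{itemize}

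Step 1 only uses the fact that no sequence in $B$ can have $\Psi$ bounded below by $\varepsilon$ off the diagonal. So it works unchanged under Chueshov's original hypothesis $\liminf_m\liminf_n\Psi(y_n,y_m)=0$. It works for either order of the iterated limits, and even if the condition is only required along some subsequence. This weaker condition is exactly what Corollary \ref{Corollary-stab-est} verifies. It is also the meaningful reading: if the displayed condition is imposed with genuine limits on every sequence, including $a,b,a,b,\dots$, it already forces $\Psi\equiv 0$ on $B\times B$.

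Two cosmetic points. First, the inclusion $\bigcup_{t\ge s}S(t)B\subset S(s)B$ holds for every $s\ge 0$, not only for $s\ge T$. Second, Kuratowski's theorem already gives $d_X\{F_s\mid K\}\to 0$ for $F_s=\overline{\bigcup_{t\ge s}S(t)B}$. Your closing contradiction argument is correct but can therefore be dropped.
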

	
	The following stabilizability estimate provides the key ingredient for
	applying this criterion to the dynamical system generated by
	\eqref{P}.
	\begin{proposition}{\bf[Stabilizability estimate]}
		\label{Prop-stab-est-0}
		Let $B\subset\mathcal{H}$ be a bounded set. Under Assumption
		\ref{Assumption}, for every $\alpha\in\mathbb{R}_+$, let
		$U^1(t)=S^\alpha(t)U^1(0)$ and $U^2(t)=S^\alpha(t)U^2(0)$ be weak solutions of
		problem \eqref{P} corresponding to initial data $U^1(0)=U^1_0,U^2(0)=U^2_0\in B$.
		Then there exist positive constants $\varepsilon$ and $C_B$ such that
		\begin{equation}\label{entropy-alpha}
			\begin{aligned}
				\|S^{\alpha}(t)U^1_0-S^{\alpha}(t)U^2_0\|_{\mathcal H}^{2}
				\leq{}&
				C_B e^{-\varepsilon t}
				\|U^1(0)-U^2(0)\|_{\mathcal H}^{2}
				\\
				&+
				C_B\int_0^t
				e^{-\varepsilon(t-s)}
				\|S^{\alpha}(s)U^1_0-S^{\alpha}(s)U^2_0\|_{\mathcal H_{-1}}^{\frac{\alpha+2}{\alpha+1}}
				\,ds,
			\end{aligned}
		\end{equation}
		where $\mathcal{H}_{-1}=L^2(\Omega)\times H^{-1}(\Omega)$.
	\end{proposition}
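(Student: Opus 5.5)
We will prove \eqref{entropy-alpha} first for strong solutions with data in $\mathcal H_1$ and then pass to weak solutions by density, exactly as in the proof of Theorem~\ref{theo-global-solution}-\textbf{(II)}. Fix $B$. By Proposition~\ref{Prop-absorbing-set} and \eqref{4.16}, both trajectories stay in a ball of radius $C_B$ in $\mathcal H$ for all $t\ge0$. Let $w=u^1-u^2$ and $W=(w,w_t)$; then $w$ solves \eqref{3.14}. Write the damping difference as
$$
\|\nabla u_t^1\|^\alpha\Delta u_t^1-\|\nabla u_t^2\|^\alpha\Delta u_t^2=\Delta\bigl(B(u_t^1)-B(u_t^2)\bigr)
$$
in the notation $B(\cdot)$ of Section~3. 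Its pairing with $w_t$ gives at least $C_\alpha\|\nabla w_t\|^{\alpha+2}$, and for $\alpha=0$ exactly $\|\nabla w_t\|^2$. We will work with the perturbed energy functional
$$
\mathcal E_\delta(t)=\mathcal E_W(t)+\delta\,(w_t,w),
$$
with $\mathcal E_W$ as in the uniqueness proof, where \eqref{3.21} holds. For $\delta$ small, $\mathcal E_\delta\simeq\|W\|_{\mathcal H}^2$ with constants depending only on $B$.

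The computation proceeds in three steps.
\begin{enumerate}[(i)]
\item Multiplier $w_t$: this gives \eqref{3.15}, the monotone dissipation $\langle B(u_t^1)-B(u_t^2),w_t\rangle\ge0$, and the cubic error $\frac12\int f''(\cdot)(\theta u_t^1+(1-\theta)u_t^2)|w|^2$.
\item Multiplier $w$: this produces $-\|\nabla w\|^2+\|w_t\|^2-(f(u^1)-f(u^2),w)$ and the cross term $-\langle B(u_t^1)-B(u_t^2),w\rangle$.
\item Compactness terms: we split $f(u^1)-f(u^2)$ and estimate $(f(u^1)-f(u^2),w)$ by the lower-order quantity $\|w\|\le\|W\|_{\mathcal H_{-1}}$. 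Using the $L^6$ bounds, this gives $\le C_B\|\nabla w\|^{2-\theta}\|w\|^{\theta}$, hence $\epsilon\|\nabla w\|^2+C\|w\|^2$.
\end{enumerate}
The cross term is controlled using \eqref{3.12}:
$$
|\langle B(u_t^1)-B(u_t^2),w\rangle|\le C(\|\nabla u_t^1\|^\alpha+\|\nabla u_t^2\|^\alpha)\|\nabla w_t\|\,\|\nabla w\|.
$$
Alternatively, we bound it by the dissipation via the $H^{-1}$-norm of $w_t$. This is where the exponent $\frac{\alpha+2}{\alpha+1}$ in \eqref{entropy-alpha} should come from. By duality,
$$
\|w_t\|^2\le\|w_t\|_{H^{-1}}\|\nabla w_t\|,
$$
and Young's inequality with exponents $\alpha+2$ and $\frac{\alpha+2}{\alpha+1}$ absorbs $\|\nabla w_t\|$ into $C_\alpha\|\nabla w_t\|^{\alpha+2}$. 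This leaves $C\|w_t\|_{H^{-1}}^{\frac{\alpha+2}{\alpha+1}}$, which is bounded by $C\|W\|_{\mathcal H_{-1}}^{\frac{\alpha+2}{\alpha+1}}$. The $\|w\|^2$ terms are converted the same way, using $\|w\|\le C_B$.

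The main obstacle is twofold: the critical cubic term $\int f''(\cdot)u_t^i|w|^2$, and producing a genuinely exponential rate when the dissipation $\|\nabla w_t\|^{\alpha+2}$ is degenerate for $\alpha>0$. For the first, we would bound it by $C_B\|\nabla u_t^i\|\,\|\nabla w\|^2$, as in the uniqueness proof. The weight $\|\nabla u_t^i\|$ is integrable in time with uniformly bounded total $\int_0^\infty\|\nabla u_t^i\|^{\alpha+2}\le C_B$ by \eqref{4.16}. It is then absorbed by a Gronwall argument with the multiplier $\exp\bigl(-C\int_s^t\|\nabla u_t^i\|\,d\tau\bigr)$. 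Over long intervals this integral can grow like $t^{(\alpha+1)/(\alpha+2)}$, which is sublinear, so the factor is dominated by $C_B e^{\varepsilon t/2}$ for any fixed rate. Thus we first aim for a differential inequality
$$
\frac{d}{dt}\mathcal E_\delta+2\varepsilon\mathcal E_\delta\le C_B\psi(t)\mathcal E_\delta+C_B\|W\|_{\mathcal H_{-1}}^{\frac{\alpha+2}{\alpha+1}},
$$
and then integrate it.

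For the degeneracy, the $\delta$-multiplier term $-\delta\|\nabla w\|^2$ gives decay of the potential part, but $\delta\|w_t\|^2$ must be controlled by the dissipation. We plan to use the Young/interpolation step above, writing
$$
\|w_t\|^2\le\eta\|\nabla w_t\|^{\alpha+2}+C_\eta\|w_t\|_{H^{-1}}^{\frac{\alpha+2}{\alpha+1}}.
$$
The cross term $\delta\|\nabla u_t^i\|^{\alpha}\|\nabla w_t\|\|\nabla w\|$ will be absorbed as $\delta\epsilon\|\nabla w\|^2$ plus $\delta C\|\nabla u_t^i\|^{2\alpha}\|\nabla w_t\|^2$. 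The latter is handled similarly, and its residual is placed into $\psi(t)$. If this absorption fails for large $\alpha$, we would fall back on splitting into the cases $\|\nabla w_t\|\le1$ and $\|\nabla w_t\|>1$, again leaving only $\mathcal H_{-1}$ remainders. Gronwall's inequality and the norm equivalence then give \eqref{entropy-alpha}, and density extends it to weak solutions.
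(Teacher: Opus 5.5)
\paragraph{The gap: the damping cross term for $\alpha>0$.}
Most of your architecture is sound: the multipliers $w_t$ and $\delta w$, the functional with $K_f\|w\|^2$, the $H^{-1}$--$H_0^1$ interpolation giving the exponent $\frac{\alpha+2}{\alpha+1}$, the sublinear growth of $\int_s^t\|\nabla u_t^i\|$, and density from strong solutions. The argument breaks at the cross term $\delta\langle B(u_t^1)-B(u_t^2),w\rangle$ when $\alpha>0$.

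You keep only $C_\alpha\|\nabla w_t\|^{\alpha+2}$ as dissipation and split
\[
\delta\Lambda(t)\|\nabla w_t\|\,\|\nabla w\|\le\delta\epsilon\|\nabla w\|^2+\delta C\Lambda(t)^2\|\nabla w_t\|^2,\qquad \Lambda(t):=\|\nabla u_t^1\|^\alpha+\|\nabla u_t^2\|^\alpha .
\]
The last term cannot be closed, for three reasons:
\begin{itemize}
\item It is not dominated by $\|\nabla w_t\|^{\alpha+2}$ whenever $\|\nabla w_t\|\ll\|\nabla u_t^i\|^2$, e.g.\ for two nearby trajectories that are both moving.
\item It cannot be ``placed into $\psi(t)$'', because $\mathcal E_\delta\simeq\|W\|_{\mathcal H}^2$ does not control $\|\nabla w_t\|^2$.
\item Any Young split against $\|\nabla w_t\|^{\alpha+2}$ leaves either a forcing term independent of $W$ or a sub-quadratic power such as $\|\nabla w\|^{\frac{\alpha+2}{\alpha+1}}$. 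Neither fits a linear Gronwall inequality.
\end{itemize}
Even absorbing it into the weighted dissipation $\frac12\Lambda\|\nabla w_t\|^2$ would require $\delta\Lambda(t)$ to be small pointwise in time. That fails, since you only know $\int_0^\infty\|\nabla u_t^i\|^{\alpha+2}\,dt\le C_B$. The fallback split $\|\nabla w_t\|\le1$ versus $\|\nabla w_t\|>1$ does not touch the actual obstruction, which is the weight $\Lambda(t)$.

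\paragraph{The missing idea: keep the weighted dissipation.}
Do not reduce the monotone pairing to $\|\nabla w_t\|^{\alpha+2}$. Exactly,
\[
\langle B(u_t^1)-B(u_t^2),w_t\rangle=\tfrac12\Lambda(t)\|\nabla w_t\|^2+\tfrac12\bigl(\|\nabla u_t^1\|^\alpha-\|\nabla u_t^2\|^\alpha\bigr)\bigl(\|\nabla u_t^1\|^2-\|\nabla u_t^2\|^2\bigr),
\]
and the last product is nonnegative. Split the first term in half.
\begin{itemize}
\item One half is bounded below by $c_\alpha\|\nabla w_t\|^{\alpha+2}$, because $\|\nabla w_t\|\le\|\nabla u_t^1\|+\|\nabla u_t^2\|$. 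This handles $\delta\|w_t\|^2$ and the $(w_t,w)$-type terms exactly as you planned.
\item The other half absorbs the cross term through a weight-matched Young inequality:
\[
C\delta\Lambda(t)\|\nabla w_t\|\,\|\nabla w\|\le\tfrac18\Lambda(t)\|\nabla w_t\|^2+2C^2\delta^2\Lambda(t)\|\nabla w\|^2 .
\]
\end{itemize}
Since $\Lambda(t)\le 2+\|\nabla u_t^1\|^{\alpha+2}+\|\nabla u_t^2\|^{\alpha+2}$, the coefficient $2C^2\delta^2\Lambda(t)$ is harmless in the Gronwall exponent. Its constant part is $O(\delta^2)$ against a decay rate of order $\delta$, and the remainder has time integral at most $C_B$. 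This is how the paper closes the estimate.

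Note also that the difference bound $|\langle B(u_t^1)-B(u_t^2),w\rangle|\le C\Lambda(t)\|\nabla w_t\|\|\nabla w\|$ does not follow from \eqref{3.12}, which bounds each $B(u_t^i)$ separately. It needs a Lipschitz-type estimate for $x\mapsto\|x\|^\alpha x$, for instance $\frac{a^\alpha-b^\alpha}{a-b}\le\max\{1,\alpha\}\frac{a^\alpha+b^\alpha}{a+b}$.

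\paragraph{A smaller error in step (iii).}
The bound $|(f(u^1)-f(u^2),w)|\le C_B\|\nabla w\|^{2-\theta}\|w\|^{\theta}$ is not available at quintic growth. The integral $\int_\Omega(1+|u^1|^4+|u^2|^4)|w|^2\,dx$, with $u^i$ bounded only in $L^6$, uses up the full $\|w\|_6^2$, leaving no room to interpolate. The bound is also unnecessary: the $\delta w$-multiplier needs only the one-sided estimate $-(f(u^1)-f(u^2),w)\le K_f\|w\|^2$ from $f'\ge-K_f$. Alternatively, keep $\int_\Omega\int_0^1 f'(\cdot)\,d\theta\,|w|^2\,dx$ inside the functional, as $\mathcal E_W$ already does.
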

	
	\begin{proof}
		Let us denote $z:=u^1-u^2$. Then $z(t)=(z(t),z_t(t))=S^{\alpha}(t)U^1_0-S^{\alpha}(t)U^2_0$ satisfies the following equation
		
		\begin{align}\label{5.5}
			\begin{cases}
				z_{tt}-\Delta z
				+\left[
				\|\nabla u^1_t\|^\alpha\Delta u^1_t
				-\|\nabla u^2_t\|^\alpha\Delta u^2_t
				\right]
				+f(u^1)-f(u^2)=0,
				& \text{in }\Omega\times\mathbb{R}_+,\\
				z=0,
				& \text{on }\Gamma\times\mathbb{R}_+,\\
				z(x,0)=u^1_0-u^2_0,\qquad
				z_t(x,0)=u^1_1-u^2_1,
				& x\in\Omega.
			\end{cases}
		\end{align}
		First, observe that we can rewrite
		\begin{eqnarray*}
			\|\nabla u^1_t(t)\|^\alpha\Delta u^1_t
			-\|\nabla u^2_t(t)\|^\alpha\Delta u^2_t
			&=&
			\frac12
			\left[\,
			\|\nabla u^1_t(t)\|^\alpha
			+
			\|\nabla u^2_t(t)\|^\alpha
			\,\right]\Delta u^1_t\\
			&&+
			\frac12
			\left[\,
			\|\nabla u^1_t(t)\|^\alpha
			-
			\|\nabla u^2_t(t)\|^\alpha
			\,\right]
			\left[
			\Delta u^1_t+\Delta u^2_t
			\right].
		\end{eqnarray*}
		Then, multiplying \eqref{5.5} by $z_t$ and integrating over
		$\Omega$, we obtain
		\begin{equation}
			\label{asymptotic00-alpha}
			\begin{aligned}
				&
				\frac12\frac{d}{dt}
				\left[
				\|Z(t)\|_{\mathcal H}^2
				+
				\int_\Omega\int_0^1
				f'(\theta u^1+(1-\theta)u^2)
				\,d\theta\,|z|^2dx
				\right]
				\\
				&\quad+
				\underbrace{\frac12
					\left[\,
					\|\nabla u^1_t(t)\|^\alpha
					+
					\|\nabla u^2_t(t)\|^\alpha
					\,\right]
					\|\nabla z_t(t)\|^2}_{I}
				\\
				&\quad
				+
				\underbrace{\frac12
					\left[\,
					\|\nabla u^1_t(t)\|^\alpha
					-
					\|\nabla u^2_t(t)\|^\alpha
					\,\right]
					\left[\,
					\|\nabla u^1_t(t)\|^2-
					\|\nabla u^2_t(t)\|^2\,\right]}_{\ge 0}
				\\
				&=
				\underbrace{\frac12
					\int_\Omega
					\int_0^1
					f''(\theta u^1+(1-\theta)u^2)
					(\theta u^1_t+(1-\theta)u^2_t)
					|z|^2
					\,d\theta\,dx}_{J}.
			\end{aligned}
		\end{equation}
		Since the function $s\mapsto s^\alpha$ is increasing, the third term on the left-hand side is nonnegative. Moreover, using
		\begin{align*}
			\begin{cases}
				(a+b)^\alpha\le a^\alpha+b^\alpha, & 0<\alpha<1,\\
				(a+b)^\alpha\le 2^{\alpha-1}(a^\alpha+b^\alpha), & \alpha\ge1,
			\end{cases}
		\end{align*}
		we obtain
		\begin{eqnarray*}
			I
			&=&
			\frac12
			\left[\,
			\|\nabla u^1_t(t)\|^\alpha
			+
			\|\nabla u^2_t(t)\|^\alpha
			\,\right]
			\|\nabla z_t(t)\|^2
			\\
			&\ge&
			Q_\alpha\|\nabla z_t(t)\|^{\alpha+2}
			+
			\frac14
			\left[\,
			\|\nabla u^1_t(t)\|^\alpha
			+
			\|\nabla u^2_t(t)\|^\alpha
			\,\right]
			\|\nabla z_t(t)\|^2,
		\end{eqnarray*}
		where
		\begin{align*}
			Q_\alpha=
			\begin{cases}
				\dfrac14, & 0<\alpha<1,\\[2mm]
				\dfrac{1}{2^{\alpha+1}}, & \alpha\ge1.
			\end{cases}
		\end{align*}
		On the other hand, by the assumption \eqref{hyp_f'}, the inequality H\"{o}lder 
		(with $\frac12+\frac16+\frac13=1$),
		the Sobolev embedding
		$H_0^1(\Omega)\hookrightarrow L^6(\Omega)$,
		and Young's inequality, we obtain
		\begin{eqnarray*}
			J
			&\le&
			C
			\int_\Omega
			\left(1+|u^1|^3+|u^2|^3\right)
			\left(|u^1_t|+|u^2_t|\right)
			|z|^2dx
			\\
			&\le&
			C
			\left[
			1+\|u^1(t)\|_6^3+\|u^2(t)\|_6^3
			\right]
			\left[
			\|u^1_t(t)\|_6+\|u^2_t(t)\|_6
			\right]
			\|z(t)\|_6^2
			\\
			&\le&
			C
			\left[
			1+\|\nabla u^1(t)\|^3+\|\nabla u^2(t)\|^3
			\right]
			\left[
			\|\nabla u^1_t(t)\|+\|\nabla u^2_t(t)\|
			\right]
			\|\nabla z(t)\|^2
			\\
			&\le&
			C_B
			\left[
			\|\nabla u^1_t(t)\|
			+\|\nabla u^2_t(t)\|
			\right]
			\|\nabla z(t)\|^2.
		\end{eqnarray*}
		Consequently, using the previous estimates for $I$ and $J$, it follows from \eqref{asymptotic00-alpha} that
		\begin{equation}
			\label{asymptotic01-alpha}
			\begin{aligned}
				&
				\frac12
				\frac{d}{dt}
				\left[
				\|Z(t)\|_{\mathcal H}^2
				+
				\int_\Omega
				\int_0^1
				f'(\theta u^1+(1-\theta)u^2)
				\,d\theta\,|z|^2dx
				\right]
				\\
				&\quad
				+Q_\alpha\|\nabla z_t(t)\|^{\alpha+2}
				+\frac14
				\left[
				\|\nabla u^1_t(t)\|^\alpha
				+
				\|\nabla u^2_t(t)\|^\alpha
				\right]
				\|\nabla z_t(t)\|^2
				\\
				&\le
				C_B
				\left[
				\|\nabla u^1_t(t)\|
				+
				\|\nabla u^2_t(t)\|
				\right]
				\|\nabla z(t)\|^2.
			\end{aligned}
		\end{equation}
		Next, multiplying \eqref{5.5} by $\varepsilon z$, integrating over
		$\Omega$, and integrating by parts, we obtain
		
		\begin{align}
			\label{asymptotic1}
			\varepsilon\|Z(t)\|_{\mathcal H}^2
			+
			\varepsilon
			\int_\Omega
			\int_0^1
			f'(\theta u^1+(1-\theta)u^2)
			\,d\theta\,|z|^2dx
			+
			\varepsilon
			\frac{d}{dt}
			\int_\Omega
			z_tz\,dx
			=
			\sum_{j=1}^3L_j,
		\end{align}
		
		where
		
		\begin{align*}
			L_1&=
			2\varepsilon\|z_t(t)\|^2,
			\\
			L_2&=
			-\frac{\varepsilon}{2}
			\left[\,
			\|\nabla u^1_t(t)\|^{\alpha}
			+
			\|\nabla u^2_t(t)\|^{\alpha}
			\,\right]
			\int_\Omega
			\nabla z_t\cdot\nabla z\,dx,
			\\
			L_3&=
			-\frac{\varepsilon}{2}
			\left[\,
			\|\nabla u^1_t(t)\|^{\alpha}
			-
			\|\nabla u^2_t(t)\|^{\alpha}
			\,\right]
			\int_\Omega
			(\nabla u^1_t+\nabla u^2_t)\cdot\nabla z\,dx.
		\end{align*}
		Adding \eqref{asymptotic01-alpha} and \eqref{asymptotic1}, we obtain
		
		\begin{equation}
			\label{asymptotic02-alpha}
			\begin{aligned}
				&
				\frac{d}{dt}
				\left[
				\frac12
				\|Z(t)\|_{\mathcal H}^2
				+
				\frac12
				\int_\Omega
				\int_0^1
				f'(\theta u^1+(1-\theta)u^2)
				\,d\theta\,|z|^2dx
				+
				\varepsilon
				\int_\Omega
				z_tz\,dx
				\right]
				\\
				&\quad
				+
				\varepsilon
				\|Z(t)\|_{\mathcal H}^2
				+
				\varepsilon
				\int_\Omega
				\int_0^1
				f'(\theta u^1+(1-\theta)u^2)
				\,d\theta\,|z|^2dx
				\\
				&\quad
				+Q_\alpha
				\|\nabla z_t(t)\|^{\alpha+2}
				+
				\frac14
				\left[
				\|\nabla u^1_t(t)\|^\alpha
				+
				\|\nabla u^2_t(t)\|^\alpha
				\right]
				\|\nabla z_t(t)\|^2
				\\
				&\le
				C_{\mathcal B}
				\left[
				\|\nabla u^1_t(t)\|
				+
				\|\nabla u^2_t(t)\|
				\right]
				\|\nabla z(t)\|^2
				+
				\sum_{j=1}^3L_j.
			\end{aligned}
		\end{equation}
		
		Now, we define the perturbed energy functional
		
		\begin{align}\label{energy-Ew}
			E_Z^\varepsilon(t)
			=
			\frac12
			\|Z(t)\|_{\mathcal H}^2
			+
			\frac12
			\int_\Omega
			\int_0^1
			f'(\theta u^1+(1-\theta)u^2)
			\,d\theta\,|z|^2dx
			+
			\varepsilon
			\int_\Omega
			z_tz\,dx
			+
			K_f\|z(t)\|^2,
		\end{align}
		where $K_f>0$ is defined as in \eqref{lower1} of Remark \ref{lowerbounded}. Choosing $\varepsilon>0$ sufficiently small and using Young's inequality together with the dissipativity assumption \eqref{hyp-inf-f}, there exists a constant $\varrho>0$ such that
		
		\begin{align}\label{eq-energy_Ew}
			\varrho
			\|Z(t)\|_{\mathcal H}^2
			\leq
			E_Z^\varepsilon(t)
			\leq
			C_{\mathcal B}
			\|Z(t)\|_{\mathcal H}^2.
		\end{align}
		Combining the definition of $E_Z^\varepsilon$ in \eqref{energy-Ew} with estimate \eqref{asymptotic02-alpha}, we obtain
		\begin{equation}
			\label{asymptotic03}
			\begin{aligned}
				&
				\frac{d}{dt}E_Z^\varepsilon(t)
				+
				2\varepsilon E_Z^\varepsilon(t)
				+
				Q_{\alpha}
				\|\nabla z_t(t)\|^{\alpha+2}
				+
				\frac14
				\left[\,
				\|\nabla u^1_t(t)\|^\alpha
				+
				\|\nabla u^2_t(t)\|^\alpha
				\,\right]
				\|\nabla z_t(t)\|^2
				\\
				&\le
				\underbrace{2\varepsilon^2
					\left|
					\int_\Omega z_tz\,dx
					\right|}_{L}
				+
				2\varepsilon K_f\|z\|^2
				+
				C_{\mathcal B}
				\left[\,
				\|\nabla u^1_t(t)\|
				+
				\|\nabla u^2_t(t)\|
				\,\right]
				\|\nabla z(t)\|^2
				+
				\sum_{j=1}^3L_j.
			\end{aligned}
		\end{equation}
		
		First, by Poincar\'{e}'s inequality and Young's inequality with conjugate exponents
		$p=\alpha+2$ and $q=\frac{\alpha+2}{\alpha+1}$, we obtain
		\begin{align*}
			L
			\le
			\frac{Q_\alpha}{2}
			\|\nabla z_t(t)\|^{\alpha+2}
			+
			C_\alpha
			\varepsilon^{\frac{2(\alpha+2)}{\alpha+1}}
			\|z(t)\|^{\frac{\alpha+2}{\alpha+1}},
		\end{align*}
		Using the interpolation inequality
		$
		H_0^1(\Omega)\hookrightarrow L^2(\Omega)\hookrightarrow H^{-1}(\Omega),
		$
		together with Young's inequality, we obtain
		\begin{align*}
			L_1
			\le
			C\varepsilon
			\|\nabla z_t(t)\|
			\|z_t(t)\|_{H^{-1}}
			\le
			\frac{Q_\alpha}{2}
			\|\nabla z_t(t)\|^{\alpha+2}
			+
			C'_{\alpha}\varepsilon^{\frac{\alpha+2}{\alpha+1}}
			\|z_t(t)\|_{H^{-1}}^{\frac{\alpha+2}{\alpha+1}}.
		\end{align*}
		Next, by Young's inequality,
		\begin{eqnarray*}
			L_2
			&\le&
			\frac{\varepsilon}{2}
			\left[\,
			\|\nabla u^1_t(t)\|^\alpha
			+
			\|\nabla u^2_t(t)\|^\alpha
			\,\right]
			\|\nabla z_t(t)\|
			\|\nabla z(t)\|
			\\
			&\le&
			\frac18
			\left[\,
			\|\nabla u^1_t(t)\|^\alpha
			+
			\|\nabla u^2_t(t)\|^\alpha
			\,\right]
			\|\nabla z_t(t)\|^2
			+
			\frac{\varepsilon^2}{2}
			\left[\,
			\|\nabla u^1_t(t)\|^\alpha
			+
			\|\nabla u^2_t(t)\|^\alpha
			\,\right]
			\|\nabla z(t)\|^2.
		\end{eqnarray*}
		Finally, for each fixed $t$, if
		$\|\nabla u^1_t(t)\|=\|\nabla u^2_t(t)\|$, then $L_3=0$. Otherwise,
		\begin{align*}
			L_3&=-
			\frac{\varepsilon}{2}
			\left[\,
			\|\nabla u^1_t(t)\|^{\alpha}
			-
			\|\nabla u^2_t(t)\|^{\alpha}
			\,\right]
			\int_\Omega
			(\nabla u^1_t+\nabla u^2_t)\cdot\nabla z\,dx\\
			&\leq
			\frac{\varepsilon}{2}\left|
			\frac{\|\nabla u^1_t(t)\|^{\alpha}
				-
				\|\nabla u^2_t(t)\|^{\alpha}}
			{\|\nabla u^1_t(t)\|
				-
				\|\nabla u^2_t(t)\|}
			\left[
			\|\nabla u^1_t(t)\|
			-
			\|\nabla u^2_t(t)\|
			\right]\right|
			\|\nabla u^1_t+\nabla u^2_t\|
			\|\nabla z\|\\
			&\leq
			\frac{\varepsilon\max\{1,\alpha\}}{2}
			\frac{\|\nabla u^1_t(t)\|^{\alpha}
				+
				\|\nabla u^2_t(t)\|^{\alpha}}
			{\|\nabla u^1_t(t)\|
				+
				\|\nabla u^2_t(t)\|}
			\|\nabla z_t(t)\|
			\left[
			\|\nabla u^1_t\|+\|\nabla u^2_t\|
			\right]\|\nabla z\|\\
			&=
			\frac{\varepsilon\max\{1,\alpha\}}{2}
			\left[
			\|\nabla u^1_t(t)\|^{\alpha}
			+
			\|\nabla u^2_t(t)\|^{\alpha}
			\right]
			\|\nabla z_t(t)\|
			\|\nabla z(t)\|\\
			&\leq
			\frac18
			\left[\,
			\|\nabla u^1_t(t)\|^\alpha
			+
			\|\nabla u^2_t(t)\|^\alpha
			\,\right]
			\|\nabla z_t(t)\|^2
			+
			\frac{(\varepsilon\max\{1,\alpha\})^2}{2}
			\left[\,
			\|\nabla u^1_t(t)\|^\alpha
			+
			\|\nabla u^2_t(t)\|^\alpha
			\,\right]
			\|\nabla z(t)\|^2.
		\end{align*}
		where we have used
		$\left|\|\nabla u^1_t(t)\|-\|\nabla u^2_t(t)\|\right|
		\leq\|\nabla u^1_t(t)-\nabla u^2_t(t)\|
		\leq\|\nabla z_t(t)\|$
		and the inequality (see Lemma 5.5 in \cite{Z-Y})
		\begin{align*}
			0<
			\frac{a^{\alpha}-b^{\alpha}}{a-b}
			\leq
			\max\{1,\alpha\}
			\left(\frac{a^{\alpha}+b^{\alpha}}{a+b}\right),
			\quad
			\forall\alpha>0,\quad a,b\geq0,\quad a+b>0,\quad a\neq b.
		\end{align*}
		
		Collecting the estimates for the terms $L$, $L_1$, $L_2$, and $L_3$ and
		substituting them into \eqref{asymptotic03}, we obtain
		
		\begin{equation}
			\label{asymptotic04}
			\begin{aligned}
				&
				\frac{d}{dt}E_Z^\varepsilon(t)
				+
				2\varepsilon E_Z^\varepsilon(t)
				\leq
				C_{\mathcal B}
				\left[\,
				\|\nabla u^1_t(t)\|
				+
				\|\nabla u^2_t(t)\|
				\,\right]
				\|\nabla z(t)\|^2
				\\
				&
				\qquad
				+
				C
				\left[\,
				\|\nabla u^1_t(t)\|^\alpha
				+
				\|\nabla u^2_t(t)\|^\alpha
				\,\right]
				\|\nabla z(t)\|^2
				+
				C_{\mathcal B}
				\left[\,
				\|z(t)\|^{\frac{\alpha+2}{\alpha+1}}
				+
				\|z_t(t)\|_{H^{-1}}^{\frac{\alpha+2}{\alpha+1}}
				\,\right].
			\end{aligned}
		\end{equation}
		
		Using the equivalence of the perturbed energy \eqref{eq-energy_Ew}, we have
		\begin{align*}
			\|\nabla z(t)\|^2
			\leq
			\frac{1}{\varrho}E_Z^\varepsilon(t).
		\end{align*}
		Consequently, by Young's inequality,
		\begin{eqnarray*}
			&&
			C_{\mathcal B}
			\left[\,
			\|\nabla u^1_t(t)\|
			+
			\|\nabla u^2_t(t)\|
			\,\right]
			\|\nabla z(t)\|^2
			+
			C
			\left[\,
			\|\nabla u^1_t(t)\|^\alpha
			+
			\|\nabla u^2_t(t)\|^\alpha
			\,\right]
			\|\nabla z(t)\|^2
			\\
			&&\leq
			\left[
			\varepsilon
			+
			C_{\mathcal B,\varepsilon}
			\left(
			\|\nabla u^1_t(t)\|^{\alpha+2}
			+
			\|\nabla u^2_t(t)\|^{\alpha+2}
			\right)
			\right]
			E_Z^\varepsilon(t).
		\end{eqnarray*}
		Returning to \eqref{asymptotic04}, we obtain
		
		\begin{equation}
			\label{asymptotic05}
			\begin{aligned}
				\frac{d}{dt}E_Z^\varepsilon(t)
				\leq
				\Theta_{\varepsilon}(t)E_Z^\varepsilon(t)
				+
				C_{\mathcal B}
				\|Z(t)\|_{\mathcal{H}_{-1}}^{\frac{\alpha+2}{\alpha+1}},
			\end{aligned}
		\end{equation}
		where
		\begin{align*}
			\Theta_{\varepsilon}(t)
			=
			-\varepsilon
			+
			C_{\mathcal B,\varepsilon}
			\left[\,
			\|\nabla u^1_t(t)\|^{\alpha+2}
			+
			\|\nabla u^2_t(t)\|^{\alpha+2}
			\,\right],
		\end{align*}
		and
		\begin{align*}
			\|Z(t)\|_{\mathcal{H}_{-1}}
			=
			\sqrt{
				\|z(t)\|^2
				+
				\|z_t(t)\|_{H^{-1}}^2
			}.
		\end{align*}
		
		Since
		\begin{align*}
			\|\nabla u^1_t\|^{\alpha+2},
			\quad
			\|\nabla u^2_t\|^{\alpha+2}
			\in L^1(0,T),
		\end{align*}
		it follows that
		\begin{align*}
			\Theta_{\varepsilon}\in L^1(0,T).
		\end{align*}
		Therefore, applying Gronwall's inequality to \eqref{asymptotic05}, we obtain
		
		\begin{align*}
			E_Z^\varepsilon(t)
			\leq
			E_Z^\varepsilon(0)
			e^{\int_0^t\Theta_{\varepsilon}(s)\,ds}
			+
			C_{\mathcal B}
			\int_0^t
			e^{\int_s^t\Theta_{\varepsilon}(\tau)\,d\tau}
			\|Z(s)\|_{\mathcal{H}_{-1}}^{\frac{\alpha+2}{\alpha+1}}
			\,ds.
		\end{align*}
		
		Since
		\begin{align*}
			\int_s^t\Theta_{\varepsilon}(\tau)\,d\tau
			=
			-\varepsilon(t-s)
			+
			C_{\mathcal B,\varepsilon}
			\int_s^t
			\left[\,
			\|\nabla u^1_t(\tau)\|^{\alpha+2}
			+
			\|\nabla u^2_t(\tau)\|^{\alpha+2}
			\,\right]\,d\tau,
		\end{align*}
		and the trajectories belong to the absorbing set, there exists a constant
		$C_B>0$ such that
		\begin{align*}
			e^{
				C_{\mathcal B,\varepsilon}
				\int_s^t
				\left[\,
				\|\nabla u^1_t(\tau)\|^{\alpha+2}
				+
				\|\nabla u^2_t(\tau)\|^{\alpha+2}
				\,\right]\,d\tau}
			\leq
			C_B,
			\qquad \forall\,0\leq s\leq t.
		\end{align*}
		
		Consequently,
		
		\begin{align}
			\label{asymptotic07}
			E_Z^\varepsilon(t)
			\leq
			C_B e^{-\varepsilon t}
			E_Z^\varepsilon(0)
			+
			C_B
			\int_0^t
			e^{-\varepsilon(t-s)}
			\|Z(s)\|_{\mathcal{H}_{-1}}^{\frac{\alpha+2}{\alpha+1}}
			\,ds.
		\end{align}
		
		Thus, it follows from \eqref{asymptotic07} and \eqref{eq-energy_Ew} that
		
		\begin{align}
			\label{asymptotic08}
			\|Z(t)\|_{\mathcal H}^2
			\leq
			C_B e^{-\varepsilon t}
			\|Z(0)\|_{\mathcal H}^2
			+
			C_B
			\int_0^t
			e^{-\varepsilon(t-s)}
			\|Z(s)\|_{\mathcal H_{-1}}^{\frac{\alpha+2}{\alpha+1}}
			\,ds.
		\end{align}
		Since $Z(t)=S^{\alpha}(t)U^1_0-S^{\alpha}(t)U^2_0$, estimate
		\eqref{entropy-alpha} follows, which completes the proof.
	\end{proof}
	
	\begin{corollary}{\bf[Smoothness property]}\label{Corollary-stab-est}
		Under Assumption
		\ref{Assumption}, the dynamical system $(\mathcal{H},S^{\alpha}(t))$ is asymptotically smooth.
	\end{corollary}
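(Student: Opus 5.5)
The plan is to apply the compactness criterion \cite[Proposition 2.10]{Chueshov} recalled above, with the stabilizability estimate of Proposition~\ref{Prop-stab-est-0} as the main input. Let $B\subset\mathcal H$ be a bounded positively invariant set and fix $\epsilon>0$. For $y_1,y_2\in B$ write $Z(s)=S^\alpha(s)y_1-S^\alpha(s)y_2$. By \eqref{entropy-alpha} and since $B$ is bounded,
\begin{align*}
\|S^\alpha(T)y_1-S^\alpha(T)y_2\|_{\mathcal H}^2\le C_B e^{-\varepsilon T}\,\mathrm{diam}(B)^2+C_B\int_0^T\|Z(s)\|_{\mathcal H_{-1}}^{\frac{\alpha+2}{\alpha+1}}\,ds .
\end{align*}
Choose $T=T(\epsilon,B)$ with $C_Be^{-\varepsilon T}\mathrm{diam}(B)^2\le\epsilon^2$. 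Then $\sqrt{a+b}\le\sqrt a+\sqrt b$ gives the required form $d(S(T)y_1,S(T)y_2)\le\epsilon+\Psi(y_1,y_2)$, where
\begin{align*}
\Psi(y_1,y_2):=\Bigl(C_B\int_0^T\|S^\alpha(s)y_1-S^\alpha(s)y_2\|_{\mathcal H_{-1}}^{\frac{\alpha+2}{\alpha+1}}ds\Bigr)^{1/2}.
\end{align*}

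It remains to verify that $\lim_n\lim_m\Psi(y_n,y_m)=0$ for every sequence $\{y_n\}\subset B$. Write $U^n(t)=(u^n,u^n_t)=S^\alpha(t)y_n$. Since $B$ is positively invariant and bounded, the energy identity \eqref{ei} together with \eqref{4.3} gives two uniform bounds: $\sup_{t\ge0}\|U^n(t)\|_{\mathcal H}\le C_B$, and $\int_0^T\|\nabla u^n_t\|^{\alpha+2}\,dt\le C_B$. From the equation, using \eqref{3.12} and $\|f(u^n)\|_{6/5}\le C_B$, we also get $u^n_{tt}$ bounded in $L^{\frac{\alpha+2}{\alpha+1}}(0,T;H^{-1}(\Omega))$.

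Next apply the Aubin--Lions--Simon lemma. For the first component, take $u^n$ bounded in $L^\infty(0,T;H_0^1)$ with $u^n_t$ bounded in $L^\infty(0,T;L^2)$, and use the compact embedding $H_0^1\hookrightarrow\hookrightarrow L^2$; this gives relative compactness of $\{u^n\}$ in $C([0,T];L^2(\Omega))$. For the second component, take $u^n_t$ bounded in $L^\infty(0,T;L^2)$ with $u^n_{tt}$ bounded in $L^{\frac{\alpha+2}{\alpha+1}}(0,T;H^{-1})$, and use $L^2\hookrightarrow\hookrightarrow H^{-1}$; this gives relative compactness in $C([0,T];H^{-1}(\Omega))$. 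Hence a subsequence of $U^n$ is Cauchy in $C([0,T];\mathcal H_{-1})$.

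The main point requiring care is the order of limits. The standard remedy is to argue by contradiction. Suppose $\limsup_n\limsup_m\Psi(y_n,y_m)>0$ along some sequence. Pass to a subsequence of $\{y_n\}$ that is Cauchy in $C([0,T];\mathcal H_{-1})$; for it, $\Psi(y_n,y_m)\to0$ as $n,m\to\infty$, so the iterated limit vanishes. This yields the property in the form used in Chueshov's criterion, which only requires it along a subsequence; equivalently, one may apply Chueshov's version with $\liminf$. The criterion then shows that $(\mathcal H,S^\alpha(t))$ is asymptotically smooth for every $\alpha\in\mathbb R_+$.
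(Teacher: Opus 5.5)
Your proposal is correct and takes essentially the same route as the paper: you apply Chueshov's criterion with $\Psi$ built from the lower-order integral term of the stabilizability estimate \eqref{entropy-alpha}, and you get contractivity from relative compactness of the trajectories in $C([0,T];\mathcal H_{-1})$ along a subsequence (via Aubin--Lions--Simon with an explicit bound on $u_{tt}$, where the paper invokes Arzel\`a--Ascoli with equicontinuity, which amounts to the same thing). Your write-up is slightly more careful than the paper's on two points: you take the square root so that the inequality bounds the distance itself rather than its square, and you state explicitly that the criterion has to be used in its $\liminf$ (subsequence) form.
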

	\begin{proof}
		
		Given $\epsilon>0$, choose $T=T_{\epsilon,B}>0$ sufficiently large such that
		\begin{align*}
			C_B e^{-\varepsilon T}
			\sup_{U_0^1,U_0^2\in B}
			\|U_0^1-U_0^2\|_{\mathcal H}^2
			<\epsilon.
		\end{align*}
		Then, by \eqref{entropy-alpha}, for every $U_0^1,U_0^2\in B$,
		\begin{align*}
			\|S^\alpha(T)U_0^1-S^\alpha(T)U_0^2\|_{\mathcal H}^2
			\leq
			\epsilon
			+
			C_B
			\int_0^T
			e^{-\varepsilon(T-s)}
			\|S^\alpha(s)U_0^1-S^\alpha(s)U_0^2\|_{\mathcal H_{-1}}^{\frac{\alpha+2}{\alpha+1}}
			\,ds.
		\end{align*}
		Define the functional
		$\Psi_T:\mathcal H\times\mathcal H\to\mathbb R_+$ by
		\begin{align*}
			\Psi_T(U_0^1,U_0^2)
			:=
			C_B
			\int_0^T
			e^{-\varepsilon(T-s)}
			\|S^\alpha(s)U_0^1-S^\alpha(s)U_0^2\|_{\mathcal H_{-1}}^{\frac{\alpha+2}{\alpha+1}}
			\,ds.
		\end{align*}
		It follows that
		\begin{align*}
			\|S^\alpha(T)U_0^1-S^\alpha(T)U_0^2\|_{\mathcal H}^2
			\leq
			\epsilon+\Psi_T(U_0^1,U_0^2),
		\end{align*}
		for all $U_0^1,U_0^2\in B$. It remains to show that $\Psi_T$ is a contractive function on $B$. Let
		$\{U_0^n\}_{n\in\mathbb N}\subset B$. Since $B$ is bounded in $\mathcal H$ and
		the embedding $\mathcal H\hookrightarrow\mathcal H_{-1}$ is compact, while
		$\{S^\alpha(\cdot)U_0^n\}_{n\in\mathbb N}$ is uniformly bounded and
		equicontinuous in $\mathcal H_{-1}$ on $[0,T]$, the Arzel\`a--Ascoli theorem
		yields relative compactness in $C([0,T];\mathcal H_{-1})$. Hence, along a
		subsequence,
		\begin{align*}
			\lim_{n\to\infty}\lim_{m\to\infty}
			\Psi_T(U_0^n,U_0^m)=0.
		\end{align*}
		Therefore,
		\begin{align*}
			\liminf_{n\to\infty}\liminf_{m\to\infty}
			\Psi_T(U_0^n,U_0^m)=0,
		\end{align*}
		and $\Psi_T$ is contractive on $B$.
	\end{proof}

	\subsection{Gradient System}
	\begin{proposition}\label{gds}{\bf[Gradient system]}
		Under Assumption
		\ref{Assumption}, $(\mathcal H,S^{\alpha}(t))$ is a gradient dynamical system.
	\end{proposition}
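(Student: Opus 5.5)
We take the energy functional $E_U$ from \eqref{energy-functional} as the strict Lyapunov function $\Phi(U):=E_U$ on $\mathcal H$. The argument has three steps: continuity of $\Phi$, monotonicity along trajectories, and strictness.

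\emph{Continuity.} On $\mathcal H$ the quadratic part and the term $\int_\Omega hu\,dx$ are obviously continuous. For the nonlinear part we use \eqref{hyp_f'}, which gives $|F(u)-F(v)|\le C(1+|u|^5+|v|^5)|u-v|$. Hölder's inequality and the embedding $H_0^1(\Omega)\hookrightarrow L^6(\Omega)$ then yield
\[
\Bigl|\int_\Omega F(u)-F(v)\,dx\Bigr|\le C\bigl(1+\|\nabla u\|^5+\|\nabla v\|^5\bigr)\|\nabla(u-v)\|,
\]
so $\Phi$ is continuous on $\mathcal H$.

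\emph{Monotonicity.} The energy equality \eqref{ei} of Theorem~\ref{theo-global-solution}(IV), valid for weak solutions, gives
\[
\Phi(S^\alpha(t)U_0)+\int_s^t\|\nabla u_t(\tau)\|^{\alpha+2}\,d\tau=\Phi(S^\alpha(s)U_0),\qquad 0\le s\le t .
\]
Hence $t\mapsto\Phi(S^\alpha(t)U_0)$ is nonincreasing.

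\emph{Strictness.} Suppose $\Phi(S^\alpha(t)U_0)=\Phi(U_0)$ for all $t>0$. The identity above forces $\int_0^t\|\nabla u_t\|^{\alpha+2}d\tau=0$ for every $t$, so $\nabla u_t=0$ a.e. in $(0,\infty)\times\Omega$. Since $u_t\in H^1_0(\Omega)$, Poincaré's inequality gives $u_t(t)=0$ for a.e. $t$. Because $u\in C([0,T];H^1_0(\Omega))$ with $u_t\in L^{\alpha+2}(0,T;H^1_0)$, we get $u(t)=u(0)=u_0$ for all $t$. By continuity of $t\mapsto u_t(t)$ in $L^2$ (from $U\in C([0,T];\mathcal H)$), also $u_t(t)=0$ for all $t$, and in particular $u_1=0$.

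Substituting into the weak formulation \eqref{variational-formula}, the damping term vanishes identically (its coefficient contains $\|\nabla u_t\|^\alpha\nabla u_t=0$, and this holds also for $\alpha=0$). What remains is
\[
t\bigl[(\nabla u_0,\nabla\psi)+(f(u_0),\psi)-(h,\psi)\bigr]=0\qquad\text{for all }t,\ \psi\in H_0^1(\Omega).
\]
Thus $u_0$ is a weak solution of the stationary problem $-\Delta u_0+f(u_0)=h$. Conversely, $(u_0,0)$ is a stationary weak solution of \eqref{P}, and by the uniqueness in Theorem~\ref{theo-global-solution}(II) we conclude $S^\alpha(t)U_0=U_0$ for all $t$, i.e.\ $U_0\in\mathcal N$.

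The only delicate point is the justification through weak solutions, namely passing from $\nabla u_t=0$ a.e. in time to the pointwise statements. This is handled by the continuity $U\in C([0,T];\mathcal H)$ together with uniqueness, as done above.
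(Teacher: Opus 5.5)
Your proof is correct and follows the same route as the paper: $E_U$ serves as the Lyapunov function, the energy equality \eqref{ei} gives monotonicity, and strictness follows from $\int_0^t\|\nabla u_t\|^{\alpha+2}\,d\tau=0$. You are more careful than the paper, which omits both the continuity of $E_U$ on $\mathcal H$ and the step from $u_t\equiv0$ to $u(t)\equiv u_0$, $u_1=0$; once you have these, $S^\alpha(t)U_0=(u_0,0)=U_0$ holds directly, so your final detour through the stationary equation and uniqueness is harmless but unnecessary.
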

	\begin{proof}
		Energy equality \eqref{ei} shows that $E_U(t)=E(U(t))$ is non increasing. Suppose $E(S(t) U_0) = E(U_0)$ for all $t \geq 0$. Then from energy equality \eqref{ei}, we have
		\begin{equation}\label{gs}
			\int_0^t \|\nabla u_t(s)\|^{\alpha+2} \, ds = 0.
		\end{equation}
		Then, using embedding $H_0^1(\Omega) \hookrightarrow L^2(\Omega)$, it follows from \eqref{gs} that
		$$
		\|u_t(t)\| = 0 \quad \text{for almost everywhere } t > 0.
		$$
		Moreover, it follows from $u_t \in C([0, T]; L^2)$ for all $T > 0$ that $\|u_t(t)\| = 0$ for all $t > 0$, which implies that $U_0 \in \mathcal N$, where $\mathcal N$ is the set of stationary points of the dynamical system $(\mathcal H,S^{\alpha}(t))$. Using that
		$$
		U_0 \in \mathcal N \quad \Leftrightarrow \quad S(t)(U_0) = U_0, \quad t > 0,
		$$
		then $E_U(t)$  is a strict Lyapunov functional for the dynamical system $(\mathcal H,S^{\alpha}(t))$.
	\end{proof}

	\section{Entropy Estimates and Fractal Dimension}
	In this Section we reach an estimate for the Kolmogorov's $\epsilon$-entropy of the global attractor $\mathcal A_{\alpha}$ corresponding to the dynamical system $(\mathcal{H},S^{\alpha}(t))$. In order, let us formulate the definition of the Kolmogorov's $\varepsilon$-entropy of a compact set $\mathcal{M}$ in a Hilbert
	space $H$ (see \cite{chueshov-lasiecka-2005, Kolmogorov,Tikhomirov}).
	
	\begin{definition}\rm
		The  {\it Kolmogorov  $\varepsilon$- entropy} ${H}_{\varepsilon}(\mathcal{M})$ of a compact set $\mathcal{M}\subset H$   is given by
		\begin{align*}
			{H}_{\varepsilon}(\mathcal{M})=\ln N(\mathcal{M}, \varepsilon), \quad \varepsilon>0,
		\end{align*}	
		where $N(\mathcal{M}, \varepsilon)$ is the minimal number of closed sets of the diameter not greater than $2 \varepsilon $ which cover the compact $\mathcal{M}.$  The {\it fractal dimension} $\operatorname{dim}_{f} \mathcal{M}$ of $\mathcal{M}$ is defined by the formula
		$$
		\mbox{dim}^{f}_{H}\mathcal{M}=\limsup _{\varepsilon \rightarrow 0} \frac{{H}_{\varepsilon}(\mathcal{M})}{\ln (1 / \varepsilon)}.
		$$
	\end{definition}
	
	The next result that can be found in \cite{chueshov-lasiecka-2005} establishes an estimate for Kolmogorov's $\varepsilon$-entropy $H_{\varepsilon}(\mathcal{M})$ of a compact set $\mathcal{M}\subset H$.
	\begin{theorem}[{\cite[Theorem 4.2]{chueshov-lasiecka-2005}}]\label{theo-kolmog-entrop}
		Let $H$ be a separable Hilbert space and $\mathcal{M}$ be a bounded closed set in $H .$ Assume that there exists a mapping $V: \mathcal{M} \mapsto H$ such that:
		
		\begin{itemize}
			\item[$ 1. $] $\mathcal{M} \subseteq V \mathcal{M};$
			
			\item[$ 2. $]  $V$ is Lipschitz on $\mathcal{M},$ that is, there exists $L>0$ such that
			$$
			\big{\|}V z_{1}-V z_{2}\big{\|} \leq L\big{\|}z_{1}-z_{2}\big{\|}, \quad z_{1}, z_{2} \in \mathcal{M};
			$$
			
			\item[$ 3. $] There exist pseudometrics $\varrho_{1}$ and $\varrho_{2}$ on $H$ such that
			$$
			\big{\|}V z_{1}-V z_{2}\big{\|} \leq g\big{(}\big{\|}z_{1}-z_{2}\big{\|}\big{)}+h\big{(}\big{[}\varrho_{1}\big{(}z_{1}, z_{2}\big{)}^{2}+\varrho_{2}\big{(}V z_{1}, V z_{2}\big{)}^{2}\big{]}^{1 / 2}\big{)}
			$$
			for all $z_{1}, z_{2} \in \mathcal{M},$ where $g, h: \mathbb{R}^{+} \to \mathbb{R}^{+}$ are continuous non-decreasing functions such that
			$$
			g(0)=0, \ g(s)<s, \, s>0, \ s-g(s) \text { is nondecreasing,}
			$$
			and the function $h(s)$ is strictly increasing in the interval $\big{[}0, s_{0}\big{]}$ for some $s_{0}>0$ with  $h(0)=0.$

			\item[$ 4. $]For any $q>0$ and for any closed bounded set $B \subset \mathcal{M}$ the maximal number $m(B, q)$ of elements $x_{j}^{B} \in B$ such
			$$
			\varrho_{1}\big{(}x_{j}^{B}, x_{i}^{B}\big{)}^{2}+\varrho_{2}\big{(}V x_{j}^{B}, V x_{i}^{B}\big{)}^{2}>q^{2},  \ \ i \neq j,  \, i, j=1, \ldots, m(B, q),
			$$
			is finite.
		\end{itemize}
		
		Then $\mathcal{M}$ is a compact set and there exists $0<\varepsilon_{0}<1$ such that for all $\varepsilon \leq \varepsilon_{0}<1,$ Kolmogorov's $\varepsilon$-entropy ${H}_{\varepsilon}(\mathcal{M})$ admits the following estimate
		$$
		{H}_{\varepsilon}(\mathcal{M}) \leq \int_{\varepsilon}^{\varepsilon_{0}} \frac{\ln m\big{(}g_{\delta}^{-1}(s), q(s)\big{)}}{s-g_{\delta}(s)} d s+H_{g_{0}\big{(}\varepsilon_{0}\big{)}}(\mathcal{M}),
		$$
		where $g_{\delta}(s)=\frac{1-\delta}{2} g(2 s)+\delta s$ with arbitrary $\delta \in(0,1),$ the function $q(s)$ is defined by the formula
		$$
		q(s)=\frac{1}{2} h^{-1}\{\delta[2 s-g(2 s)]\}, 0<s<\varepsilon_{0},
		$$
		and
		$$
		m(r, q)=\sup \{m(B, q): B \subseteq \mathcal{M}, \operatorname{diam} B \leq 2 r\}.
		$$
	\end{theorem}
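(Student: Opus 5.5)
The plan is to prove the statement by an iterative covering argument, in the spirit of the method of ``compact seminorms''. First I would record the properties of the auxiliary function $g_\delta(s)=\frac{1-\delta}{2}g(2s)+\delta s$ that the iteration needs. It is continuous and nondecreasing, with $g_\delta(0)=0$. It satisfies $g_\delta(s)<s$ for $s>0$, because $g(2s)<2s$. Finally, $s-g_\delta(s)=\frac{1-\delta}{2}\bigl(2s-g(2s)\bigr)$ is nondecreasing. Consequently, for any $r_0>0$ the sequence $r_{k+1}=g_\delta(r_k)$ decreases to $0$. Indeed, its limit $r_*$ satisfies $g_\delta(r_*)=r_*$, and this forces $r_*=0$.

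\textbf{One refinement step.} Let $B\subseteq\mathcal M$ be closed with $\operatorname{diam}B\le 2r$, and fix $q>0$. By hypothesis 4, choose a maximal family $x_1,\dots,x_m\in B$, with $m\le m(B,q)\le m(r,q)$, such that
$$\varrho_1(x_i,x_j)^2+\varrho_2(Vx_i,Vx_j)^2>q^2 \quad\text{for } i\neq j.$$
By maximality, every $z\in B$ lies within pseudodistance $q$ of some $x_j$, where the pseudodistance is $[\varrho_1^2+\varrho_2^2(V\cdot,V\cdot)]^{1/2}$. This splits $B$ into pieces $P_1,\dots,P_m$, each of pseudo-diameter at most $2q$ by the triangle inequality. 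For $z_1,z_2\in P_j$, hypothesis 3 and the monotonicity of $g$ and $h$ give
$$\|Vz_1-Vz_2\|\le g(2r)+h(2q).$$
I then choose $q$ so that $h(2q)=\delta\bigl[2r-g(2r)\bigr]$. This is possible for small $r$, since $h$ is strictly increasing on $[0,s_0]$ with $h(0)=0$. With this choice, $\operatorname{diam}VP_j\le 2g_\delta(r)$.

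\textbf{Iterating the step.} Hypothesis 1 says $\mathcal M\subseteq V\mathcal M$. Hence, if $\mathcal M$ is covered by closed sets $B^{(1)},\dots,B^{(N)}$ of diameter at most $2r$, then $\mathcal M$ is also covered by the closed sets $\mathcal M\cap\overline{VP^{(i)}_j}$. These have diameter at most $2g_\delta(r)$, and there are at most $N\,m(r,q)$ of them. Setting $N_k:=N(\mathcal M,r_k)$, this yields
$$N_{k+1}\le m\bigl(r_k,q_k\bigr)\,N_k .$$
I would start from $r_0=\varepsilon_0$. Hypothesis 2 together with the boundedness of $\mathcal M$ should make $N_0$ finite for a suitable initial cover; in the final bound this initial term is the one written as $H_{g_0(\varepsilon_0)}(\mathcal M)$. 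It follows that $\mathcal M$ admits finite covers by sets of diameter $2r_k\to 0$. So $\mathcal M$ is totally bounded, and being closed in the Hilbert space $H$, it is compact. Taking logarithms gives
$$H_{r_n}(\mathcal M)\le\sum_{k<n}\ln m(r_k,q_k)+H_{r_0}(\mathcal M).$$

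\textbf{Sum-to-integral conversion.} This is the step I expect to be the main obstacle, because it has to be matched exactly with the parametrization in the statement. Writing each term in the variable $s=r_{k+1}=g_\delta(r_k)$, we have $r_k=g_\delta^{-1}(s)$ and $q_k=q(s)$ in the notation of the statement. One factor of $\tfrac12$ must be absorbed when comparing $h(2q)=\delta[2r-g(2r)]$ with the definition of $q(s)$; this may require evaluating the relation at the new radius rather than the old one, using the monotonicity of $s-g(s)$. The key identity is
$$r_k-r_{k+1}=r_k-g_\delta(r_k),$$
and it lets me write
$$\ln m(r_k,q_k)=\int_{r_{k+1}}^{r_k}\frac{\ln m(r_k,q_k)}{r_k-g_\delta(r_k)}\,ds .$$
I then bound each integrand pointwise on $[r_{k+1},r_k]$ by $\ln m\bigl(g_\delta^{-1}(s),q(s)\bigr)/\bigl(s-g_\delta(s)\bigr)$. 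For this I would use two facts:
\begin{itemize}
\item $m(r,q)$ is nondecreasing in $r$ and nonincreasing in $q$;
\item $s-g_\delta(s)$ is nondecreasing.
\end{itemize}
The delicate point is getting the direction of each monotonicity right simultaneously on every subinterval. Summing over $k$ and using monotonicity of $H_\varepsilon$ in $\varepsilon$ to interpolate between the radii $r_{n+1}\le\varepsilon\le r_n$ then gives the stated entropy bound for all $\varepsilon\le\varepsilon_0$.
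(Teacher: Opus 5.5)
The paper quotes this result from Chueshov--Lasiecka without proof, so I judge your argument on its own. Your one-step refinement is correct and is the heart of the proof. Note that $h(2q)=\delta[2r-g(2r)]$ gives exactly $q=q(r)$, so there is no stray factor $\tfrac12$. Your pointwise comparison of integrands is also right, once you add that $q(\cdot)$ is nondecreasing (because $s-g(s)$ and $h^{-1}$ are).

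\textbf{First gap: the initial cover.} Your compactness proof rests on finiteness of $N(\mathcal M,\varepsilon_0)$, and boundedness of $\mathcal M$ plus Lipschitz continuity of $V$ does not give it. In an infinite-dimensional $H$ a bounded set need not admit finite covers by sets of small diameter. Finiteness of $N(\mathcal M,\varepsilon_0)$ is essentially the compactness you want to prove, so the argument is circular. The only cover available for free is $\{\mathcal M\}$ itself, at radius $R_0=\tfrac12\operatorname{diam}\mathcal M$, and you must run the refinement from there.
\begin{itemize}
\item For such large $r$ the equation $h(2q)=\delta[2r-g(2r)]$ may be unsolvable, since $h$ is only invertible on $[0,s_0]$. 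Instead take any $q>0$ with $h(2q)\le\delta[2r-g(2r)]$; it exists because $h$ is continuous with $h(0)=0$, and it still gives $\operatorname{diam}VP_j\le 2g_\delta(r)$.
\item Use only that $m(B^{(i)},q)<\infty$ for the finitely many sets of the current cover, which is what hypothesis 4 provides; $m(r,q)<\infty$ is not assumed.
\end{itemize}
This yields finite covers at the radii $g_\delta^k(R_0)\to0$. Hence $\mathcal M$ is totally bounded, hence compact, and $H_r(\mathcal M)<\infty$ for every $r>0$.

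\textbf{Second gap: the endpoint.} With the chain anchored at $r_0=\varepsilon_0$ and $r_{n+1}<\varepsilon\le r_n$, monotonicity only gives $H_\varepsilon\le H_{r_{n+1}}\le\int_{r_{n+1}}^{\varepsilon_0}(\cdots)\,ds+H_{\varepsilon_0}$. The surplus $\int_{r_{n+1}}^{\varepsilon}(\cdots)\,ds$ is not controlled: for $\varepsilon$ close to $r_n$ it is essentially $\int_{r_{n+1}}^{r_n}(\cdots)\,ds\ge\ln m(r_n,q(r_n))$, which nothing in the statement absorbs. Shifting the comparison by one interval does not rescue it, since on $[r_k,r_{k-1}]$ one has $q(s)\ge q(r_k)$.

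The remedy is to end the chain exactly at $\varepsilon$. Put $\rho_0=\varepsilon$ and $\rho_{j+1}=g_\delta^{-1}(\rho_j)$; this is well defined because $g_\delta$ is a continuous increasing bijection of $\mathbb R_+$, as $g_\delta(s)\ge\delta s$. Let $N$ be the last index with $\rho_N\le\varepsilon_0$. It exists because the increasing sequence $\rho_j$ cannot converge to a positive fixed point of $g_\delta$. Your one-step bound $H_{\rho_{j-1}}\le\ln m(\rho_j,q(\rho_j))+H_{\rho_j}$ and your comparison on each $[\rho_{j-1},\rho_j]$ then give
\[
H_\varepsilon(\mathcal M)\le\int_\varepsilon^{\rho_N}\frac{\ln m\bigl(g_\delta^{-1}(s),q(s)\bigr)}{s-g_\delta(s)}\,ds+H_{\rho_N}(\mathcal M)\le\int_\varepsilon^{\varepsilon_0}\frac{\ln m\bigl(g_\delta^{-1}(s),q(s)\bigr)}{s-g_\delta(s)}\,ds+H_{g_\delta(\varepsilon_0)}(\mathcal M).
\]
Here you use $\ln m\ge0$ and $\rho_N=g_\delta(\rho_{N+1})>g_\delta(\varepsilon_0)$. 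Since $g_0(\varepsilon_0)<g_\delta(\varepsilon_0)$, the stated bound follows; the middle form is the one used in Theorem~\ref{theo-main2}. This is why the remainder sits at a radius below $\varepsilon_0$: the chain ends at $\varepsilon$, rather than starting at $\varepsilon_0$. Finally, choose $\varepsilon_0$ with $2\delta\varepsilon_0\le h(s_0)$, so that $q(s)$ is defined for all $s\le\varepsilon_0$.
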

	
	\begin{theorem}[{\bf Kolmogorov $\varepsilon$-Entropy}]
		\label{theo-main2}
		Under Assumption \ref{Assumption}, there exists $0<\varepsilon_0<1$ such that,
		for all $\varepsilon\leq\varepsilon_0$, the Kolmogorov $\varepsilon$-entropy
		$H_\varepsilon(\mathcal{A}_\alpha)$ of the global attractor
		$\mathcal{A}_\alpha$ satisfies, for arbitrary $\delta\in(0,1)$,
		\begin{equation}
			\label{eps-entrop1}
			H_\varepsilon(\mathcal{A}_\alpha)
			\leq
			\frac{2}{1-\delta}
			\int_\varepsilon^{\varepsilon_0}
			\frac{\ln m\bigl(g_\delta^{-1}(s),q(s)\bigr)}{s}\,ds
			+
			H_{g_\delta(\varepsilon_0)}(\mathcal{A}_\alpha),
		\end{equation}
		where
		$g_\delta(s)=\frac{1+\delta}{2}s$ and
		$q(s)=\frac{1}{2}(\delta s)^{\frac{2(\alpha+1)}{\alpha+2}}$,
		$0<s<\varepsilon_0$, and
		\begin{equation*}
			m(r,a)
			=
			\sup\left\{
			m(B,a):\ B\subseteq\mathcal{A}_\alpha,\ 
			\operatorname{diam} B\leq 2r
			\right\},
		\end{equation*}
		with $m(B,a)$ denoting the maximal number of elements
		$U_j^B\in B$ such that, for any $a>0$,
		\begin{align*}
			\varrho\bigl(S^{\alpha}(T)U_j^B,S^{\alpha}(T)U_i^B\bigr)>a,
			\qquad i\neq j,\quad i,j=1,\ldots,m(B,a),
		\end{align*}
		for $T>0$ sufficiently large, where $\varrho$ is a pseudometric on
		$\mathcal{H}$.
	\end{theorem}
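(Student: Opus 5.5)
We will apply Theorem~\ref{theo-kolmog-entrop} with $H=\mathcal H$, $\mathcal M=\mathcal A_\alpha$ and $V=S^\alpha(T)$ for a suitably large $T>0$. Since $\mathcal A_\alpha$ is invariant, $S^\alpha(T)\mathcal A_\alpha=\mathcal A_\alpha$, so condition~1 holds. Condition~2 follows from the Lipschitz estimate \eqref{Lipschtz} of Theorem~\ref{theo-global-solution}(III), because $\mathcal A_\alpha$ is bounded in $\mathcal H$.

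The core of the argument is condition~3, which we derive from the stabilizability estimate \eqref{entropy-alpha} of Proposition~\ref{Prop-stab-est-0}. Take $B$ to be a bounded set containing $\mathcal A_\alpha$; the absorbing ball of Proposition~\ref{Prop-absorbing-set} works, with its constant $C_B$. For $U_0^1,U_0^2\in\mathcal A_\alpha$, choose $T$ so large that $C_Be^{-\varepsilon T}\le \frac14$. Taking square roots in \eqref{entropy-alpha} then gives
\[
\|S^\alpha(T)U^1_0-S^\alpha(T)U^2_0\|_{\mathcal H}\le \tfrac12\|U^1_0-U^2_0\|_{\mathcal H}+\varrho(U^1_0,U^2_0)^{\frac{\alpha+2}{2(\alpha+1)}},
\]
where we define the pseudometric
\[
\varrho(U^1_0,U^2_0):=\Big(C_B\int_0^T e^{-\varepsilon(T-s)}\|S^\alpha(s)U^1_0-S^\alpha(s)U^2_0\|_{\mathcal H_{-1}}^{\frac{\alpha+2}{\alpha+1}}ds\Big)^{\frac{\alpha+1}{\alpha+2}}.
\]
This is the $L^{(\alpha+2)/(\alpha+1)}$-weighted seminorm of the difference of trajectories, so it satisfies the triangle inequality by Minkowski. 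We then set $\varrho_1=\varrho$, $\varrho_2=0$, $g(s)=s/2$ and $h(s)=s^{\frac{\alpha+2}{2(\alpha+1)}}$. These choices satisfy the hypotheses: $g(0)=0$, $g(s)<s$, $s-g(s)=s/2$ is nondecreasing, and $h$ is strictly increasing with $h(0)=0$.

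The step needing most care is condition~4, the finiteness of $m(B,q)$. By the Arzelà--Ascoli argument from the proof of Corollary~\ref{Corollary-stab-est}, the set $\{S^\alpha(\cdot)U_0: U_0\in\mathcal A_\alpha\}$ is relatively compact in $C([0,T];\mathcal H_{-1})$. This uses the compact embedding $\mathcal H\hookrightarrow\mathcal H_{-1}$ together with equicontinuity in $\mathcal H_{-1}$, which comes from the bound on $u_{tt}$ in $H^{-1}$ along trajectories. The map $U_0\mapsto S^\alpha(\cdot)U_0$ into $C([0,T];\mathcal H_{-1})$ controls $\varrho$, since $\varrho(U_0^1,U_0^2)\le C_{B,T}\sup_{[0,T]}\|S^\alpha(s)U_0^1-S^\alpha(s)U_0^2\|_{\mathcal H_{-1}}$. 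Hence $\mathcal A_\alpha$ is totally bounded for $\varrho$, and any $q$-separated family must be finite.

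Finally we plug the choices into the entropy bound of Theorem~\ref{theo-kolmog-entrop}. We get $g_\delta(s)=\frac{1-\delta}{2}s+\delta s=\frac{1+\delta}{2}s$, so $s-g_\delta(s)=\frac{1-\delta}{2}s$, which produces the factor $\frac{2}{1-\delta}\cdot\frac1s$ in \eqref{eps-entrop1}. Next, $2s-g(2s)=s$, so $h^{-1}(\delta s)=(\delta s)^{\frac{2(\alpha+1)}{\alpha+2}}$, which gives the stated $q(s)$. With $\varrho_2=0$, the quantity $m(B,a)$ counts points that are $a$-separated in $\varrho$; this is the pseudometric separation of the evolved states described in the statement.

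The main obstacle is the bookkeeping around condition~4. We must make sure the pseudometric is honestly a pseudometric, which is why we take the $\frac{\alpha+1}{\alpha+2}$ power. We must also match the statement's formulation, which measures separation through $S^\alpha(T)$. Should that formulation be intended, we would instead take $\varrho_1=0$ and $\varrho_2=\varrho\circ(S^\alpha(T)^{-1}\text{-free version})$, i.e. a pseudometric evaluated at the images. We will first try the simpler $\varrho_1=\varrho$ route and adjust the notation to match.
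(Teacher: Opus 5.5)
Your proof is correct and follows the paper's route. Both apply Theorem~\ref{theo-kolmog-entrop} with $\mathcal M=\mathcal A_\alpha$, $V=S^\alpha(T)$, $g(s)=s/2$ and $h(s)=s^{\frac{\alpha+2}{2(\alpha+1)}}$. Both obtain condition~2 from \eqref{Lipschtz} and condition~3 from \eqref{entropy-alpha}, with $T$ chosen so that $C_Be^{-\varepsilon T}\le\frac14$.

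The difference lies only in how the compact term is packaged.

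\textbf{The paper's packaging.} It bounds the integral in \eqref{entropy-alpha} by $C\sup_{0\le s\le T}\|Z(s)\|_{\mathcal H_{-1}}$, which is a pseudometric with no Minkowski or power adjustment. It then places this quantity in the slot $\varrho_2\bigl(S^\alpha(T)U_0^1,S^\alpha(T)U_0^2\bigr)$ with $\varrho_1\equiv0$.

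\textbf{Your packaging.} You keep the weighted $L^{\frac{\alpha+2}{\alpha+1}}$ form, correctly raised to the power $\frac{\alpha+1}{\alpha+2}$, and put it in $\varrho_1$ with $\varrho_2\equiv0$.

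Your placement is the consistent one. The quantity depends on the whole difference $z(s)$ for $s\in[0,T]$, that is, on the pair of initial data. It is a well-defined function of the images only if $S^\alpha(T)$ is injective on $\mathcal A_\alpha$, which is never established.

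So do not pursue your fallback of moving $\varrho$ into the $\varrho_2$ slot. Without backward uniqueness, a pseudometric evaluated at the images alone cannot carry the information on $[0,T]$ that condition~3 needs. Instead, read the statement's $\varrho\bigl(S^\alpha(T)U_j^B,S^\alpha(T)U_i^B\bigr)$ as the trajectory pseudometric of the pair $(U_j^B,U_i^B)$ on $[0,T]$. That is exactly your $\varrho_1$, and your counting function $m(B,a)$ is then the intended one.

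Condition~4 is also simpler than your Arzel\`a--Ascoli argument. $\mathcal A_\alpha$ is already compact in $\mathcal H$, and by \eqref{Lipschtz} you have $\varrho(U_0^1,U_0^2)\le C\|U_0^1-U_0^2\|_{\mathcal H}$ on $\mathcal A_\alpha$. Hence $\mathcal A_\alpha$ is totally bounded for $\varrho$, and every $q$-separated family is finite.
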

	
	\begin{proof}
		Let us consider two solution trajectories
		\begin{align*}
			S^\alpha(t)U_0^1
			=
			\bigl(u^1(t),u_t^1(t)\bigr),
			\qquad
			S^\alpha(t)U_0^2
			=
			\bigl(u^2(t),u_t^2(t)\bigr),
		\end{align*}
		corresponding to the initial data
		\begin{align*}
			U_0^1
			=
			\bigl(u_0^1,u_1^1\bigr),
			\qquad
			U_0^2
			=
			\bigl(u_0^2,u_1^2\bigr)
			\in\mathcal{A}_\alpha,
		\end{align*}
		and let $z:=u^1-u^2$.
		
		Since $\mathcal{A}_\alpha$ is compact (and hence bounded) and invariant
		under $S^\alpha(t)$, that is,
		$
		S^\alpha(t)\mathcal{A}_\alpha=\mathcal{A}_\alpha,$
		we have
		$S^\alpha(t)U_0^1,S^\alpha(t)U_0^2\in\mathcal{A}_\alpha$
		for all $t\geq0$.
		
		From Proposition~\ref{Prop-stab-est-0}, with $B=\mathcal{A}_\alpha$
		and $C_B=C_{\mathcal{A}_\alpha}$, there exists
		$T=T(\mathcal{A}_\alpha)>0$ such that
		\begin{align}
			\label{kolmog-1}
			\begin{aligned}
				\|S^\alpha(T)U_0^1-S^\alpha(T)U_0^2\|_{\mathcal{H}}
				&\leq
				\frac12\|U_0^1-U_0^2\|_{\mathcal{H}}
				\\
				&\quad+
				\left[
				C_{\mathcal{A}_\alpha}
				\sup_{0\leq s\leq T}
				\sqrt{
					\|z(s)\|^2
					+
					\|z_t(s)\|_{H^{-1}}^2
				}
				\right]^{\frac{\alpha+2}{2(\alpha+1)}},
			\end{aligned}
		\end{align}
		for all $U_0^1,U_0^2\in\mathcal{A}_\alpha$.
		
		Moreover, by Theorem~\ref{theo-global-solution}, there exists a constant $C>0$
		such that
		\begin{align}\label{kolmog-2}
			\|S^\alpha(T)U_0^1-S^\alpha(T)U_0^2\|_{\mathcal H}
			\leq
			C\|U_0^1-U_0^2\|_{\mathcal H},
			\qquad
			\forall\,U_0^1,U_0^2\in\mathcal A_\alpha.
		\end{align}
		
		Therefore, it follows from
		\eqref{kolmog-1}--\eqref{kolmog-2} that assumptions~1--4 of
		Theorem~\ref{theo-kolmog-entrop} are fulfilled with
		\begin{eqnarray*}
			\begin{array}{l}
				\mathcal{M}:=\mathcal{A}_\alpha,
				\qquad
				V:=S^\alpha(T),
				\qquad
				g(s)=\frac12s,
				\qquad
				h(s):=s^{\frac{\alpha+2}{2(\alpha+1)}},
				\qquad
				\varrho_1\equiv0,
				\\[2mm]
				\displaystyle
				\varrho_2\bigl(S^\alpha(T)U_0^1,S^\alpha(T)U_0^2\bigr)
				:=
				C_{\mathcal{A}_\alpha}
				\sup_{0\leq s\leq T}
				\sqrt{
					\|z(s)\|^2+\|z_t(s)\|_{H^{-1}}^2
				}.
			\end{array}
		\end{eqnarray*}
		It is worth noting that, due to the compact embedding
		$\mathcal{H}\hookrightarrow\hookrightarrow\mathcal{H}_{-1}$,
		the function $\varrho:=\varrho_2$ is a compact seminorm on $\mathcal{H}$.
		Indeed, if $X_n\rightharpoonup0$ weakly in $\mathcal{H}$, then
		$X_n\to0$ strongly in $\mathcal{H}_{-1}$, and the uniform continuity of
		the trajectories in $\mathcal{H}_{-1}$ yields
		$\varrho(X_n)\to0$. Thus, item~4 of
		Theorem~\ref{theo-kolmog-entrop} follows by a standard argument; see, e.g.,
		\cite[p.~55]{chueshov-lasiecka-2005}.
		Hence, the estimate for the Kolmogorov $\varepsilon$-entropy
		$H_\varepsilon(\mathcal{A}_\alpha)$ given in
		\eqref{eps-entrop1} follows from the conclusion of
		Theorem~\ref{theo-kolmog-entrop}.
	\end{proof}

	\begin{remark}\rm
		For $\alpha=0$, estimate~\eqref{asymptotic08} reduces to the
		quasi-stability estimate in the sense of \cite[Definition 7.9.2]{Chueshov}.
		Hence, all the hypotheses of \cite[Theorem 7.9.6]{Chueshov} are satisfied,
		and consequently the global attractor $\mathcal{A}_0$ has finite fractal
		dimension.
	\end{remark}
	
	\section{Smoothness of Global Attractors}
	\begin{remark}\rm
		As a direct consequence of Proposition~\ref{Prop-absorbing-set}, the global attractor
		$\mathcal A_\alpha$ is contained in the bounded absorbing set $\mathcal B$, namely,
		$
		\mathcal A_\alpha\subset\mathcal B.
		$
		Since $\mathcal A_\alpha$ is fully invariant under the semigroup
		$\{S^\alpha(t)\}_{t\ge0}$, every complete trajectory
		$\{U(t):t\in\mathbb R\}\subset\mathcal A_\alpha$
		remains in $\mathcal A_\alpha$ for all $t\in\mathbb R$. Consequently,
		$$
		U(t)\in\mathcal A_\alpha\subset\mathcal B,
		\qquad \forall\,t\in\mathbb R,
		$$
		which implies
		\begin{align}\label{unif-bound-H}
			\sup_{t\in\mathbb R}||U(t)||_{\mathcal H}\le R.
		\end{align}
		Hence, every complete trajectory lying on the global attractor is uniformly bounded in the phase space $\mathcal H$.
	\end{remark}
	
	Our next result shows that these trajectories are, in fact, uniformly bounded in the more regular phase space $\mathcal H_1$.
	\begin{theorem}\label{theo-regularity}{\bf[Regularity]}
		Under Assumption
		\ref{Assumption}, every complete trajectory
		$\{U(t):t\in\mathbb{R}\}$ contained in the global attractor satisfies
		\begin{align}\label{regularity-H1}
			\|U(t)\|_{\mathcal H_1}^2\le R_1^2,
		\end{align}
		for all $t\in\mathbb{R}$, where the constant $R_1$ is independent of $\alpha\ge0$.
	\end{theorem}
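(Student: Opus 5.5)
We propose to obtain the bound \eqref{regularity-H1} by an $\mathcal H_1$-energy estimate along complete trajectories that is run backward from $-\infty$, exploiting the fact that on the attractor the dissipation integral is uniformly small on long intervals. The ingredients are the second (higher-order) Galerkin estimate \eqref{Est3-e} and the energy identity \eqref{ei}. Fix a complete trajectory $U(t)=(u(t),u_t(t))\subset\mathcal A_\alpha$. By Theorem~\ref{theo-carct}, $E_U(t)$ converges to equilibrium energy levels as $t\to\pm\infty$, and by \eqref{unif-bound-H} it is bounded uniformly in $\alpha$ (equilibria lie in $\mathcal B$). Hence \eqref{ei} gives
\[
\int_{-\infty}^{\infty}\|\nabla u_t(s)\|^{\alpha+2}\,ds\le \sup_{\mathcal B}E-\inf_{\mathcal B}E=:D_0,
\]
with $D_0$ independent of $\alpha$. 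In particular, for any $\eta>0$ there is $t_\eta$ with $\int_{-\infty}^{t_\eta}\|\nabla u_t\|^{\alpha+2}ds<\eta$.

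The main step is an $\mathcal H_1$ bound via a \emph{perturbed} higher-order functional. Since \eqref{Est3-e} only gives Gronwall growth, we add $\varepsilon(u_t,-\Delta u)$ to $E^1_U$, i.e.\ we multiply \eqref{P} by $-\Delta u_t-\varepsilon\Delta u$ (justified first on strong solutions, then by density). This yields a dissipative term $\varepsilon\|\Delta u\|^2$. The source term $(f(u),-\Delta u)=\int f'(u)|\nabla u|^2$ is controlled from below by $-K_f\|\nabla u\|^2\ge -C_R$. The problematic term is $\varepsilon\|\nabla u_t\|^\alpha(\nabla u_t,\Delta\nabla u)$-type cross terms, namely $\varepsilon\|\nabla u_t\|^\alpha(\Delta u_t,\Delta u)$. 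We handle it as
\[
\tfrac12\|\nabla u_t\|^\alpha\|\Delta u_t\|^2+C\varepsilon^2\|\nabla u_t\|^{\alpha}\|\Delta u\|^2,
\]
and the last piece is absorbed into a time-integrable coefficient. The term $\varepsilon\|\nabla u_t\|^2$ is bounded by $C(1+\|\nabla u_t\|^{\alpha+2})$. We expect to reach
\[
\frac{d}{dt}\mathcal E_1+\varepsilon\mathcal E_1\le C_R\bigl(1+\|\nabla u_t\|^{\alpha+2}+\|\nabla u_t\|^{\alpha}\bigr)\mathcal E_1\cdot\chi+C_R,
\]
where $\mathcal E_1\simeq\|U\|^2_{\mathcal H_1}$ by \eqref{equiv_E1}. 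For $0<\alpha$, Young's inequality gives $\|\nabla u_t\|^\alpha\le\delta+C_\delta\|\nabla u_t\|^{\alpha+2}$, so the coefficient becomes $\varepsilon/2+C\|\nabla u_t\|^{\alpha+2}$. Gronwall from time $s$ to $t$ then gives
\[
\mathcal E_1(t)\le e^{-\frac\varepsilon2(t-s)+CD_0}\mathcal E_1(s)+C e^{CD_0}.
\]

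The obstacle is that $\mathcal E_1(s)$ need not be finite for a merely $\mathcal H$-valued trajectory. To get around this, we would use the standard splitting $u=v+w$ on the attractor: $w$ solves a linear problem with $f$ replaced by $f(u)-f(v)$-free data and decays in $\mathcal H$, while $v$ carries the $\mathcal H_1$ regularity. Alternatively, and as a first attempt, we would use the smoothing of Kelvin--Voigt damping directly: on intervals where $\int\|\nabla u_t\|^{\alpha+2}$ is small, \eqref{Est3-e} applied to the difference with an $\mathcal H_1$ approximation combined with the Lipschitz estimate \eqref{Lipschtz} lets one start from approximating strong data. Uniformity in $\alpha$ would rest on $D_0$, $R$ and $K_f$ being $\alpha$-independent. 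The delicate point is the constant $C_\alpha$ in the interpolation step, which must be tracked (or avoided by keeping $\|\nabla u_t\|^\alpha\|\Delta u_t\|^2$ unsplit). Letting $s\to-\infty$ then gives $\|U(t)\|_{\mathcal H_1}^2\le 4Ce^{CD_0}=:R_1^2$.
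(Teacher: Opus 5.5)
Your dissipative $\mathcal H_1$ inequality is sound as an \emph{a priori} estimate; structurally it is the paper's \eqref{reg-H2-4}, obtained there with the perturbation $\beta(\nabla w_t,\nabla w)$. However, the argument has a genuine gap exactly at the obstacle you name, and neither remedy you sketch closes it.

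To let $s\to-\infty$ you need $U(s_k)\in\mathcal H_1$ along some $s_k\to-\infty$ with $e^{-c\varepsilon|s_k|}\|U(s_k)\|_{\mathcal H_1}^2\to0$. For a trajectory that is a priori only $\mathcal H$-valued, this is of the same nature as what you are trying to prove.

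The approximation route fails quantitatively. Replacing $U(s)$ by $V\in\mathcal H_1$ with $\|V-U(s)\|_{\mathcal H}\le\eta$ costs a factor $e^{C(t-s)}$ in \eqref{Lipschtz}, whose constant grows exponentially with the length of the interval. Meanwhile $M(\eta):=\|V\|_{\mathcal H_1}\to\infty$ with no rate. Making both $e^{C(t-s)}\eta$ and $e^{-c\varepsilon(t-s)}M(\eta)^2$ small would require $M(\eta)\lesssim\eta^{-\gamma}$ with $\gamma<c\varepsilon/(2C+2)$. That amounts to a priori fractional regularity of $\mathcal A_\alpha$, which you do not have.

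The splitting route fails structurally, in either natural version:
\begin{itemize}
\item If $w$ carries its own damping $\|\nabla w_t\|^\alpha\Delta w_t$, the $v$-equation contains $(\|\nabla u_t\|^\alpha-\|\nabla w_t\|^\alpha)\Delta w_t$. This cannot be paired with $-\Delta v_t$, because $w$ is only in $\mathcal H$.
\item If you freeze $a(t)=\|\nabla u_t(t)\|^\alpha$ and take $w_{tt}-\Delta w-a(t)\Delta w_t=0$, then $w$ has no uniform decay in $\mathcal H$. A low mode $\lambda_k$ is damped at a rate of order $a(t)\lambda_k$. The very bound $\int_{\mathbb R}\|\nabla u_t\|^{\alpha+2}\,ds\le D_0$ that you exploit gives $a(t)<\eta^\alpha$ outside a set of measure at most $D_0\eta^{-(\alpha+2)}$. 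So for $\alpha>0$ the $w$-problem is almost undamped on all of $(-\infty,t]$ except a set of finite measure.
\end{itemize}
In addition, at quintic growth $f(u)$ lies only in $L^{6/5}(\Omega)$ for $u\in H_0^1(\Omega)$. So the smooth part does not get an $\mathcal H_1$ bound simply by testing with $-\Delta v_t$, even when $\alpha=0$.

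You cannot borrow the missing step from the paper either. There $w$ is the solution issued from $(u^*,0)$ with $u^*$ an $H^2$ equilibrium, so $w\equiv u^*$ by uniqueness. Its Step~2 therefore only shows $\|U(t)-(u^*,0)\|_{\mathcal H}\to0$ as $t\to-\infty$. That is not $U(T_0)\in\mathcal H_1$, which Step~3 nevertheless takes as its starting point, and the forward bound obtained there depends on $\|U(T_0)\|_{\mathcal H_1}$. What is actually needed is a genuine smoothing mechanism, for instance a set bounded in $\mathcal H_1$ that attracts $\mathcal A_\alpha$ in $\mathcal H$; invariance and weak lower semicontinuity would then yield \eqref{regularity-H1}. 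The degeneracy of the damping near equilibria is precisely what makes this hard for $\alpha>0$.

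Finally, your constants are not uniform in $\alpha\ge0$ as claimed:
\begin{itemize}
\item For small $\alpha$, the best $C_\delta$ in $\|\nabla u_t\|^\alpha\le\delta+C_\delta\|\nabla u_t\|^{\alpha+2}$ is of order $\alpha\,\delta^{-2/\alpha}$, which blows up as $\alpha\downarrow0$. Use $x^\alpha\le1+x^{\alpha+2}$ instead, and absorb $\frac{\varepsilon^2}{2}\|\Delta u\|^2$ into $\varepsilon\|\Delta u\|^2$.
\item In the $f''$ term, the splitting $\|\nabla u_t\|\le\delta+C_\delta'\|\nabla u_t\|^{\alpha+2}$ has $C_\delta'$ growing like $\delta^{-(\alpha+1)}$. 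Since $\delta$ must be small, this grows exponentially in $\alpha$, and it enters $R_1$ through $\exp(C_\delta' D_0)$.
\end{itemize}
So your $R_1$ is uniform only for $\alpha$ in bounded sets. That suffices for Section~7, but not for the statement as written.
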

	\begin{proof}
		To establish the regularity of the family of attractors in $\mathcal{H}_1$, we follow the general strategy introduced by Chueshov and Lasiecka in
		\cite[Section~4.2]{chueshov-lasiecka-2007}; see also \cite{zelik},
		while adapting the arguments to the present wave equation with nonlocal Kelvin--Voigt damping.
		The proof is divided into three steps.
		First, we establish the $H^2$-regularity of the stationary solutions associated with
		\eqref{P}.
		Next, we use this property to derive the higher regularity of complete trajectories as
		$t\to-\infty$.
		Finally, given a complete trajectory satisfying
		$U(T_0)\in\mathcal H_1$ for some $T_0<0$, we show that this regularity propagates forward in time, yielding
		$U(t)\in\mathcal H_1$ for every $t\in\mathbb R$.
		
		\subsection*{Step 1: Regularity of Stationary Solutions}
		A stationary solution is a time-independent solution of problem \eqref{P}.
		Thus, it satisfies the associated elliptic problem
		\begin{equation}\label{stationary-problem}
			\left\{
			\begin{aligned}
				-\Delta u+f(u)&=h,
				&&\text{in }\Omega,\\
				u&=0,
				&&\text{on }\Gamma.
			\end{aligned}
			\right.
		\end{equation}
		Testing the stationary problem \eqref{stationary-problem} with
		$-\Delta u$ and integrating over $\Omega$, we obtain
		\begin{equation}\label{est-2}
			\|\Delta u\|^2
			-\int_{\Omega}f(u)\Delta u\,dx
			=
			-\int_{\Omega}h\,\Delta u\,dx.
		\end{equation}
		Using Assumption~\eqref{hyp_f'} and integrating by parts, we obtain
		$$
		-\int_{\Omega}f(u)\Delta u\,dx
		=
		\int_{\Omega}f'(u)|\nabla u|^2\,dx
		\ge
		-K_f\|\nabla u\|^2,
		$$
		where $K_f$ is the constant introduced in Remark~\ref{lowerbounded}; see \eqref{lower1}. Moreover, by H\"older's and Young's inequalities,
		$$
		\left|
		\int_{\Omega}h\,\Delta u\,dx
		\right|
		\le
		\frac12\|h\|^2
		+\frac12\|\Delta u\|^2.
		$$
		Combining the above estimates with \eqref{est-2}, we arrive at
		$$
		\|\Delta u\|^2
		\le
		\|h\|^2
		+
		2K_f\|\nabla u\|^2.
		$$
		Since the set of stationary solutions is contained in the global attractor $\mathcal A_\alpha$, applying \eqref{unif-bound-H} to the corresponding constant trajectories gives
		\begin{equation}\label{est-8}
			\|\Delta u\|^2
			\le
			\|h\|^2
			+
			2K_fC_R
			=: \varrho.
		\end{equation}
		Finally, the elliptic estimate yields
		$$
		\|u\|_{H^2(\Omega)}
		\le
		C\|\Delta u\|
		\le
		C\sqrt{\varrho},
		$$
		which proves that the set of stationary solutions is bounded in $H^2(\Omega)$.
		
		\subsection*{Step 2: Smoothness on negative time scale}
		For each $\alpha\ge0$, let
		$U(t)=(u(t),u_t(t))$, $t\in\mathbb{R}$, be a complete trajectory on the global attractor
		$\mathcal{A}_{\alpha}=\mathcal{M}^{u}_{\alpha}(\mathcal{N})$. We write
		$u=w+z$, where $W=(w,w_t)$ is chosen as the solution of the following problem:
		\begin{eqnarray}\left\{\begin{array}{l}\label{prob-H2-sol-1}
				w_{tt} -\Delta w - \|\nabla w_t\|^{\alpha}\Delta w_t + f(w) = h,\quad\mbox{in}\quad\Omega \times (s,T)  \\
				w=0,\quad \mbox{on}\quad\Gamma,\\
				W(s)==(u^*,0),
			\end{array}\right.\end{eqnarray}
		where {\bf $u^*\in H^2$ is a stationary solution of \eqref{P}}, and $Z=(z,z_t)$ satisfying the following problem
		\begin{eqnarray}\left\{\begin{array}{l}\label{prob-H2-sol-2}
				z_{tt} -\Delta z -\left[\,\|\nabla u_t\|^{\alpha}\Delta u_t-\|\nabla w_t\|^{\alpha}\Delta w_t\right]=-[\,f(u)-f(w)\,], \quad\mbox{in}\quad\Omega \times (s,T),\\
				z=0,\quad\mbox{on}\quad\Gamma,\\
				(z(s),z_t(s)) = (u(s)-u^*,u_t(s)).
			\end{array}\right.\end{eqnarray}
		\paragraph{\it {Estimate 1}.}
		The estimates below are formal, but they can be rigorously justified by means of a standard Galerkin approximation argument. Taking the inner product in $L^2(\Omega)$ of the equation in \eqref{prob-H2-sol-1} with $-\Delta w_t$, we obtain
		\begin{eqnarray}\label{reg-H2-1}
			\begin{aligned}
				&\frac{d}{dt}\left[
				\frac12\|W\|_{\mathcal H_1}^2
				+\frac12\int_\Omega f'(w)|\nabla w|^2\,dx
				+\int_\Omega h\,\Delta w\,dx
				\right]
				+\|\nabla w_t\|^\alpha\|\Delta w_t\|^2
				\\
				&\qquad
				=
				\frac12\int_\Omega
				f''(w)w_t|\nabla w|^2\,dx.
			\end{aligned}
		\end{eqnarray}
		Next, taking the inner product in $L^2(\Omega)$ of the equation in \eqref{prob-H2-sol-1} with $-\beta\Delta w$, where $\beta>0$, yields
		\begin{eqnarray}\label{reg-H2-2}
			\begin{aligned}
				&\beta\frac{d}{dt}
				(\nabla w_t,\nabla w)
				+\beta\|\Delta w\|^2
				+\beta\int_\Omega
				f'(w)|\nabla w|^2\,dx
				+\beta\int_\Omega
				h\,\Delta w\,dx
				\\
				&\qquad
				=
				\beta\|\nabla w_t\|^2
				-\beta\|\nabla w_t\|^\alpha
				\int_{\Omega}\nabla w\nabla w_t\,dx.
			\end{aligned}
		\end{eqnarray}
		Introducing the Lyapunov functional
		$$
		E_W(t):=
		\frac12\|W\|_{\mathcal H_1}^2
		+\frac12\int_\Omega
		f'(w)|\nabla w|^2\,dx
		+\int_\Omega
		h\,\Delta w\,dx
		+\beta\int_\Omega
		\nabla w_t\nabla w\,dx,
		$$
		and combining \eqref{reg-H2-1} and \eqref{reg-H2-2}, we obtain
		\begin{eqnarray}\label{reg-H2-12}
			\begin{aligned}
				&\frac{d}{dt}E_W(t)
				+\beta E_W(t)
				+\frac{\beta}{2}\|\Delta w\|^2
				+\|\nabla w_t\|^\alpha\|\Delta w_t\|^2
				\\
				&\qquad
				=
				\frac{3\beta}{2}\|\nabla w_t\|^2
				+\frac12\int_\Omega
				f''(w)w_t|\nabla w|^2\,dx
				\\
				&\qquad\quad
				-\beta\|\nabla w_t\|^\alpha
				\int_\Omega
				\nabla w\nabla w_t\,dx
				+\beta^2
				\int_\Omega
				\nabla w_t\nabla w\,dx.
			\end{aligned}
		\end{eqnarray}
		Note that, from dissipative condition \ref{hyp-inf-f}, Holder inequality, and embedding $H^2(\Omega)\hookrightarrow H^1_0(\Omega)$, we can estimate the functional $E_W(t)$ from below as follows
		\begin{eqnarray}\label{reg-H2-3}
			\begin{aligned}E_W(t)\ge &\;\frac{1}{4}\|\Delta w\|^2+\frac{1}{2}\|\nabla w_t\|^2-\|h\|^2-\frac{K_f+\beta^2}{2}\|\nabla w\|^2,
		\end{aligned}\end{eqnarray}
		where $K_f$ is given by \eqref{lower1} in Remark~\ref{lowerbounded}. So, we define the perturbed functional $\widetilde{E}_W(t)$ by
		\begin{align*}\widetilde{E}_W(t):=E_W(t)+\frac{K_f+\beta^2}{2}\|\nabla w\|^2+\|h\|^2,\end{align*}
		it follows from \eqref{reg-H2-3} that
		\begin{eqnarray}\label{reg-H2-33}\widetilde{E}_W(t)\ge \frac{1}{4}||W||^2_{\mathcal{H}_1}.
		\end{eqnarray}
		Now, returning to \eqref{reg-H2-12}, we obtain
		\begin{eqnarray}\label{reg-H2-123}
			\begin{aligned}
				&\frac{d}{dt}\widetilde{E}_W(t)+\beta\widetilde{E}_W(t)+\frac{\beta}{2}\|\Delta w\|^2+\|\nabla w_t\|^{\alpha}\|\Delta w_t\|^2\\
				&= \beta\|h\|^2+\frac{3\beta}{2}\|\nabla w_t\|^2+\beta\int_{\Omega}\nabla w_t\nabla wdx+(K_f+\beta^2)\int_{\Omega}\nabla w_t\nabla wdx\\
				&\quad+\frac{\beta(K_f+\beta^2)}{2}\|\nabla w\|^2-\beta\|\nabla w_t\|^{\alpha}\int_{\Omega}\nabla w\nabla w_tdx+\frac{1}{2}\int_{\Omega}f''(w)w_t|\nabla w|^2dx.
			\end{aligned}
		\end{eqnarray}
		Next, we estimate the terms on the right-hand side of \eqref{reg-H2-123}. First, by Poincar\'e's inequality,
		$$
		\lambda_1\|\nabla w_t\|^2\le \|\Delta w_t\|^2,
		$$
		which implies that
		$$
		\lambda_1\|\nabla w_t\|^{\alpha+2}
		=\lambda_1\|\nabla w_t\|^\alpha\|\nabla w_t\|^2
		\le
		\|\nabla w_t\|^\alpha\|\Delta w_t\|^2.
		$$
		On the other hand, using H\"older's and Young's inequalities together with the uniform estimate \eqref{unif-bound-H}, we obtain
		\begin{eqnarray*}
			&& \beta\|h\|^2+\frac{3\beta}{2}\|\nabla w_t\|^2+\beta\int_{\Omega}\nabla w_t\nabla w\,dx+(K_f+\beta^2)\int_{\Omega}\nabla w_t\nabla w\,dx\\
			&&\quad+\frac{\beta(K_f+\beta^2)}{2}\|\nabla w\|^2-\beta\|\nabla w_t\|^{\alpha}\int_{\Omega}\nabla w\nabla w_t\,dx\\
			&&\le \lambda_1\|\nabla w_t\|^{\alpha+2}+C\left[1+\|\nabla w\|^{\frac{\alpha+2}{\alpha+1}}+\|\nabla w\|^{2}+\|\nabla w\|^{\alpha+2}\right]\le \lambda_1\|\nabla w_t\|^{\alpha+2}+C_{R}.
		\end{eqnarray*}
		Finally, by Assumption \eqref{hyp_f''}, H\"older's inequality with
		$\frac{1}{2}+\frac{1}{6}+\frac{1}{3}=1$, the embeddings
		$H^2(\Omega)\hookrightarrow H^1_0(\Omega)\hookrightarrow L^6(\Omega)$,
		the uniform bound \eqref{unif-bound-H}, and \eqref{reg-H2-33}, we have
		\begin{eqnarray*}
			\frac{1}{2}\int_{\Omega}f''(w)w_t|\nabla w|^2\,dx
			&\le&
			C\left(1+\|w\|_6^3\right)\|w_t\|_6\|\nabla w\|_6^2\\
			&\le&
			C_R\|\nabla w_t\|\|\Delta w\|^2\le
			\frac{\beta}{2}\widetilde{E}_W(t)
			+d(t,w)\widetilde{E}_W(t),
		\end{eqnarray*}
		where
		$
		d(t,w):=C_R\|\nabla w_t(t)\|^{\alpha+2}.
		$
		
		Thus, using the last three inequalities, we conclude from \eqref{reg-H2-123} that
		\begin{eqnarray}\label{reg-H2-4}
			\frac{d}{dt}\widetilde{E}_W(t)+\frac{\beta}{2}\widetilde{E}_W(t)\le d(t,w)\widetilde{E}_W(t)+C_{R},
		\end{eqnarray}
		Multiplying \eqref{reg-H2-4} by the integrating factor $e^{\frac{\beta}{2}t}$ and integrating from $s$ to $t$, we obtain
		\begin{eqnarray}\label{reg-H2-6}
			\begin{aligned}
				\widetilde{E}_W(t)e^{\frac{\beta}{2}t}\le e^{\frac{\beta}{2}s}\widetilde{E}_w(s)+\int_s^td(\tau,w)e^{\frac{\beta}{2}\tau}\widetilde{E}_w(\tau)d\tau+ \frac{2C_{R}}{\alpha}e^{\frac{\beta}{2}t}.
			\end{aligned}
		\end{eqnarray}
		Using that $d(\cdot,w)\in L^1(s,t)$ and \eqref{reg-H2-33}, it follows from \eqref{reg-H2-6} that
		\begin{eqnarray}\label{reg-H2-7}
			\begin{aligned}
				||W||^2_{\mathcal{H}_1}e^{\frac{\beta}{2}t}\le 4e^{\|d\|_{L^1}}\left[\, e^{\frac{\beta}{2}s}\widetilde{E}_W(s)+\frac{ 2C_{R}}{\beta}e^{\frac{\beta}{2}t}\right].
			\end{aligned}
		\end{eqnarray}
		Note that, from \eqref{est-8} [regularity of steady states], we have
		\begin{align}\label{reg-H2-8}\widetilde{E}_W(s)\le C\|\Delta u^*\|^2+\frac{1}{2}\|h\|^2\le C\varrho+\frac{1}{2}\|h\|^2=:\varrho_0.\end{align}
		Thus, taking the limit in \eqref{reg-H2-7} with $s\to -\infty$, we obtain
		
		\begin{eqnarray}\label{reg-H2-9}
			\begin{aligned}
				||W||^2_{\mathcal{H}_1}\le \frac{8C_{R} e^{\|d\|_{L^1(s,t)}}}{\beta}.
			\end{aligned}
		\end{eqnarray}
		As for \eqref{reg-H2-8}, $\widetilde{E}_W(s)\le \varrho_0$ independent on $s$, we conclude from \eqref{reg-H2-9} that the full trajectory $\{W(t)=(w(t),w_t(t)):t\in \mathbb{R}\}$ has $\mathcal{H}_1$- regularity.
		
		\bigskip
		\paragraph{\it{Estimate 2}.}
		Our goal now is to show that the original trajectory  $U(t)$ coincides with $W(t)$ for $t\in (-\infty, T_0]$ for some $T_0<0$. In fact, multiplying \eqref{prob-H2-sol-2} by $z_t$ and integrating over $\Omega\times[s,t]$, we have
		\begin{eqnarray}\label{est-z-1}
			\begin{aligned}
				&E_Z(t)-\int_s^t\int_{\Omega}\left[\,\|\nabla u_t\|^{\alpha}\Delta u_t-\|\nabla w_t\|^{\alpha}\Delta w_t\,\right]z_tdxd\tau=E_z(s)\\
				&\quad+\underbrace{K_f\int_s^t\int_{\Omega}z_tzdxd\tau}_{\mathrm{J}_1}+\underbrace{\frac{1}{2}\int_s^t\int_{\Omega}\int_0^1f''(\theta u+(1-\theta)w)[u_t+(1-\theta)w_t]d\theta |z|^2dxd\tau}_{\mathrm{J}_2}.
			\end{aligned}
		\end{eqnarray}
		where
		$$E_Z(t)=\frac{1}{2}\|z_t\|^2+\frac{1}{2}\|\nabla z\|^2+\frac{1}{2}\int_{\Omega}\int_0^1f'(\theta u+(1-\theta)w)d\theta|z|^2dx+\frac{K_f}{2}\|z\|^2.$$
		
		Note that
		\begin{align*}
			&-\int_s^t\int_{\Omega}
			\left(\|\nabla u_t\|^{\alpha}\Delta u_t
			-\|\nabla w_t\|^{\alpha}\Delta w_t\right)
			z_t\,dx\,d\tau\\
			&=
			-\frac12\int_s^t
			\left(\|\nabla u_t\|^{\alpha}
			+\|\nabla w_t\|^{\alpha}\right)
			\int_{\Omega}\Delta z_t\,z_t\,dx\,d\tau\\
			&\qquad
			-\frac12\int_s^t
			\left(\|\nabla u_t\|^{\alpha}
			-\|\nabla w_t\|^{\alpha}\right)
			\int_{\Omega}\Delta(u_t+w_t)z_t\,dx\,d\tau
			\\
			&=
			\frac12\int_s^t
			\left(\|\nabla u_t\|^{\alpha}
			+\|\nabla w_t\|^{\alpha}\right)
			\|\nabla z_t\|^2\,d\tau\\
			&\qquad
			+\frac12\int_s^t
			\left(\|\nabla u_t\|^{\alpha}
			-\|\nabla w_t\|^{\alpha}\right)
			\left(\|\nabla u_t\|^2
			-\|\nabla w_t\|^2\right)\,d\tau
			\ge 0.
		\end{align*}
		Returning to \eqref{est-z-1}, we obtain
		\begin{eqnarray}\label{est-z-11}
			\begin{aligned}
				E_Z(t)\le E_Z(s)+\mathrm{J}_1+\mathrm{J}_2.
			\end{aligned}
		\end{eqnarray}
		By Poincar\'e's inequality, we have
		\begin{align*}
			\left|\mathrm{J}_1\right|
			\le
			\frac{K_f}{\lambda_1^{1/2}}
			\int_s^tE_Z(\tau)\,d\tau.
		\end{align*}
		Moreover, by H\"older's inequality with
		$\frac{2}{3}+\frac{1}{6}+\frac{1}{6}=1$,
		the embedding
		$H^1_0(\Omega)\hookrightarrow L^6(\Omega)$,
		and the uniform estimate \eqref{unif-bound-H}, we have
		\begin{align*}
			\left|\mathrm{J}_2\right|
			\le
			C_R\int_s^tE_Z(\tau)\,d\tau.
		\end{align*}
		Substituting the above estimates into \eqref{est-z-11}, we obtain
		\begin{align}\label{est-z-4}
			E_Z(t)\le E_Z(s)+C'_R\int_s^tE_Z(\tau)\,d\tau.
		\end{align}
		Applying Gronwall's lemma to \eqref{est-z-4}, we obtain
		\begin{align}\label{est-z-44}
			E_Z(t)\le e^{C'_R(t-s)}E_Z(s).
		\end{align}
		Taking $t=s+r$, with $r>0$, it follows from \eqref{est-z-44} that
		\begin{align}\label{est-z-5}
			E_Z(s+r)\le e^{C'_Rr}E_Z(s).
		\end{align}
		Since
		\begin{align*}
			\frac12\|Z(t)\|_{\mathcal H}^2
			\le
			E_Z(t)
			\le
			C\|Z(t)\|_{\mathcal H}^2,
		\end{align*}
		we deduce from \eqref{est-z-5} that
		\begin{align*}
			\|Z(s+r)\|_{\mathcal H}^2
			\le
			Ce^{C'_Rr}\|Z(s)\|_{\mathcal H}^2.
		\end{align*}
		Since $Z(s)\to0$ in $\mathcal H$ as $s\to-\infty$, we conclude that
		\begin{align*}
			\|Z(s+r)\|_{\mathcal H}\longrightarrow0,
			\qquad s\to-\infty,
		\end{align*}
		for every fixed $r>0$.
		Hence, for every $\varepsilon>0$, there exists $T_\varepsilon<0$ such that
		\begin{align*}
			\|Z(t)\|_{\mathcal H}<\varepsilon,
			\qquad \forall\,t\le T_\varepsilon.
		\end{align*}
		
		\subsection*{Step 3: Forward propagation of the regularity}
		
		Let us now consider problem \eqref{P} with initial data
		$U(T_0)\in \mathcal{H}_1$ for some $T_0<0$, as obtained in Step 2. Namely,
		\begin{eqnarray*}\left\{\begin{array}{l}
				u_{tt} -\Delta u -\|\nabla u_t\|^{\alpha}\Delta u_t + f(u) = h,\quad\mbox{in}\quad\Omega \times (T_0,\infty),\\
				u=0,\quad \mbox{on}\quad\Gamma,\\
				(u(T_0),u_t(T_0))=(u^*_0,u^*_1)\in \mathcal{H}_1.
			\end{array}\right.\end{eqnarray*}
		Proceeding exactly as in Estimate 1 of Step 2, we obtain the following inequality
		(analogous to \eqref{reg-H2-7})
		\begin{align*}
			||U(t)||^2_{\mathcal{H}_1}
			\leq
			4e^{\|g\|_{L^1(T_0,t)}}e^{-\varpi(t-T_0)}
			\widetilde{E}_{U}(T_0)
			+
			4e^{\|g\|_{L^1(T_0,t)}}K_{R},
			\qquad \forall t\geq T_0,
		\end{align*}
		where $\varpi>0$,
		$g(t,u)=C\|\nabla u_t(t)\|^{\alpha+2}$, and
		\begin{align*}
			\widetilde{E}_{U}(T_0)
			\leq C\|U(T_0)\|_{\mathcal H_1}^2<\infty.
		\end{align*}
		Since $d(\cdot,u)\in L^1(T_0,\infty)$, the factor
		$e^{\|d\|_{L^1(T_0,t)}}$ remains uniformly bounded for all $t\geq T_0$.
		Therefore, the above estimate implies that
		\begin{align*}
			\sup_{t\geq T_0}||U(t)||^2_{\mathcal{H}_1}<\infty.
		\end{align*}
		Consequently, $U$ is uniformly bounded in $\mathcal{H}_1$ for all
		$t\in[T_0,\infty)$. Combining Steps 2 and 3, we conclude that
		\begin{align*}
			U\in L^{\infty}(\mathbb R;\mathcal{H}_1),
		\end{align*}
		and there exists $R_1>0$ such that
		\begin{align*}
			||U(t)||^2_{\mathcal{H}_1}\leq R_1,
			\qquad \forall t\in\mathbb R.
		\end{align*}
		This proves \eqref{regularity-H1} and, consequently, establishes the smoothness of the global attractor $\mathcal{A}_{\alpha}$. The proof of Theorem~\ref{theo-regularity} is complete.
	\end{proof}

	\begin{remark}\label{rem-utt}
		By using equation~\eqref{P} and the uniform estimate in
		Theorem~\ref{theo-regularity}, we also obtain
		\[
		\sup_{\alpha\ge0}
		\|u_{tt}^{\alpha}\|_{L^\infty(-T,T;H^{-1}(\Omega))}
		\le C_T,
		\qquad \forall\,T>0,
		\]
		where $C_T$ is independent of $\alpha$.
	\end{remark}

	\section{Upper Semicontinuity}
	To conclude the paper, we establish the upper semicontinuity of the family of global attractors
	$\{\mathcal{A}_{\alpha}\}_{\alpha\ge0}$
	at $\alpha=0$ in the phase space
	$\mathcal{H}=H_0^1(\Omega)\times L^2(\Omega)$.
	This result shows that the global attractors associated with the wave equation endowed with nonlinear nonlocal Kelvin--Voigt damping (the case $\alpha>0$) converge, as $\alpha\to0$, to the global attractor corresponding to the wave equation with linear Kelvin--Voigt damping (the case $\alpha=0$).

	Since we are interested in the upper semicontinuity of the global attractor
	$\mathcal{A}_{\alpha}$ at $\alpha=0$, we restrict the parameter to
	$\Lambda=[0,1]$ without loss of generality.

	The upper semicontinuity of the family
	$\{\mathcal{A}_{\alpha}\}_{\alpha\in\Lambda}$ at $\alpha=0$ can now be
	stated as follows.
	
	\begin{theorem}{\bf[Upper semicontinuity]}\label{upper-semicontinuity}
		Let Assumption~\ref{Assumption} hold. Then the family of global attractors
		$\{\mathcal{A}_{\alpha}\}_{\alpha\in\Lambda}$ associated with problem~\eqref{P}
		in the phase space $\mathcal{H}$ is upper semicontinuous at $\alpha=0$, in the sense that
		\begin{align*}
			\lim_{\alpha\to0}
			\sup_{U\in\mathcal{A}_{\alpha}}
			\operatorname{dist}_{\mathcal{H}}(U,\mathcal{A}_{0})
			=0.
		\end{align*}
	\end{theorem}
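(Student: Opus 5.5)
The argument will be by contradiction, using compactness of complete trajectories supplied by the uniform regularity of Theorem~\ref{theo-regularity}. Suppose the conclusion fails. Then there exist $\delta>0$, a sequence $\alpha_n\to0$ in $\Lambda$, and points $U_n\in\mathcal A_{\alpha_n}$ with $\operatorname{dist}_{\mathcal H}(U_n,\mathcal A_0)\ge\delta$.

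Since $\mathcal A_{\alpha_n}$ is invariant, each $U_n$ lies on a complete trajectory $U^n(t)=(u^n(t),u^n_t(t))$, $t\in\mathbb R$, contained in $\mathcal A_{\alpha_n}$, with $U^n(0)=U_n$. By Proposition~\ref{Prop-absorbing-set} and Theorem~\ref{theo-regularity}, these trajectories satisfy
\[
\sup_n\sup_{t\in\mathbb R}\|U^n(t)\|_{\mathcal H_1}\le R_1,
\]
with $R_1$ independent of $\alpha$. Remark~\ref{rem-utt} adds a bound on $u^n_{tt}$ in $L^\infty(-T,T;H^{-1}(\Omega))$ that is uniform in $n$ for every $T>0$.

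We then apply the Aubin--Lions/Arzel\`a--Ascoli argument, as in the passage to the limit in Theorem~\ref{theo-global-solution}, using the compact embeddings $H^2\cap H_0^1\hookrightarrow\hookrightarrow H_0^1$ and $H_0^1\hookrightarrow\hookrightarrow L^2$. Combined with a diagonal argument over $T\in\mathbb N$, this yields a subsequence and a limit $U=(u,u_t)$ with $U^n\to U$ in $C([-T,T];\mathcal H)$ for every $T$. In addition, $u^n_t\rightharpoonup^* u_t$ in $L^\infty(-T,T;H_0^1)$ and $\sup_{\mathbb R}\|U(t)\|_{\mathcal H_1}\le R_1$.

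Next we show that $U$ is a complete weak solution of the limit problem \eqref{strongly-damped}, that is, the case $\alpha=0$. The linear terms and $f(u^n)\to f(u)$ pass to the limit exactly as in the proof of Theorem~\ref{theo-global-solution}-(II). The delicate term is $\|\nabla u^n_t\|^{\alpha_n}\Delta u^n_t$, which should converge to $\Delta u_t$. Write
\[
\|\nabla u^n_t\|^{\alpha_n}\nabla u^n_t-\nabla u_t=\bigl(\|\nabla u^n_t\|^{\alpha_n}-1\bigr)\nabla u^n_t+(\nabla u^n_t-\nabla u_t).
\]
The second piece tends to $0$ weakly. For the first piece, use $\|\nabla u^n_t\|\le R_1$. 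If $\|\nabla u^n_t\|$ were also bounded below, then $\|\nabla u^n_t\|^{\alpha_n}\to1$ uniformly. It is not, so the following bound is needed: for $0\le x\le R_1$ and $\alpha_n\le 1$, $|x^{\alpha_n}-1|\,x\le \sup_{0\le x\le R_1}|x^{1+\alpha_n}-x|$, and this supremum tends to $0$ as $\alpha_n\to0$ because $x^{1+\alpha}\to x$ uniformly on compact sets. Hence $(\|\nabla u^n_t\|^{\alpha_n}-1)\nabla u^n_t\to0$ strongly in $L^\infty(-T,T;L^2)$, and the weak formulation \eqref{variational-formula} passes to the limit with $\alpha=0$.

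I expect the main obstacle to be this passage to the limit in the damping term. The uniform $\mathcal H_1$ bound and the elementary uniform estimate above are what make it work; the monotonicity and Minty arguments would otherwise be needed. Once it is done, uniqueness from Theorem~\ref{theo-global-solution} shows that $U(t)=S^0(t-s)U(s)$ for all $s\le t$. Thus $\{U(t):t\in\mathbb R\}$ is a bounded complete trajectory of $(\mathcal H,S^0(t))$. Since the global attractor consists of all bounded complete trajectories, as follows from uniform attraction of the bounded set $\{U(t)\}$ together with invariance, we get $U(0)\in\mathcal A_0$.

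Finally, $U_n=U^n(0)\to U(0)$ in $\mathcal H$, so $\operatorname{dist}_{\mathcal H}(U_n,\mathcal A_0)\to0$. This contradicts $\operatorname{dist}_{\mathcal H}(U_n,\mathcal A_0)\ge\delta$ and proves the theorem.
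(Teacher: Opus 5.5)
Your proposal is correct and follows the paper's proof essentially step by step. Both arguments argue by contradiction, use complete trajectories with the $\alpha$-independent $\mathcal H_1$ and $u_{tt}$ bounds, apply Aubin--Lions compactness, and pass to a bounded complete trajectory of the $\alpha=0$ problem lying in $\mathcal A_0$. You also supply details the paper only asserts: the diagonal argument over $T$, the convergence of the damping term via $\|\nabla u^n_t\|\le R_1$, and the justification that the limit is a trajectory of $S^0(t)$ contained in $\mathcal A_0$.
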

	\begin{proof}
		Suppose, by contradiction, that the upper semicontinuity property does not
		hold. Then there exist $\delta>0$, a sequence $\alpha_n\to0$, and
		$U_n\in\mathcal A_{\alpha_n}$ such that
		\begin{align}\label{contradiction}
			\operatorname{dist}_{\mathcal H}(U_n,\mathcal A_0)\ge\delta,
			\qquad n\in \mathbb{N}.
		\end{align}
		For each $n$, let
		$\{U_n(t)=(u_n(t),u_{n,t}(t)):t\in\mathbb R\}$, 
		be a complete trajectory on $\mathcal A_{\alpha_n}$ such that
		$U_n(0)=U_n$. By Theorem~\ref{theo-regularity},
		\begin{align}\label{uniform-H1}
			\sup_{n\ge1}\sup_{t\in\mathbb R}
			\|U_n(t)\|_{\mathcal H_1}\le R_1,
		\end{align}
		where $R_1$ is independent of $n$. Moreover, by Remark~\ref{rem-utt},
		the corresponding time derivatives are uniformly bounded in
		$H^{-1}(\Omega)$. Therefore, by the Aubin--Lions compactness theorem, for every $T>0$,
		there exist a subsequence, still denoted by $\{U_n\}$, and a function
		$
		U(t)=(u(t),u_t(t))\in C_{\mathrm{bnd}}(\mathbb R;\mathcal H)
		$
		such that
		\begin{align}\label{compact-convergence}
			\max_{t\in[-T,T]}
			||U_n(t)-U(t)||_{\mathcal H}\longrightarrow0
			\qquad\text{as }n\to\infty.
		\end{align}
		Using the variational formulation~\eqref{variational-formula} of
		problem~\eqref{P}, together with $\alpha_n\to0$ and the uniform estimates
		above, we can pass to the limit and conclude that $U(t)$ is a complete
		trajectory of the limiting problem corresponding to $\alpha=0$. Furthermore, by~\eqref{uniform-H1},
		$$
		\|U(t)\|_{\mathcal H}\le R,
		\qquad t\in\mathbb R,
		$$
		for some $R>0$ independent of $t$. Hence, $U(t)$ is a bounded complete
		trajectory of the limiting system. Consequently,
		$
		U(t)\in\mathcal A_0,$ $t\in\mathbb R.
		$
		In particular,
		$
		U(0)\in\mathcal A_0.
		$
		Taking $t=0$ in~\eqref{compact-convergence}, we obtain
		\[
		U_n=U_n(0)\longrightarrow U(0)
		\qquad\text{in }\mathcal H.
		\]
		Since $U(0)\in\mathcal A_0$, this contradicts~\eqref{contradiction}.
		Therefore,
		\[
		\lim_{\alpha\to0}
		\operatorname{dist}_{\mathcal H}(\mathcal A_\alpha,\mathcal A_0)=0.
		\]
		This completes the proof.
	\end{proof}
	
	\section*{Declarations}
	
	\noindent\textbf{Conflict of Interest.}
	On behalf of all authors, the corresponding author declares that there is
	no conflict of interest.
	
	\medskip
	
	\noindent\textbf{Author Contributions.}
	All authors contributed to the conception and design of the study.
	
	\medskip
	
	\noindent\textbf{Data Availability.}
	Data sharing is not applicable to this article, as no datasets were
	generated or analyzed during the current study.

\end{document}